\documentclass[12pt]{article}

\usepackage{fullpage}
\usepackage{authblk}

\usepackage[square]{natbib}
\RequirePackage[colorlinks,citecolor=blue,urlcolor=blue]{hyperref}
\usepackage{mathrsfs,amsthm,amsmath}
\usepackage{amssymb,multirow}
\usepackage{amsfonts}
\usepackage{stmaryrd}
\usepackage{dsfont}
\usepackage[ruled,vlined]{algorithm2e}

\usepackage{ucs}
\usepackage[toc]{appendix}
\usepackage[utf8x]{inputenc}
\usepackage{amsmath}
\usepackage{amsfonts}
\usepackage{amssymb}
\usepackage{amsthm}
\usepackage{fontenc}
\usepackage{graphicx}

\usepackage{bbm}
\usepackage{bm}
\RequirePackage[OT1]{fontenc}
\usepackage[usenames,dvipsnames]{color}



\newcommand{\E}{\mathds{E}}

\renewcommand{\P}{\mathds{P}}



\newcommand{\ddr}{\mathrm{d}}

\def\simiid{\stackrel{\mbox{\scriptsize{iid}}}{\sim}}
\theoremstyle{plain}
\newtheorem{thm}{\textsc{Theorem}}

\newtheorem{lem}{\textsc{Lemma}}
\newtheorem{cor}{\textsc{Corollary}}
\newtheorem{assumption}{\textsc{Assumption}}
\newtheorem{rmk}{Remark}
\newtheorem{prp}{\textsc{Proposition}}


\newcommand{\Reals}{\mathbb{R}}
\newcommand{\EE}{\mathbb{E}}
\newcommand{\PP}{\mathbb{P}}
\newcommand{\QQ}{\mathbb{Q}}
\newcommand{\var}{\mathrm{var}}
\newcommand{\1}{\bm{1}}
\newcommand{\intd}{\mathrm{d}}
\newcommand{\gammaDist}{\mathrm{Gamma}}
\DeclareMathOperator*{\argmax}{arg\,max}

\allowdisplaybreaks

\begin{document}

\title{Bayesian Nonparametric Inference for ``Species-sampling" Problems}
\date{}

\author{Cecilia Balocchi\thanks{Cecilia Balocchi is Assistant Professor, School of Mathematics, University of Edinburgh, Edinburgh, United Kingdom (e-mail: \href{mailto:cecilia.balocchi@ed.ac.uk}{cecilia.balocchi@ed.ac.uk}).},
Stefano Favaro\thanks{Stefano Favaro is Professor, Department of Economics and Statistics, University of Torino and Collegio Carlo Alberto, Torino, Italy (e-mail:
\href{mailto:stefano.favaro@unito.it}{stefano.favaro@unito.it}) and is also affiliated to IMATI-CNR ``Enrico  Magenes" (Milan, Italy).},
Zacharie Naulet\thanks{Zacharie Naulet is Assistant Professor, Department of Mathematics, Universit\'e Paris-Saclay, Orsay, France (e-mail: \href{mailto:zacharie.naulet@universite-paris-saclay.fr}{zacharie.naulet@universite-paris-saclay.fr}).}
}






\maketitle
  \vspace{-1.2cm}

\bigskip

\begin{abstract} 
Given an observed sample from a population of individuals belonging to species, ``species-sampling'' problems (SSPs) call for estimating some features of the unknown species composition of additional unobservable samples from the same population. Within SSPs, the problems of estimating coverage probabilities, the number of unseen species and coverages of prevalences have emerged in the past three decades for being the subject of numerous methodological and applied works, mostly in biological sciences but also in statistical machine learning, electrical engineering, theoretical computer science, information theory and forensic statistics. In this paper, we focus on these popular SSPs, and present an overview of their Bayesian nonparametric (BNP) analysis under the Pitman--Yor process (PYP) prior. While reviewing the literature, we improve on computation and interpretability of existing posterior inferences, typically expressed through complicated combinatorial numbers, by establishing novel posterior representations in terms of simple compound Binomial and Hypergeometric distributions. We also consider the  problem of estimating the discount and scale parameters  of the PYP prior, showing a property of Bayesian consistency with respect to estimation through the hierarchical Bayes and empirical Bayes approaches, that is: the discount parameter can be estimated consistently, whereas the scale parameter cannot be estimated consistently, thus advising caution in posterior inference. We conclude our work by discussing some generalizations of SSPs, mostly in the field of biological sciences, which deal with ``feature-sampling", multiple populations of individuals sharing species and classes of Markov chains.
\end{abstract}

\noindent \textbf{Keywords}: Bayesian nonparametrics, Bayesian consistency, Coverage of prevalences, Coverage probabilities, Empirical Bayes, Hierarchical Bayes, Pitman--Yor process prior, ``Species-sampling" problems, Unseen species.


\section{Introduction}

The estimation of the number of unseen species is a classical problem in statistics, dating back to the seminal work of \citet{Fis(43)}. Consider a generic population of individuals, such that each individual takes a value in a (possibly infinite) space of species' labels or symbols. Assuming $n\geq1$ observed individuals to be modeled as a random sample $(X_{1},\ldots,X_{n})$ from an unknown discrete distribution $p$, the unseen-species problem calls for estimating
\begin{displaymath}
\mathfrak{u}_{n,m}=|\{X_{n+1},\ldots,X_{n+m}\}\setminus \{X_{1},\ldots,X_{n}\}|,
\end{displaymath}
namely the number of hitherto unseen (distinct) species that would be observed if $m\geq1$ additional samples $(X_{n+1},\ldots,X_{n+m})$ were collected from the same distribution. The unseen-species problem may be viewed as the $m$-step ahead generalization of the problem of estimating the missing mass, namely the probability of discovering at the $(n+1)$-th draw a species not observed in the sample $(X_{1},\ldots,X_{n})$ \citep{Goo(49),Goo(53)}. One may consider several refinements or generalizations of the unseen-species problem, by defining suitable discrete functionals of the $X_{i}$'s according to the features of interest on the unknown species’ composition of unobserved samples \citep{Goo(56),Efr(76)}. We refer to these problems as ``species-sampling" problems (SSPs), though SSPs may also refer to a broader class of statistical problems that deal with sampling from generic populations of species. SSPs first appeared in ecology for the estimation of the species richness or diversity of ecological populations, and their importance has grown in the most recent years driven by applications in biological and physical sciences, statistical machine learning, electrical engineering, theoretical computer science, information theory and forensic statistics.

Biological sciences are the field where SSPs have been most investigated over the past three decades, raising several challenges in both methods and applications. This is testified by the work of \citet{Den(19)}, which shows how large scale genomic data provide a fertile ground for SSPs. Although sequencing technologies have advanced the understanding of genome biology, observed samples may not be perfectly representative of the molecular heterogeneity or species composition of the underlying DNA library, often providing a poor representation due to low-abundance molecules that are hard to sample. Due to the impossibility of sequencing DNA libraries up to complete saturation, it is common to make use of the observed samples, typically collected under suitable budget constraints, to infer the molecular heterogeneity of additional unobserved samples from the library, as well as of the library itself. \citet{Den(19)} identified three major questions of interest:
\begin{itemize}
\item[Q1)] what is the expected population frequency of a species with frequency $r\geq1$ in the sample?
\item[Q2)] how many previously unobserved species in the sample will be observed in additional samples?
\item[Q3)] how many species with frequency $r\geq1$ in the sample will be observed in additional samples?
\end{itemize}
These are popular examples of SSPs, with Q2) being the unseen-species problem, and they all apply to statistical  analysis related to the study of sequencing complexity \citep{Dal(13)}, design of sequencing experiments \citep{Sim(14)} and estimation of genetic diversity \citep{Gao(07)}, immune receptor diversity \citep{Rob(09)} and genetic variation \citep{Ion(09)}.

Although nonparametric estimation of the number of unseen-species dates back to the 1950s, only recent works have set forth a rigorous and comprehensive treatment of such a problem \citep{Orl(17),Wu(19),Pol(20)}. Besides providing estimators of $\mathfrak{u}_{n,m}$ with provable (theoretical) guarantees, these works have introduced novel tools that pave the way to investigate other SSPs \citep{Wu(21)}. In general, nonparametric estimation of SSPs does not rely on any assumption on the underlying distribution $p$ of the $X_{i}$'s, with provable guarantees that hold uniformly over all discrete distributions. This assumption-free framework has led to develop solid theories in their greatest generality, though worst case distributions may severely hamper the empirical performance of the proposed estimators, leading to unreliable results in applications. It is therefore useful to take into account any knowledge about the nature of data, which typically results in placing assumptions on the tail behaviour of the distribution $p$. That is, the assumption-free framework of SSPs may be usefully complemented through suitable prior assumptions of regularity on the tail behaviour of $p$.  A common and flexible tail assumption is that of regular variation, which allows for $p$ to range from a geometric tail to an heavy power-law tail behaviour \citep{Gne(07)}. This is well-motivated by the ubiquitous power-law type distributions, which occur in many situations of scientific interest, and nowadays have significant consequences for the understanding of numerous natural and social phenomena \citep{Cla(09)}.

A Bayesian nonparametric (BNP) approach to SSPs has been set forth in \citet{Lij(07)}, and it relies on specifying a prior on the distribution $p$ of the $X_{i}$'s. This is arguably the most natural approach to complement the assumption-free framework of SSPs. In this respect, discrete random probability measures in the form of species sampling models \citep{Pit(96)} provide a broad class of nonparametric priors for $p$ \citep[Chapter 3 and Chapter 4]{Pit(06)}. Among species sampling models, \citet{Lij(07)} focussed on the Pitman--Yor process (PYP) prior \citep{Per(92),Pit(97)}, whose mathematical tractability and interpretability make it the natural candidate in applications \citep{Fav(09),Fav(12)}. The PYP prior is indexed by a discount parameter $\alpha\in[0,1)$ and a scale parameter $\theta>-\alpha$, such that for $\alpha=0$ it reduces to the Dirichlet process (DP) of \citet{Fer(73)}. Of special interest is the parameter $\alpha$, as it admits a clear interpretation in terms of controlling the tail behaviour of the PYP prior, which ranges from geometric tails to heavy power-law tails. In particular, the larger $\alpha$ the heavier the tail of the PYP prior, and as a limiting case for $\alpha\rightarrow0$ one recovers the geometric tail behaviour of the DP prior \citep[Chapter 3]{Pit(06)}. Such a peculiar parameterization makes the PYP a flexible prior choice, which allows for tuning of the tail behaviour of the prior with respect to the empirical distribution of the data. Among species sampling models, the PYP prior is certainly unique with respect to mathematical tractability, flexibility and interpretability \citep{Deb(15),Bac(17)}. 

\subsection{Our contributions}

In this paper, we present an overview of BNP inference for SSPs under the PYP prior. We focus on SSPs corresponding to the aforementioned questions Q1, Q2 and Q3. As for Q1, we consider the problem of estimating coverage probabilities, which include the missing mass and the coverage probability of order $r\geq1$, namely the probability mass of species observed with frequency $r$ in the sample. As for Q2, we consider the unseen-species problem and a generalization of it defined in terms of the estimation of the number of hitherto unseen species that would be observed with frequency $r\geq1$ in $m$ additional samples, here referred to as the unseen species' prevalences of order $r$. Finally, as for Q3, we consider the problem of estimating the coverage of prevalence of order $r\geq1$, namely the number of species with frequency $r$ in the sample that would be observed in $m$ additional samples. We introduce each SSP in the assumption-free framework, and then we present its analysis in the BNP framework under the PYP prior. While reviewing posterior analyses of SSPs from the recent BNP literature, we establish some novel representations of posterior distributions. In particular, we show that posterior distributions that are typically expressed in terms of complicated combinatorial numbers, which hamper both the computation and the interpretability of posterior inferences, admit straightforward representations in terms of compound Binomial and Hypergeometric distributions. This is a step forward in the BNP approach to SSPs, especially with respect to its use in problems of practical interest, contributing to simplify and make more interpretable the posterior inferences.

Critical in the BNP approach to SSPs under the PYP prior is the estimation of the prior's parameters $(\alpha,\theta)$. Two approaches for estimating $(\alpha,\theta)$ are: i) the hierarchical Bayes or fully Bayes approach, which relies on the posterior distribution of $(\alpha,\theta)$ with respect to a suitable prior specification; ii) the empirical Bayes approach, which relies on maximizing, with respect to $(\alpha,\theta)$, the (marginal) likelihood of the observed sample. The empirical Bayes approach is the most used in practice \citep{Lij(07),Fav(09),Deb(15)}, though no provable guarantees have been established for empirical Bayes estimates of $(\alpha,\theta)$. The lack of a theoretical understanding of the hierarchical Bayes approach and the empirical Bayes approach has precluded clear guidelines for choosing among them. Here, we investigate their properties of Bayesian consistency. Under moderate misspecification, we find that the empirical Bayes estimator of $(\alpha,\theta)$ converges to a limit that is interpretable in terms of the ``true" data generating mechanism. In particular, when the model is correct, we find that: i) both the empirical Bayes estimator and the hierarchical Bayes estimator of $\alpha$ are consistent; ii) $\theta$ can not be tested or estimated consistently, because of a curious anti-concentration result. Under the hierarchical Bayes approach, we characterize the large sample asymptotic behavior of the posterior distribution of the parameter $(\alpha,\theta)$; we find that the limiting posterior distribution of $\theta$ depends on the prior, thus particular caution should be used. As for $\alpha$, we prove a weak form of the Bernstein-von Mises theorem, finding its contraction rates.

\subsection{Organization of the paper}

The paper is structured as follows. In Section \ref{sec1} we introduce the BNP framework for SSPs under the PYP prior, and review the PYP, with emphasis on the interpretation of the prior's parameters $(\alpha,\theta)$. BNP inference for SSPs is presented in Section \ref{sec2} for the coverage probabilities, in Section \ref{sec3} for the number of unseen species, and in Section \ref{sec4} for the coverage of prevalences. In Section \ref{sec6} we consider the estimation of $(\alpha,\theta)$, showing properties of Bayesian consistency with respect to the hierarchical Bayes and empirical Bayes approaches. In Section \ref{sec7} we present some numerical illustrations for the estimation of $(\alpha,\theta)$ and for the estimation of the unseen, using synthetic data. Section \ref{sec8} contains a discussion on some emerging generalizations of SSPs, mostly in the field of biological and physical sciences, which deal with ``feature-sampling" problems, multiple populations of individuals sharing species and classes of Markov chains. Additional material, technical results, and proofs of the main results are deferred to the Supplementary Material \citep{BFN(23)}.


\section{The BNP species sampling framework }\label{sec1}

The assumption-free framework for SSPs, henceforth referred to as the ``classical framework", assumes that $n\geq1$ observed samples from the population are modeled as a random sample $\mathbf{X}_{n}=(X_{1},\ldots,X_{n})$ from an unknown discrete distribution $p$, i.e. $p=\sum_{j\geq1}p_{j}\delta_{s_{j}}$ with $p_{j}$ being the probability of  species' label $s_{j}$ for $j\geq1$, such that $\sum_{j\geq1}p_{j}=1$. Here, following \citet{Lij(07)}, we consider to endow $p$ with the PYP prior, namely we assume that
\begin{align}\label{eq:exchangeable_model}
X_i\,|\,P & \quad\simiid\quad P \qquad i=1,\ldots,n,\\[0.2cm]
\notag P& \quad\sim\quad\text{PYP}(\alpha,\theta),
\end{align}
with $\text{PYP}(\alpha,\theta)$ being the law of the PYP process with parameter $(\alpha,\theta)$. We refer to \eqref{eq:exchangeable_model} as the ``BNP framework" for SSPs. A simple and intuitive definition of the PYP follows from its stick-breaking construction \citep{Pit(95)}. For $\alpha\in[0,1)$ and $\theta>-\alpha$ let: i) $(V_{i})_{i\geq1}$ be independent r.v.s such that $V_{i}$ has  Beta distribution with parameter $(1-\alpha,\theta+i\alpha)$, for $i\geq1$; ii) let $(S_{j})_{j\geq1}$ be r.v.s, independent of the $V_{i}$'s, and i.i.d. as a non-atomic distribution $\nu$ on a measurable space $\mathbb{S}$. If we set $P_{1}=V_{1}$ and $P_{j}=V_{j}\prod_{1\leq i\leq j-1}(1-V_{i})$ for $j\geq1$, such that $P_{j}\in(0,1)$ for any $j\geq1$ and $\sum_{j\geq1}P_{j}=1$ almost surely, then the random probability measure $P=\sum_{j\geq1}P_{j}\delta_{S_{j}}$ is a PYP on $\mathbb{S}$ with base distribution $\nu$, discount $\alpha$ and scale $\theta$. The PYP generalizes the DP by means of the parameter $\alpha\in[0,1)$, which controls the tail behaviour of $P$. For any $\alpha\in(0,1)$ let $P\sim\text{PYP}(\alpha,\theta)$ and let $(P_{(j)})_{j\geq1}$ be the decreasingly ordered $P_{j}$'s. Then, as $j\rightarrow+\infty$ the $P_{(j)}$'s follow a power-law distribution of exponent $c=\alpha^{-1}$ \citep{Pit(97)}; that is, $\alpha\in(0,1)$ controls the tail behaviour of the PYP through the small $P_{(j)}$'s: the larger $\alpha$ the heavier the tail of $P$. As a limiting case for $\alpha\rightarrow0$, the DP features geometric tails \citep[Chapter 3 and Chapter 4]{Pit(06)}.

\subsection{Sampling formulae for the PYP prior}\label{sec11}

The random sample $\mathbf{X}_{n}$ from $P\sim\text{PYP}(\alpha,\theta)$ is regarded as part of the exchangeable sequence $(X_{n})_{n\geq1}$ satisfying \eqref{eq:exchangeable_model}, that is a sequence whose directing (de Finetti) measure is the law of the PYP. Because of the almost sure discreteness of $P$, $\mathbf{X}_{n}$ features $K_{n}\leq n$ distinct species, labelled by $\{S^{\ast}_{1},\ldots,S^{\ast}_{K_{n}}\}$, with frequencies $\mathbf{N}_{n}=(N_{1,n},\ldots,N_{K_{n},n})$ such that $N_{i}\geq1$, for $i=1,\ldots,k$, and $\sum_{1\leq i\leq k}N_{i}=n$. In other terms, $\mathbf{X}_{n}$ induces a random partition $\Pi_{n}$ of $\{1,\ldots,n\}$ whose blocks are the classes induced by the equivalence relation $i\sim j\iff X_{i}=X_{j}$. In particular, $\Pi_{n}$ is an exchangeable random partition, meaning that its distribution is such that the probability of any partition of $\{1,\ldots,n\}$ into $k$ blocks with frequency $\mathbf{n}=(n_{1},\ldots,n_{k})$ is a symmetric function $p_{n,k}$ of $\mathbf{n}$, i.e. 
\begin{equation}\label{eppf_pit}
p_{n,k}(\mathbf{n})=\frac{\left(\frac{\theta}{\alpha}\right)_{(k)}}{(\theta)_{(n)}}\prod_{i=1}^{k}\alpha(1-\alpha)_{(n_{i}-1)},
\end{equation}
where $(a)_{(u)}$ is the $u$-th rising factorial of $a$, i.e. $(a)_{(u)}=\prod_{0\leq i\leq u-1}(a+i)$, for $a\geq0$ and $u\in\mathbb{N}_{0}$. The function $p_{n,k}$ is referred to as the exchangeable partition probability function of $\Pi_{n}$ \citep{Kin(78),Pit(95)}. The sequence $(\Pi_{n})_{n\geq1}$ defines an exchangeable random partition $\Pi$ of $\mathbb{N}$, with exchangeability meaning that the distribution of $\Pi$ is invariant under finite permutations of its elements, provided that $\Pi_{m}$ is the restriction of $\Pi_{n}$ to the first $m$ elements, almost surely for all $m<n$. This implies that 
\begin{align}\label{EPPF_const}
&p_{n,k}(\mathbf{n})=p_{n+1,k+1}(\mathbf{n},1)+\sum_{i=1}^{k}p_{n+1,k}(\mathbf{n}+\mathbf{1}_{i}),
\end{align}
for all $n\geq 1$, with $\mathbf{1}_{i}$ being a vector of length $k$ with $1$ in the position $i$ and $0$ elsewhere \citep{Han(00),Nac(06)}. See \citet[Chapter 2]{Pit(06)} for details on exchangeable random partitions and generalizations thereof.

The construction of $\Pi$, and in particular \eqref{EPPF_const}, implies that its distribution is completely determined by the distribution of $(X_{i})_{i\geq1}$ through the conditional probability of $X_{n+1}$ given $\mathbf{X}_{n}$, i.e. the predictive probability. If $\mathbf{X}_{n}$ is a random sample from $P\sim\text{PYP}(\alpha,\theta)$ as described above, and $\text{Pr}[X_{1}\in\cdot]=\nu(\cdot)$, then the predictive probabilities of $(X_{i})_{i\geq1}$ are
\begin{align}\label{eq:pred}
&\text{Pr}[X_{n+1}\in\cdot\,|\,\mathbf{X}_{n}]=\frac{\theta+k\alpha}{\theta+n}\nu(\cdot)+\sum_{i=1}^{k}\frac{n_{i}-\alpha}{\theta+n}\delta_{S^{\ast}_{i}}(\cdot),
\end{align}
determining the distribution of $\Pi_{n+1}$ from that of $\Pi_{n}$, for $n\geq1$. The probability \eqref{eq:pred} is a linear combination between: i) the probability $(\theta+k\alpha)/(\theta+n)$ that $X_{n+1}$ is a new species, i.e. the probability of creating a new block in the random partition of $\{1,\ldots,n\}$; ii) the probability  $(n_{i}-\alpha)/(\theta+n)$ that $X_{n+1}$ takes value $S^{\ast}_{i}$, i.e. the probability of increasing by $1$ the size of the block $S^{\ast}_{i}$ in the random partition of $\{1,\ldots,n\}$, for $i=1,\ldots,k$. For $\theta>0$, an intuitive description of \eqref{eq:pred} was proposed in \citet[Chapter 11]{Zab(05)}. Consider an urn containing a black ball and colored (non-black) balls, where colored balls may be interpreted as individuals with their associated species (colors). Balls are drawn from the urn, and then returned to the urn, in such a way that the probability of a ball being drawn at any stage is proportional to its weight. Initially the urn contains a black ball with weight $\theta>0$, and at the $n$-th draw: i) if we pick a colored ball then the ball is returned to the urn together with a ball of the same color with weight $1$; ii) if we pick a black ball, then the weight of the black ball is increased by $\alpha\in[0,1)$ and a ball of a new color with weight $1-\alpha$, i.e. any color not present in the urn, is inserted in the urn. If $X_{n}$ is the color of the ball returned in the urn after the $n$-th draw, and such a color is generated as $\nu$, then the conditional distribution of $X_{n+1}$ given $\mathbf{X}_{n}$  is \eqref{eq:pred}.

\citet{Zab(92),Zab(97)} provided a characterization of the PYP through its predictive probability \eqref{eq:pred}, referred to as ``sufficientness postulate" \citep{Joh(32),Bac(17)}. For $\alpha\in(0,1)$ the PYP is characterized as the sole species sampling model whose predictive probability is such that: i) the probability that $X_{n+1}$ is of a new species depends on $\mathbf{X}_{n}$ only through $n$ and $K_{n}$; ii) the probability that $X_{n+1}$ belongs to $S^{\ast}_{i}$ depends on $\mathbf{X}_{n}$ only through $n$ and $N_{n,i}$. In particular, the DP is the sole species sampling model whose predictive probability is such that the probability that $X_{n+1}$ belongs is of a new species depends on $\mathbf{X}_{n}$ only through $n$ \citep{Reg(78),Lo(91)}. The  ``sufficientness postulate" and the P\'olya-like urn scheme show how the parameter $\alpha$ drives a combined effect in terms of a reinforcement mechanism and the increase in the rate at which new species are generated according to \eqref{eq:pred}. A new species $S^{\ast}$ entering in the sample produces two effects: i) it is assigned a mass proportional to $(1-\alpha)$ in the $S^{\ast}$ empirical component of \eqref{eq:pred}; ii) it is assigned a mass proportional to $\alpha$ in the probability of generating a new species. That is, the mass assigned to $S^{\ast}$ is less than proportional to its cluster size, i.e. $1$, and the remaining mass is added to the probability of generating new species. The first effect gives rise to the following reinforcement mechanism: if $S^{\ast}$ is re-observed then the mass of $S^{\ast}$ is increased by $1/(\theta+n+1)$, meaning that the sampling procedure tends to reinforce observed species with higher frequencies. The second effect implies that the probability of generating yet another new species, which overall still decreases in $n$, is increased by $\alpha/(\theta+n+1)$. The larger $\alpha$ the stronger the reinforcement mechanism, and the higher is the probability of generating new species. For $\alpha=0$ everything is proportional to species' frequencies, in such a way that the number of distinct species does not alter the probability of generating new species.

We conclude by recalling the sampling formula or frequency-of-frequency distribution induced by a random sample $\mathbf{X}_{n}$ from $P\sim\text{PYP}(\alpha,\theta)$, which follows from  \eqref{eppf_pit} \citep[Chapter 2]{Pit(06)}. Let $M_{r,n}$ be the number of distinct species with frequency $r$ in $\mathbf{X}_{n}$, for $1\leq r\leq n$, i.e. $M_{r,n}=\sum_{1\leq i\leq K_{n}}I(N_{i,n}=r)$, such that $\sum_{1\leq r\leq n}M_{r,n}=K_{n}$ and $\sum_{1\leq r\leq n}rM_{r,n}=n$. The distribution of $\mathbf{M}_{n}=(M_{1,n},\ldots,M_{n,n})$ is defined on $\mathcal{M}_{n,k}=\{k\in\{1,\ldots,n\}\text{ and }(m_{1},\ldots,m_{n})\text{ : }m_{i}\geq0,\,\sum_{1\leq i\leq n}m_{i}=k,\,\sum_{1\leq i\leq n}im_{i}=n\}$, and referred to as Ewens-Pitman sampling formula (EPSF). For $\mathbf{m}=(m_{1},\ldots,m_{n})\in \mathcal{M}_{n,k}$
\begin{equation}\label{eq_ewe_py}
\text{Pr}[\mathbf{M}_{n}=\mathbf{m}]=n!\frac{\left(\frac{\theta}{\alpha}\right)_{(\sum_{i=1}^{n}m_{i})}}{(\theta)_{(n)}}\prod_{i=1}^{n}\frac{\left(\frac{\alpha(1-\alpha)_{(i-1)}}{i!}\right)^{m_{i}}}{m_{i}!}.
\end{equation}
We refer to Section~S2 of the Supplementary Material \citep{BFN(23)} for a representation of the EPSF in terms of a compound Poisson sampling model \citep[Chapter 7]{Cha(05)}. The distribution of $K_{n}$ follows from \eqref{eq_ewe_py} \citep[Chapter 3]{Pit(06)}. In particular, for $a>0$, $b\geq0$ and $u,v\in\mathbb{N}_{0}$ with $v\leq u$, let $\mathscr{C}(u,v;a,b)$ be the $(u,v)$-th non-centered generalized factorial coefficient, i.e., $\mathscr{C}(u,v;a,b)=(v!)^{-1}\sum_{0\leq i\leq v}(-1)^{i}{v\choose i}(-ia-b)_{(u)}$; see Section~S1 of the Supplementary Material \citep{BFN(23)}. Then, for $x\in\{1,\ldots,n\}$ it holds that
\begin{equation}\label{eq_dist_py}
\text{Pr}[K_{n}=x]=\frac{\left(\frac{\theta}{\alpha}\right)_{(x)}}{(\theta)_{(n)}}\mathscr{C}(n,x;\alpha,0).
\end{equation}
For $\alpha=0$, i.e. under the DP, the distribution of $K_{n}$ follows directly from \eqref{eq_dist_py} by letting $\alpha\rightarrow0$. The resulting distribution is expressed in terms of the $(u,v)$-th signless Stirling number $|s(u,v)|$, which arises by means of $|s(u,v)|=\lim_{a\rightarrow0}a^{-v}\mathscr{C}(u,v;a,0)$; see Section~S1 of the Supplementary Material \citep{BFN(23)} for details.

At the sampling level, the power-law tail behaviour of the PYP emerges naturally from the analysis of the large $n$ asymptotic behaviour of $K_{n}$ and $M_{r,n}$. For $\alpha\in(0,1)$ we denote by $f_{\alpha}$ the density function of a positive $\alpha$-stable r.v., and for any $\theta>-\alpha$ let $S_{\alpha,\theta}$ be a r.v. with density function
\begin{equation}\label{adivers}
f_{S_{\alpha,\theta}}(s)\propto s^{\frac{\theta-1}{\alpha}-1}f_{\alpha}(s^{-1/\alpha}),
\end{equation}
that is a generalized Mittag-Leffler density function, such that $\E[S^{r}_{\alpha,\theta}]=(\theta/\alpha)_{(r)}\Gamma(\theta)/\Gamma(\theta+r\alpha)$ for $r\geq1$, from which the mean and the variance of $S_{\alpha,\theta}$ follows immediately. \citet[Theorem 3.8]{Pit(06)} shows that, as $n\rightarrow+\infty$,
\begin{align} \label{eq:sigma_diversity}
n^{-\alpha}K_{n}\rightarrow S_{\alpha,\theta}\quad\text{almost surely},
\end{align}
and
\begin{align} \label{eq:sigma_diversity_m}
n^{-\alpha}M_{r,n}\rightarrow \frac{\alpha(1-\alpha)_{(r-1)}}{r!}S_{\alpha,\theta}\quad\text{almost surely}.
\end{align}
The r.v. $S_{\alpha,\theta}$ is positive and finite (almost surely), and it is typically referred to as Pitman's $\alpha$-diversity \citep{Pit(03),DF(20a),DF(20b),Ber(24)}. For $\alpha=0$, we recall that as $n\rightarrow+\infty$, $K(n)/\log n\rightarrow\theta$ almost surely \citep{Kor(73)} and  $M_{r,n}\rightarrow\ P_{\theta/r}$ almost surely \citep{Ewe(72)}, where $P_{\theta/r}$ is a Poisson r.v. with parameter $\theta/r$. Equation \eqref{eq:sigma_diversity} shows that $K_{n}$, for large $n$, grows as $n^{\alpha}$. This is precisely the growth of the number of distinct species in $n\geq1$ random samples from a power-law distribution of exponent $c=\alpha^{-1}$. Moreover, by combining \eqref{eq:sigma_diversity} and \eqref{eq:sigma_diversity_m}, it holds that $p_{\alpha,r}=\alpha(1-\alpha)_{(r-1)}/r!$ is the large $n$ asymptotic proportion of the number of distinct species with frequency $r$. Therefore, $ p_{\alpha,r}\simeq c_{\alpha}r^{-\alpha -1}$ for large $r$, for a constant $c_{\alpha}$. This is precisely the distribution of the number of distinct species with frequency $r$  in $n\geq1$ random samples from a power-law distribution of exponent $c=\alpha^{-1}$.


\section{Coverage probabilities}\label{sec2}

The estimation of coverage probabilities, or rare probabilities, dates back to the early work of Alan M. Turing and Irving J. Good at Bletchley Park in 1940s \citep{Goo(53)}. Let $\mathbf{X}_{n}$ be a random sample under the ``classical framework" for SSPs, and denote by $(N_{j,n})_{j\geq1}$ the species' frequencies in $\mathbf{X}_{n}$. The coverage probability of order $r\geq0$ is
\begin{displaymath}
\mathfrak{p}_{r,n}=\sum_{j\geq1}p_{j}I(N_{j,n}=r),
\end{displaymath}
namely the probability mass of species observed with frequency $r\geq0$ in the sample. Of special interest is $\mathfrak{p}_{0,n}$, namely the coverage probability of order $0$, which is referred to as the missing mass.  The problem of estimating $\mathfrak{p}_{r,n}$ first appeared in ecology \citep{Fis(43),Goo(53),Cha(92),Bun(93)}, and over the past three decades its importance has grown in biological sciences \citep{Kro(99),Mao(04),Gao(07)}. Coverage probabilities arise in DNA sequencing data in the form of sample coverage, or saturation, and frequency estimation. Sample coverage, i.e. the proportion of molecules in an (infinite) population that are observed in the sample, is related to the estimation of the population abundance of unobserved molecules (the missing mass), as it is equal to $1-\mathfrak{p}_{0,n}$. An accurate estimation of the sample coverage allows to determine if the sample is saturated, i.e. all species have been sampled. Sequencing experiments with high sample coverage are crucial to avoid sampling bias and to produce robust findings. Given that high sample coverage is often an issue in degraded DNA samples, in metagenomics, such as the study of the microbiome, and in single-cell DNA sequencing, where sample coverage varies in each cell, a correct sample coverage estimation remains critical \citep[Section 4]{Den(19)}.

\begin{rmk}\label{rmk_cov}
Besides biological and physical sciences, the problem of estimating coverage probabilities, and in particular the estimation of the missing mass, has found application in many scientific fields: i) statistical machine learning, e.g., estimation of node degrees of networks based on source-destination data \citep{Zha(05)}, optimal discovery with probabilistic expert advice \citep{Bub(13)} and frequency recovery from sketched data through random hashing \citep{Cai(18)}; ii) theoretical computer science, in the context of recovering from sketches the number of distinct species through probabilistic counting algorithms \citep{Mot(06)}; iii) information theory, in the context of universal (lossless) compression of sequences over arbitrarily large alphabets \citep{Orl(04),Ben(18)}; iv) empirical linguistics, e.g., estimation of the size of a vocabulary \citep{Gal(95)} and $m$-gram language modeling in natural language processing \citep{Oha(12)}; v) forensic DNA analysis, in the context of rare-type matching problem \citep{Ane(17),Cer(17),FN(24)}.
\end{rmk}

\subsection{A nonparametric estimator of $\mathfrak{p}_{r,n}$}

Under the ``classical framework", the Good-Turing estimator is the most popular estimator of $\mathfrak{p}_{r,n}$ \citep{Goo(53),Goo(56),Rob(56),Rob(68)}. Denoting by $M_{r,n}=m_{r}$ the number of distinct species with frequency $r\geq1$ in $\mathbf{X}_{n}$, the Good-Turing estimator is given by
\begin{equation}\label{gt_estim}
\tilde{\mathfrak{p}}_{r,n}=(r+1)\frac{m_{r+1}}{n}.
\end{equation}
The Good-Turing estimator is a nonparametric estimator of $\mathfrak{p}_{r,n}$, in the sense that it does not rely on any distributional assumption on the unknown $p$. It has a straightforward heuristic derivation, which relies on a comparison between the expectations of $\mathfrak{p}_{r,n}$ and $M_{r,n}$, for $r\geq0$, that is
\begin{displaymath}
\E[\mathfrak{p}_{r,n}]=\frac{r+1}{n+1}\E[M_{r+1,n+1}],
\end{displaymath}
and then the approximation of $(n+1)^{-1}\E[M_{r+1,n+1}]$ with the observable $n^{-1}m_{r}$, assuming $n$ large enough \citep{Goo(53),Rob(68)}. The estimator $\tilde{\mathfrak{p}}_{r,n}$ also admits a nonparametric empirical Bayes interpretation \citep{Rob(56),Rob(64)}, that is $\tilde{\mathfrak{p}}_{r,n}$ may be viewed as a posterior expectation with respect to a nonparametric prior estimated from the sample \citep{Efr(76),Efr(03)}. The Good-Turing estimator has been the subject of numerous studies, e.g., central limit theorems, local limit theorems and large deviation principles \citep{Est(82),Est(83),Zha(09), Gao13, Gra(17)}, admissibility and concentration properties \citep{Rob(68),McA(00), Oha(12), Ben(17),Sko(20)}, consistency and convergence rates \citep{McA(03), Mos(15), Fad(18)}, and optimality through minimax lower bounds \citep{Orl(03),Ach(18),Fad(18)}.

\subsection{BNP inference for coverage probabilities}\label{sec21}

In the ``BNP framework" \eqref{eq:exchangeable_model}, \citet{Arb(17)} computed the posterior distribution of $\mathfrak{p}_{r,n}$, given $\mathbf{X}_{n}$. In particular, assume that the random sample $\mathbf{X}_{n}$ features $K_{n}=k$ distinct species with frequencies $\mathbf{N}_{n}=(n_{1},\ldots,n_{k})$, such that $M_{r,n}=m_{r}$ for $r\geq1$, and let $\text{Beta}(a,\,b)$ be the Beta distribution with parameter $(a,\,b)$, for $a,\,b>0$. Then,
\begin{equation}\label{post_miss}
\mathfrak{p}_{0,n}\,|\,\mathbf{X}_{n}\sim\text{Beta}(\theta+\alpha k,\,n-\alpha k),
\end{equation}
and for $r\geq1$
\begin{equation}\label{post_rare}
\mathfrak{p}_{r,n}\,|\,\mathbf{X}_{n}\sim\text{Beta}((r-\alpha)m_{r},\,\theta+n-(r-\alpha)m_{r}).
\end{equation}
According to \eqref{post_miss} and \eqref{post_rare}, for fixed $\alpha\in(0,1)$ and $\theta>-\alpha$, $K_{n}=k$ is a sufficient statistic to make inference on $\mathfrak{p}_{0,n}$, whereas $M_{r,n}=m_{r}$ is a sufficient statistic to make inference on $\mathfrak{p}_{r,n}$. Differently, if $\alpha=0$ then $n$ and $M_{r,n}$ are sufficient statistics to infer $\mathfrak{p}_{0,n}$ and $\mathfrak{p}_{r,n}$, respectively. Besides leading to BNP estimates of $\mathfrak{p}_{n,r}$,  \eqref{post_miss} and \eqref{post_rare} are critical to quantify uncertainty of estimates by means of credible intervals by means of, e.g., concentration inequalities  \citep{Sko(21)} and Monte Carlo sampling \citet{Arb(17)}. This is possible in practice because of the simple form of the posterior distribution, which allows to exploit well known properties of the Beta distribution to deal with BNP inference for coverage probabilities under the PYP prior.

Under the assumption of a squared loss function, BNP estimators of $\mathfrak{p}_{0,n}$ and $\mathfrak{p}_{r,n}$ follows directly from \eqref{post_miss} and \eqref{post_rare}, respectively, by taking corresponding expected values. That is,
\begin{equation}\label{est_miss}
\hat{\mathfrak{p}}_{0,n}=\E[\mathfrak{p}_{0,n}\,|\,\mathbf{X}_{n}]=\frac{\theta+k\alpha}{\theta+n},
\end{equation}
and for $r\geq1$
\begin{equation}\label{est_rare}
\mathfrak{\hat{p}}_{r,n}=\E[\mathfrak{p}_{r,n}\,|\,\mathbf{X}_{n}]=(r-\alpha)\frac{m_{r}}{\theta+n}.
\end{equation}
The BNP estimators \eqref{est_miss} and \eqref{est_rare} first appeared in \citet{Lij(07)} and \citet{Fav(12)}, where they are obtained as means of a direct application of the predictive probability \eqref{eq:pred} of the PYP prior. See also \citet{Deb(15)}. In particular, by combining the definition of $\mathfrak{p}_{0,n}$ and predictive probabilities, the BNP estimator of $\mathfrak{p}_{0,n}$ under a squared loss function is precisely the probability that the $(n+1)$-th draw belongs to a new species, i.e. a species not observed in the sample; this is  probability $(\theta+k\alpha)/(\theta+n)$ attached to $\nu$ in \eqref{eq:pred}. Similarly, by combining the definitions of $\mathfrak{p}_{r,n}$ and predictive probabilities, the BNP estimator of $\mathfrak{p}_{r,n}$ under a squared loss function is precisely the probability that the $(n+1)-th$ draw belongs to a species observed with frequency $r$ in the sample; this is the probability $(r-\alpha)/(\theta+n)$ attached to the empirical part of \eqref{eq:pred}, i.e. the probability of observing a specific species with frequency $r$ in the sample, multiplied by the number $m_{r}$ of species with frequency $r$ in the sample. 

A peculiar feature of the Good-Turing estimator \eqref{gt_estim}, which arises from its heuristic derivation, is that it depends on $m_{r+1}$, and not on $m_{r}$ as one would intuitively expect for an estimator of $\mathfrak{p}_{r,n}$. This is contrast with the BNP estimator. Such a feature, combined with the irregular behaviour of the $m_{r}$'s, may lead to absurd estimates, the most common being $\mathfrak{\tilde{p}}_{r,n}=0$ when $m_{r}>0$ and $m_{r+1}=0$.  To overcome this drawback, \citet{Goo(53)} suggested to smooth the estimator \eqref{gt_estim} by replacing the $m_{r}$'s with more regular $m^{\prime}_{r}$'s, with $m^{\prime}_{r}$ being, e.g., a suitable parabolic function of $r$, a proportion of the number $k$ of distinct species in the sample, the expectation with respect to a suitable parametric model. The BNP estimator $\mathfrak{\hat{p}}_{r,n}$ may be interpreted as a smoothed Good-Turing estimator, where the smoothing is induced by the PYP prior. In particular, let $a_{n}\simeq b_{n}$ mean that $\lim_{n\rightarrow+\infty}a_{n}/b_{n}=1$, namely $a_{n}$ and $b_{n}$ are asymptotically equivalent as $n$ tends to infinity. \citet[Theorem 1]{Fav(16)} show that, as $n\rightarrow+\infty$, for $r\geq0$
\begin{equation}\label{smooth_gt}
\mathfrak{\hat{p}}_{r,n}\simeq(r+1)\frac{m^{\prime}_{r+1}}{n},
\end{equation}
where
\begin{equation}\label{smoother}
m^{\prime}_{r+1}=\frac{\alpha(1-\alpha)_{(r)}}{(r+1)!}k.
\end{equation}
According to \eqref{smooth_gt}, the BNP estimator $\mathfrak{\hat{p}}_{r,n}$ is asymptotically equivalent, for large $n$, to a smoothed Good-Turing estimator, where the smoothed $m^{\prime}_{r}$ in \eqref{smoother} is the proportion $\alpha(1-\alpha)_{(r)}/(r+1)!$ of the number $k$ of species in the sample. While smoothing techniques for the Good-Turing estimator were introduced as an ad-hoc tool for post-processing the $m_{r}$'s in order to improve the performance of $\mathfrak{\tilde{p}}_{r,n}$, \citet{Fav(16)} show that smoothing emerges naturally from a BNP approach to estimate $\mathfrak{p}_{r,n}$. We refer to \citet{Arb(17)} for high-order refinements of \eqref{smooth_gt}.


\section{The number of unseen species}\label{sec3}

The unseen-species problem is an $m$-step ahead generalization the problem of estimating the missing mass. For $m\geq1$ let $\mathbf{X}_{n+m}=(X_{1},\ldots,X_{n},X_{n+1},\ldots,X_{n+m})$ be a random sample under the ``classical framework" for SSPs, of which only the first $n$ elements are assumed to be observed, and denote by $(N_{j,n})_{j\geq1}$ and $(N_{j,m})_{j\geq1}$ the species' frequencies in $\mathbf{X}_{n}$ and $(X_{n+1},\ldots,X_{n+m})$, respectively. The number of unseen (distinct) species is defined as
\begin{displaymath}
\mathfrak{u}_{n,m}=\sum_{j\geq1}I(N_{j,n}=0)I(N_{j,m}>0),
\end{displaymath}
namely the number of hitherto unseen (distinct) species that would be observed if $m$ additional samples were collected from the same population. As the missing mass, the unseen-species problem first appeared in ecology  \citep{Fis(43),Goo(56),Cha(84),Cha(92)}, and over the past three decades its importance has grown in biological and physical sciences. In molecular biological data, the unseen-species problem arises in the estimation of the complexity of sequencing libraries \citep{Dal(13),Ion(09)}. While in low-complexity libraries a large proportion of the sample is composed by only a small number of unique molecules, high-complexity libraries usually display a large number of molecules, providing more information for a fixed level of sequencing. Hence, high-complexity libraries are often preferred by researchers. To evaluate library complexity, the complexity curve, or Species Accumulation Curve (SAC), is defined as the number of additional species that are observed as the sampling effort increases. This is the number of unseen species, interpreted as a function of $m$. The problem of library complexity estimation plays a crucial role to predict the benefit of additional sequencing, and optimize resource in the planning stages of experiments \citep[Section 3]{Den(19)}.

\begin{rmk}\label{rmk_uns}
As a generalization of the problem of estimating the missing mass, the unseen-species problem has appeared in some of the fields described in Remark \ref{rmk_cov}, with interest in statistical machine learning and theoretical computer science \citep{Haa(95),Flo(07),Hao(20a)}, and in empirical linguistics and natural language processing  \citep{Thi(87),Gal(95), Oha(12)}. In principle, we may say that all the statistical problems described in Remark \ref{rmk_cov} admit a natural extension in which $m>1$ additional unobserved sample are considered.
\end{rmk}

\subsection{A nonparametric estimator of $\mathfrak{u}_{n,m}$}

Under  the ``classical framework", the Good-Toulmin estimator is the most popular estimator of $\mathfrak{u}_{n,m}$ \citep{Goo(56),Efr(76)}. If $\lambda=m/n$ and $M_{r,n}=m_{r}$ denotes the number of distinct species with frequency $r\geq1$ in $\mathbf{X}_{n}$, then the Good-Toulmin estimator is
\begin{equation}\label{gtou_estim}
\mathfrak{\tilde{u}}_{n,m}=\sum_{i\geq1}(-1)^{i+1}\lambda^{i}m_{i}.
\end{equation}
For $\lambda=n^{-1}$ the estimator \eqref{gtou_estim} reduces to the Good-Turing estimator of $\mathfrak{p}_{0,n}$. The Good-Toulmin estimator \eqref{gtou_estim} is a nonparametric estimator of $\mathfrak{u}_{n,m}$, as it does not rely on any distributional assumption. The estimator $\mathfrak{\tilde{u}}_{n,m}$ was first obtained by \citet{Goo(56)} through a comparison between the expectations of $\mathfrak{u}_{n,m}$ and $M_{r,n}$, in analogy with the Good-Turing estimator in \citet{Goo(53)}, whereas \citet{Efr(76)} proved that $\mathfrak{\tilde{u}}_{n,m}$ admits a nonparametric empirical Bayes derivation \citep{Mao(02)}. In particular, \citet{Efr(76)} observed that, for $\lambda\geq1$, the geometrically increasing magnitude of $\lambda^{i}$ produces some wild oscillations in \eqref{gtou_estim}, which are undesirable. To overcome this drawback, they proposed a modification of $\mathfrak{\tilde{u}}_{n,m}$ that relies on a Euler-type random truncation of the series \eqref{gtou_estim}. Motivated by the increasing interest in the range $\lambda>1$, especially in biological applications, Efron-Thisted estimator has been the subject of breakthrough studies \citep{Orl(17), Wu(19),Pol(20)}.  \citet{Orl(17)} showed that Efron-Thisted estimator provably estimates $\mathfrak{u}_{n,m}$ all of the way up to $\lambda\asymp\log n$, that such a range is the best possible, and that the estimator's mean-square error is minimax near-optimal for any $\lambda$. These theoretical guarantees do not rely on any assumption on the underlying unknown distribution $p$ for the $X_{i}$'s, and they hold uniformly over all discrete distributions, thus providing a theory in its greatest generality.

A generalization, or refinement, of the number of unseen species $\mathfrak{u}_{n,m}$ is the unseen species' prevalences. Formally, the unseen species' prevalence of order $r\geq1$ is defined as
\begin{displaymath}
\mathfrak{u}_{r,n,m}=\sum_{j\geq1}I(N_{j,n}=0)I(N_{j,m}=r).
\end{displaymath}
That is, $\mathfrak{u}_{r,n,m}$ is the number of hitherto unseen species that would be observed with frequency $r$ if $m$ additional samples were collected from the same population. For small values of $r$, the unseen species' prevalence of order $r$ is also referred to as the number of unseen rare species. This is a critical quantity for the understanding of the species composition of the unobserved individuals. In ecology and biology, for instance, conservation of biodiversity requires a careful control of the number of species with frequency less than a certain threshold, namely rare species \citep{Mag(03),Tho(04)}. In genomics, rare species represent a critical issue, the reason being that species that appear only once or twice are often associated with deleterious diseases \citep{Lai(10)}. For $\lambda<1$, a nonparametric estimator of $\mathfrak{u}_{r,n,m}$ may be obtained by comparing expectations of $\mathfrak{u}_{r,n,m}$ and $M_{r,n}$, which leads to
\begin{equation}\label{gtou_estim_gen}
\mathfrak{\tilde{u}}_{r,n,m}=\sum_{i\geq1}(-\lambda)^{i+r-1}{r+i-1\choose i-1}m_{i+r-1}.
\end{equation}
For $\lambda\geq1$, \citet{Hao(20)} proposed a modification of $\mathfrak{\tilde{u}}_{r,n,m}$ in the spirit of Efron-Thisted estimator. In particular, along the same lines of te original work of \citet{Orl(17)}, \citet{Hao(20)} show that their estimator provably estimates $\mathfrak{u}_{r,n,m}$ all of the way up to $\lambda\asymp r^{-1}\log n$, that such a range is the best possible, and that the estimator's mean-square error is minimax near-optimal.

\subsection{BNP inference for the number of unseen species}\label{sec31}

In the ``BNP framework" \eqref{eq:exchangeable_model}, \citet{Lij(07)} computed the posterior distribution of $\mathfrak{u}_{n,m}$ given $\mathbf{X}_{n}$. In particular, assume that the random sample $\mathbf{X}_{n}$ features $K_{n}=k$ distinct species with frequencies $\mathbf{N}_{n}=(n_{1},\ldots,n_{k})$, such that $M_{r,n}=m_{r}$ for $r\geq1$. If $\alpha\in(0,1)$ then for $x\in\{0,1,\ldots,m\}$
\begin{equation}\label{post_py_k}
\text{Pr}[\mathfrak{u}_{n,m}=x\,|\,\mathbf{X}_{n}]=\frac{\left(k+\frac{\theta}{\alpha}\right)_{(x)}}{(\theta+n)_{(m)}}\mathscr{C}(m,x;\alpha,-n+k\alpha).
\end{equation}
Under the assumption of a squared loss function, a BNP estimator of $\mathfrak{u}_{n,m}$ \citep{Fav(09)} is the expected value of \eqref{post_py_k}, i.e.,
\begin{equation}\label{est_py_k}
\mathfrak{\hat{u}}_{n,m}=\E[\mathfrak{u}_{n,m}\,|\,\mathbf{X}_{n}]=\left(k+\frac{\theta}{\alpha}\right)\left(\frac{(\theta+n+\alpha)_{(m)}}{(\theta+n)_{(m)}}-1\right).
\end{equation}
For $\alpha=0$, i.e. under the DP, the posterior distribution and the BNP estimator of $\mathfrak{u}_{n,m}$ are obtained from \eqref{post_py_k} and \eqref{est_py_k}, respectively, by letting $\alpha\rightarrow0$ \citep{Lij(07)}. In particular, for $b\geq0$ let $|s(u,v;b)|$ be the non-centered signless Stirling number of the first type, which arises from $\mathscr{C}(u,v;a,b)$ by means of $|s(u,v;b)|=\lim_{a\rightarrow0}a^{-v}\mathscr{C}(u,v;a,b)$. If $\alpha=0$, then for $x\in\{0,1,\ldots,m\}$
\begin{equation}\label{post_dp_k}
\text{Pr}[\mathfrak{u}_{n,m}=x\,|\,\mathbf{X}_{n}]=\frac{\theta^{k}}{(\theta+n)_{(m)}}|s(m,x;n)|
\end{equation}
and
\begin{equation}\label{est_dp_k}
\mathfrak{\hat{u}}_{n,m}=\E[\mathfrak{u}_{n,m}\,|\,\mathbf{X}_{n}]=\sum_{i=1}^{m}\frac{\theta}{\theta+n+i-1}.
\end{equation}
For $m=1$ the estimator \eqref{est_py_k} reduces to the estimator \eqref{est_miss} of $\mathfrak{p}_{0,n}$. As a generalization of the estimators \eqref{est_py_k} and \eqref{est_dp_k}, \citet{Fav(13)} introduced a BNP estimator of the unseen species' prevalence $\mathfrak{u}_{r,n,m}$ \citep{Deb(15)}. However, to date no closed-form expressions are available for the posterior distribution of $\mathfrak{u}_{r,n,m}$, given the sample $\mathbf{X}_{n}$.

According to \eqref{post_py_k}, for any fixed $\alpha\in(0,1)$ and $\theta>-\alpha$, $K_{n}=k$ is a sufficient statistic for estimating $\mathfrak{u}_{n,m}$. If $\alpha=0$, then the sample size $n$ is a sufficient statistic to infer $\mathfrak{u}_{n,m}$. The posterior distributions \eqref{post_py_k} and \eqref{post_dp_k} are critical to quantify uncertainty of  \eqref{est_py_k} and \eqref{est_dp_k} by means of credible intervals. In practice, Monte Carlo sampling of \eqref{post_py_k} and \eqref{post_dp_k} is doable for small values of $n$ and $m$, and it becomes impossible for large values of $n$ or $m$. This is because for large $n$ and $m$, and even only for large $m$, the computational burden for evaluating generalized factorial coefficients and Stirling numbers becomes overwhelming. To overcome this drawback, \citet{Fav(09)} proposed large $m$ approximations of the posterior distributions \eqref{post_py_k} and \eqref{post_dp_k}. Let $\mathfrak{u}_{n,m}(k)$ denote a r.v. whose distribution is \eqref{post_py_k} for $\alpha\in(0,1)$ and \eqref{post_dp_k} for $\alpha=0$. For $\alpha\in(0,1)$, \citet[Proposition 2]{Fav(09)} show that, as $m\rightarrow+\infty$,
\begin{equation}\label{asimp_py}
\frac{\mathfrak{u}_{n,m}(k)}{m^{\alpha}}\rightarrow B_{\frac{\theta}{\alpha}+k,\,\frac{n}{\alpha}-k}S_{\alpha,\theta+n}\quad\text{almost surely},
\end{equation}
where $B_{\theta/\alpha+k,n/\alpha-k}$ is a Beta r.v. with parameter $(\theta/\alpha+k,n/\alpha-k)$ and $S_{\alpha,\theta+n}$ is the Pitman's $\alpha$-diversity with parameter $(\alpha,\theta+n)$, with $B_{\theta/\alpha+k,n/\alpha-k}$ being independent of $S_{\alpha,\theta+n}$ \citep{DF(20a)}. For $\alpha=0$, as $m\rightarrow+\infty$ 
\begin{equation}\label{asimp_dp}
\frac{\mathfrak{u}_{n,m}(k)}{\log(m)}\rightarrow(\theta+n) \quad\text{almost surely}.
\end{equation}
The large $m$ asymptotics \eqref{asimp_py} of $\mathfrak{u}_{n,m}(k)$ may be viewed as a posterior counterpart of the large $n$ asymptotic behaviour of $K_{n}$ in \eqref{eq:sigma_diversity}; the limiting r.v. \eqref{asimp_py} is referred to as posterior Pitman's $\alpha$-diversity. See \citet{Fav(13)} for a generalization of \eqref{asimp_py} to the unseen species' prevalences $\mathfrak{u}_{r,n,m}$.

\citet{Fav(09)} introduced a Monte Carlo scheme for sampling the posterior $\alpha$-diversity, and applied it to obtain a large $m$ approximation of credible intervals for the BNP estimate \eqref{est_py_k}. The critical step consists in sampling the Pitman's $\alpha$-diversity $S_{\alpha,\theta+n}$, whose density function is \eqref{adivers} with $\theta+n$ in place of $\theta$. This problem boils down to sample from a polynomially tilted positive $\alpha$-stable distribution. That is, if $f_{\alpha}$ denotes the density function of a positive $\alpha$-stable r.v., then the distribution of $S_{\alpha,\theta+n}^{-1/\alpha}$ has the polynomially tilted positive $\alpha$-stable density function
\begin{displaymath}
f_{S_{\alpha,\theta+n}^{-1/\alpha}}(s)\propto s^{-(\theta+n)}f_{\alpha}(s).
\end{displaymath}
The Monte Carlo approach of \citet{Fav(09)} is suitable for scenarios where $n$ is not large, and $m$ is much more large than $n$. This is because: i) for large $n$ the computational burden for the sampling from a polynomially tilted positive $\alpha$-stable distribution becomes overwhelming \citep{Dev(09),Hof(11)}; ii) the large $m$ asymptotics \eqref{asimp_py} is of a qualitative nature, in the sense that it does not quantify the error in approximating the posterior distribution \eqref{post_py_k} with the distribution of the limiting r.v. \eqref{asimp_py}. In the next proposition, we introduce a representation of the posterior distributions \eqref{post_py_k} and \eqref{post_dp_k}. Besides providing an intuitive interpretation of \eqref{post_py_k} in terms of compound Binomial distributions, it leads to a straightforward Monte Carlo sampling from \eqref{post_py_k} and \eqref{post_dp_k}, for arbitrarily large $n$ and $m$. An analogous representation is introduced for the posterior distribution of $\mathfrak{u}_{r,n,m}$, thus filling a gap in the literature. We denote by $\text{Binomial}(n,\,p)$ a Binomial distribution with parameter $(n,p)$, $n\in\mathbb{N}$ and $p\in(0,1)$.

\begin{prp}\label{prp1}
Let $\mathbf{X}_{n}$ be a random sample from $P\sim\text{PYP}(\alpha,\theta)$, for $\alpha\in[0,1)$ and $\theta>-\alpha$, such that $\mathbf{X}_{n}$ features $K_{n}=k$ and $\mathbf{N}_{n}=(n_{1},\ldots,n_{k})$. Moreover, denote by $K^{\ast}_{m}$ and $M^{\ast}_{r,m}$ the random numbers of distinct species and distinct species with frequency $r\geq1$, respectively, in $m\geq1$ random samples from $P\sim\text{PYP}(\alpha,\theta+n)$. Then, 
\begin{itemize}
\item[i)] for $\alpha\in(0,1)$
\begin{equation}\label{eq_id_py}
\mathfrak{u}_{n,m}\,|\,\mathbf{X}_{n}\sim\text{Binomial}\left(K^{\ast}_{m},\,B_{\frac{\theta}{\alpha}+k,\,\frac{n}{\alpha}-k} \right),
\end{equation}
with $B_{\theta/\alpha+k,n/\alpha-k}$ being a Beta r.v. with parameter $(\theta/\alpha+k,n/\alpha-k)$, which is independent of the r.v. $K^{\ast}_{m}$;
\item[ii)] for $\alpha=0$ 
\begin{equation}\label{eq_id_dp}
\mathfrak{u}_{n,m}\,|\,\mathbf{X}_{n}\sim\text{Binomial}\left(K^{\ast}_{m},\,\frac{\theta}{\theta+n}\right);
\end{equation}
\item[iii)] for $\alpha\in(0,1)$
\begin{equation}\label{eq_id_py_r}
\mathfrak{u}_{r,n,m}\,|\,\mathbf{X}_{n}\sim\text{Binomial}\left(M^{\ast}_{r,m},\,B_{\frac{\theta}{\alpha}+k,\,\frac{n}{\alpha}-k}\right),
\end{equation}
with $B_{\theta/\alpha+k,n/\alpha-k}$ being a Beta r.v. with parameter $(\theta/\alpha+k,n/\alpha-k)$, which is independent of the r.v. $M^{\ast}_{r,m}$;
\item[iv)] for $\alpha=0$
\begin{equation}\label{eq_id_dp_r}
\mathfrak{u}_{r,n,m}\,|\,\mathbf{X}_{n}\sim \text{Binomial}\left(M^{\ast}_{r,m},\,\frac{\theta}{\theta+n}\right).
\end{equation}
\end{itemize}
\end{prp}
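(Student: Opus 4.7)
The plan is to prove Proposition \ref{prp1} through an explicit coupling: to construct a sequential three-layer process that reproduces the posterior predictive law of $(X_{n+1}, \ldots, X_{n+m})$ given $\mathbf{X}_n$ and that simultaneously realises the compound Binomial structure on the right-hand side of each identity. The three layers are a PYP$(\alpha, \theta+n)$ chain $(Y_1, \ldots, Y_m)$ with $K^*_m$ distinct $Y$-species, a Beta--Bernoulli marking of each distinct $Y$-species as ``new'' or ``old'', and an auxiliary P\'olya-urn assignment that maps each ``old''-marked $Y$-species to one of the labels $\{S^*_1, \ldots, S^*_k\}$. By construction, brand-new species in $(X_{n+1}, \ldots, X_{n+m})$ correspond exactly to ``new''-marked distinct $Y$-species, so $\mathfrak{u}_{n,m}$ equals the number of ``new'' marks.

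First, I would construct the coupling sequentially: at step $i$, draw $Y_i$ from the PYP$(\alpha, \theta+n)$ predictive; if it creates a new $Y$-species (the $(s+t+1)$-th, with $s, t$ the current counts of ``new'' and ``old'' marks), mark it ``new'' with probability $(\theta/\alpha + k + s)/((\theta+n)/\alpha + s + t)$, else mark it ``old'' and assign it to label $j$ via a P\'olya urn with initial weights $\{n_j - \alpha\}_{j=1}^k$ and increment $\alpha$; finally, set $X_{n+i}$ to a fresh label if the chosen $Y$-species is ``new''-marked, and to $S^*_j$ if it is ``old''-marked and assigned to $j$. A direct calculation, using $K^*_{i-1} = s + t$ throughout, shows that the coupled process produces the three event types (brand-new species, repeat of the $l$-th brand-new, match of the $j$-th species of $\mathbf{X}_n$) with probabilities $(\theta + (k+s)\alpha)/(\theta+n+i-1)$, $(m_l^{new} - \alpha)/(\theta+n+i-1)$, and $(n_j + m_j^{old} - \alpha)/(\theta+n+i-1)$, respectively. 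These coincide exactly with the predictive probabilities \eqref{eq:pred} of the original posterior predictive, upon identifying $s$ with the current brand-new species count.

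Once the coupling is verified, parts (i) and (iii) follow at once. By De Finetti's theorem applied to the Beta--Bernoulli sub-urn (initial weights $(\theta+k\alpha, n-k\alpha)$ and increment $\alpha$), the marks are, conditionally on $B$, iid Bernoulli$(B)$ with $B \sim \text{Beta}(\theta/\alpha + k, n/\alpha - k)$. Hence $\mathfrak{u}_{n,m}$ equals the number of ``new'' marks among $K^*_m$ distinct $Y$-species, i.e.\ $\text{Binomial}(K^*_m, B)$, yielding \eqref{eq_id_py}. Since marks are applied at the $Y$-species level independently of $Y$-frequency, restricting to $Y$-species with $Y$-frequency $r$ gives $\mathfrak{u}_{r,n,m} \stackrel{\text{d}}{=} \text{Binomial}(M^*_{r,m}, B)$, yielding \eqref{eq_id_py_r}. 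Parts (ii) and (iv) follow by taking $\alpha \to 0$: both Beta parameters tend to infinity while their ratio is preserved, concentrating $B$ at its mean $\theta/(\theta+n)$ and recovering \eqref{eq_id_dp} and \eqref{eq_id_dp_r}.

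The main obstacle is the calibration of the auxiliary P\'olya urn: its initial weights $n_j - \alpha$ and increment $\alpha$ must be such that the contribution to the match-old-$j$ probability from (c1) a ``repeat'' of an existing ``old''-marked $Y$-species assigned to $j$, equal to $(m_j^{old} - L_j \alpha)/(\theta+n+i-1)$, combines with the contribution from (c2) a fresh ``old'' mark assigned to $j$, equal to $(n_j + (L_j - 1)\alpha)/(\theta+n+i-1)$, so that the $L_j\alpha$ terms cancel to yield $(n_j + m_j^{old} - \alpha)/(\theta+n+i-1)$ independently of $L_j$. This precise cancellation, together with the identity $K^*_{i-1} = s + t$ that collapses the Beta--Bernoulli weight against the PYP$(\alpha,\theta+n)$ normaliser, is the non-obvious heart of the construction, and is what forces the exact Beta parameters $(\theta/\alpha + k, n/\alpha - k)$ appearing in Proposition \ref{prp1}.
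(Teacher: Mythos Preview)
Your coupling argument is correct and constitutes a genuinely different proof from the one in the paper. The paper proceeds purely analytically: it takes the closed-form $d$-th moments of $\mathfrak{u}_{n,m}\mid\mathbf{X}_n$ and $\mathfrak{u}_{r,n,m}\mid\mathbf{X}_n$ from \citet{Fav(09)} and \citet{Fav(13)}, rewrites them via Stirling-number and generalized-factorial-coefficient identities, inserts a Beta integral to produce the factor $(\theta/\alpha+k)_{(t)}/((\theta+n)/\alpha)_{(t)}$, and then recognises the resulting sum as the $d$-th moment of a compound Binomial. Equality in distribution follows by moment determinacy on a bounded support.

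Your route is constructive rather than computational. The key insight---that a PYP$(\alpha,\theta+n)$ chain, together with an independent P\'olya-urn mark at each species birth and a second P\'olya urn routing ``old'' marks to the observed labels, reproduces the posterior predictive \eqref{eq:pred}---is exactly the quasi-conjugacy the paper alludes to but does not exploit directly. The crucial point you identify (the cancellation of the $L_j\alpha$ terms so that the $X$-transition probabilities depend only on the $X$-history and not on the hidden variables $t,L_j$) is precisely what makes the coupling a valid Markov functional, and you state it correctly. What your approach buys is an explanation of \emph{why} the Beta parameters are $(\theta/\alpha+k,\,n/\alpha-k)$ and an immediate Monte Carlo scheme; what the paper's approach buys is that it requires no new construction, only manipulation of formulas already in the literature. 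Both handle the DP case by the same limiting argument $\alpha\to 0$.
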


See Section~S3 of the Supplementary Material \citep{BFN(23)} for the proof of Proposition \ref{prp1}. Proposition \ref{prp1} is a consequence of the conjugacy and quasi conjugacy properties of the DP and the PYP, respectively \citep{Fer(73),Pit(96)}. From \eqref{eq_id_py} the posterior distribution \eqref{post_py_k} is the distribution of the number of successes in a random number $K^{\ast}_{m}$ of independent Bernoulli trials, each trial with a Beta random probability $B_{\theta/\alpha+k,n/\alpha-k}$ of success. Note that the expectation of $B_{\theta/\alpha+k,n/\alpha-k}$ is the BNP estimator of the missing mass \eqref{est_miss}. We write
\begin{displaymath}
\mathfrak{\hat{u}}_{n,m}=\frac{\theta+k\alpha}{\theta+n}\E[K^{\ast}_{m}],
\end{displaymath}
where a simple expression for $\E[K^{\ast}_{m}]$ is available in \citet[Chapter 3]{Pit(06)}. From \eqref{eq_id_dp}, the posterior distribution \eqref{post_dp_k} coincides with the distribution of the number of successes in a random number $K^{\ast}_{m}$ of independent Bernoulli trials, each trial with probability $\theta/(\theta+n)$ of success, i.e. the BNP estimator of the missing mass \eqref{est_miss}. Then,
\begin{displaymath}
\mathfrak{\hat{u}}_{n,m}=\frac{\theta}{\theta+n}\E[K^{\ast}_{m}].
\end{displaymath}
Along similar lines, from \eqref{eq_id_py_r} and \eqref{eq_id_dp_r} we obtain an alternative representation of the BNP estimator $\mathfrak{u}_{r,n,m}$ \citep{Fav(13)}, i.e.,
\begin{displaymath}
\mathfrak{\hat{u}}_{r,n,m}=\frac{\theta+k\alpha}{\theta+n}\E[M^{\ast}_{r,m}],
\end{displaymath}
where a simple closed-form expression for $\E[M^{\ast}_{r,m}]$ is available in \citet[Proposition 1]{Fav(13)}. According to \eqref{eq_id_py_r}, for fixed $\alpha\in(0,1)$ and $\theta>-\alpha$ the number $K_{n}$ of distinct species in the sample is a sufficient statistic to make inference on $\mathfrak{u}_{r,n,m}$. Moreover, if $\alpha=0$, i.e. under the DP, then the sample size $n$ is a sufficient statistic to infer $\mathfrak{u}_{r,n,m}$. 

The representations \eqref{eq_id_py} and \eqref{eq_id_dp} are useful for Monte Carlo sampling from the posterior distributions \eqref{post_py_k} and \eqref{post_dp_k}. They allow to sample from \eqref{post_py_k} and \eqref{post_dp_k} for arbitrarily large values of $n$ and $m$, thus avoiding the use of the large $m$ approximation proposed in \citet{Fav(09)}. For fixed $\alpha\in(0,1)$ and $\theta>-\alpha$, Monte Carlo sampling from \eqref{post_py_k} consists of three steps: i) sample from $\text{Beta}(\theta/\alpha+k,\,n/\alpha-k)$; ii) independently of step i), sample the r.v. $K^{\ast}_{m}$ under $P\sim\text{PYP}(\alpha,\theta+n)$; iii) given step i) and step ii), sample from $\text{Binomial}(K^{\ast}_{m},\,B_{\theta/\alpha+k,\,n/\alpha-k})$. If $\alpha=0$, i.e. under the DP, then Monte Carlo sampling of \eqref{post_dp_k} consists of two steps: i) sample the r.v. $K^{\ast}_{m}$ under $P\sim\text{PYP}(0,\theta+n)$; ii) given step i), sample from $\text{Binomial}(K^{\ast}_{m},\,\theta/(\theta+n))$. Sampling from Beta and Binomial distributions is straightforward, for arbitrarily large $n$ and $m$, and routines are available in standard software. Sampling $K^{\ast}_{m}$ is also straightforward, for arbitrarily large $n$ and $m$, and it exploits the predictive probabilities of the PYP \eqref{eq:pred}. In particular, let $\text{Bernoulli}(p)$ be the Bernoulli distribution with parameter $p$, for $p\in(0,1)$. Sampling $K^{\ast}_{m}$ then reduces to sample $m-1$ Bernoulli r.v.s:
\begin{itemize}
\item[1)] Set $k=1$;
\item[2)] For $i=1$ to $m-1$
\begin{itemize}
\item[] Set $b$ to be a sample from $\text{Bernoulli}((\theta+n+\alpha k)/(\theta+n+i))$;
\item[] Set $k=k+b$;
\end{itemize}
\item[3)] Return $k$.
\end{itemize}
Along the same lines, the representations \eqref{eq_id_py_r} and \eqref{eq_id_dp_r} are useful for Monte Carlo sampling from the posterior distribution of $\mathfrak{u}_{r,n,m}$, given $\mathbf{X}_{n}$. In particular, they require to sample the r.v. $M^{\ast}_{r,m}$ rather than the r.v. $K^{\ast}_{m}$. Sampling of $M^{\ast}_{r,m}$ still exploits the predictive probabilities of the PYP although, regrettably, it does not reduce to sample Bernoulli r.v.s.


\section{Coverages of prevalences}\label{sec4}

The estimation of coverages of prevalences, or saturations, may be viewed as the $m$-step ahead generalization of the problem of estimating coverage probabilities. For $m\geq1$ let $\mathbf{X}_{n+m}=(X_{1},\ldots,X_{n},X_{n+1},\ldots,X_{n+m})$ be a random sample under the ``classical framework" for SSPs, of which only the first $n$ elements are assumed to be observed, and denote by $(N_{j,n})_{j\geq1}$ and $(N_{j,m})_{j\geq1}$ the species' frequencies in $\mathbf{X}_{n}$ and $(X_{n+1},\ldots,X_{n+m})$, respectively. The coverage of prevalence of order $r\geq0$ is defined as
\begin{displaymath}
\mathfrak{f}_{r,n,m}=\sum_{j\geq1}I(N_{j,n}=r)I(N_{j,m}>0),
\end{displaymath}
namely the number of species observed $r$ times that would be observed if $m$ additional samples were collected from the same population. Note that $\mathfrak{f}_{0,n,m}$, i.e. the coverage of prevalence of order $0$, is the number of unseen species $\mathfrak{u}_{n,m}$. The estimation of coverages of prevalences first appeared in linguistics to answer the question ``Did Shakespeare write a newly-discovered poem?" \citep{Thi(87)}. The estimated coverages of prevalences was applied to test the consistency of the word usage in a previously unknown poem attributed to Shakespeare with the word usage in the entire Shakespearean canon. Similar questions, though under different vests, appear in biological sciences. In genomics data, they appear in relation to coverage depth, i.e. the average number of reads that are aligned to known reference bases \citep{Den(19)}. Depending on the application, different levels of coverage might be required. This leads to determine whether additional sequencing is needed, which can be motivated by a required minimum coverage threshold, or investigating rare events. The estimation of coverages of prevalences can be valuable in such a setting, as it allows researchers to estimate how many molecules observed with a given frequency would be observed again if the sampling efforts were increased. 

\subsection{A nonparametric estimator of $\mathfrak{f}_{r,n,m}$}

Under  the ``classical framework", \citet{Thi(87)} introduced an estimator of $\mathfrak{f}_{r,n,m}$. In particular, if $\lambda=m/n$ and $M_{r,n}=m_{r}$ denotes the number of distinct species with frequency $r\geq1$ in $\mathbf{X}_{n}$, then the estimator is given by
\begin{equation}\label{est_prev}
\tilde{\mathfrak{f}}_{r,n,m}=\sum_{j\geq1}(-1)^{i+1}\lambda^{i}{r+i\choose i}m_{r+i}.
\end{equation}
For $\lambda=n^{-1}$, the estimator \eqref{est_prev} reduces to the Good-Turing estimator of coverage probability $\mathfrak{p}_{r,n}$. The estimator \eqref{est_prev} is a nonparametric estimator of $\mathfrak{f}_{r,n,m}$, in the sense that it does not rely on any distributional assumption on the unknown $p$. It was obtained in \citet{Thi(87)} through a nonparametric empirical Bayes derivation, though an heuristic derivation by a comparison between the expectations of $\mathfrak{u}_{n,m}$ and $M_{r,n}$ is also possible. The study of provable guarantees of $\tilde{\mathfrak{f}}_{r,n,m}$ has not yet been considered in the literature. \citet{Thi(87)} showed that $\tilde{\mathfrak{f}}_{r,n,m}$ empirically estimates $\mathfrak{f}_{r,n,m}$ for $\lambda<1$, but without provable guarantees. For $\lambda>1$ we expect that $\tilde{\mathfrak{f}}_{r,n,m}$ will suffer the same variance issue of $\tilde{\mathfrak{u}}_{n,m}$, that is the  geometrically increasing magnitude of ${r+i\choose i}\lambda^{i}$ may produce wild oscillations as the number of terms increases. Because of the additional Binomial term ${r+i\choose i}$, it is natural to expect that such a variance issue worsens as $r$ increases. As for the study of its provable guarantees, we expect that the theory developed to study minimax optimality $\tilde{\mathfrak{u}}_{n,m}$ for $\lambda\geq1$ \citep{Wu(16), Wu(19),Pol(20)} is not of a direct applicability to $\tilde{\mathfrak{f}}_{r,n,m}$.

\subsection{BNP inference for coverages of prevalences}\label{sec51}

In the ``BNP framework" \eqref{eq:exchangeable_model}, the problem of estimating the coverages of prevalences is open, and here we cover this gap. Before computing the posterior distribution of $\mathfrak{f}_{r,n,m}$, given $\mathbf{X}_{n}$, it is useful to recall the definition of  (general) hypergeometric distribution \citep[Chapter 6.2.5]{Joh(05)}, as well as the definition of generalized factorial distribution \citep[Chapter 2]{Cha(05)}. For any $u\in\mathbb{N}$ and $b,c>0$, we say that a r.v. $U_{b,c,u}$ on $\{1,\ldots,u\}$ has a generalized factorial distribution if, for $x \in \{1,\ldots,u\}$,
\begin{equation}\label{eq_urn}
\text{Pr}[U_{b,c,u}=x]=\frac{1}{(bc)_{(u)}}\mathscr{C}(u,x;b,0)(c)_{(x)};
\end{equation}
see Section~S1 of the Supplementary Material \citep{BFN(23)} for details. Moreover, for $u,v\in\mathbb{N}$ and $a>0$ such that $a>u$, a r.v. $H_{a,u,v}$ on $\{0,1,\ldots,u\}$ has a (general) hypergeometric distribution if, for $x\in{\{0,1,\ldots,u\}}$ if holds
\begin{equation}\label{gen_hyp}
\text{Pr}[H_{a,u,v}=x]=\frac{{a\choose x}{v\choose u-x}}{{a+v\choose u}}.
\end{equation}
In the next proposition, we show that the posterior distribution of $\mathfrak{f}_{r,n,m}$, given $\mathbf{X}_{n}$, admits a representation in terms of a compound (general) hypergeometric distribution.

\begin{prp}\label{prp2}
Let $\mathbf{X}_{n}$ be a random sample from $P\sim\text{PYP}(\alpha,\theta)$, for $\alpha\in[0,1)$ and $\theta>-\alpha$, such that $\mathbf{X}_{n}$ features $K_{n}=k$ and $\mathbf{N}_{n}=(n_{1},\ldots,n_{k})$. If $M_{r,n}=m_{r}$ is the number of distinct species with frequency $r\geq1$ in $\mathbf{X}_{n}$, then
\begin{equation}\label{eq_profile}
\mathfrak{f}_{r,n,m}\,|\,\mathbf{X}_{n}\,\stackrel{\text{d}}{=}\,m_{r}-\text{H}_{\frac{\theta+n}{r-\alpha}-1,m_{r},U_{r-\alpha,\frac{\theta+n}{r-\alpha},m}}.
\end{equation}
\end{prp}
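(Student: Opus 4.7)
The plan is to first compute $P[\mathfrak{f}_{r,n,m}=y\mid \mathbf{X}_n]$ in closed form via the posterior structure of the PYP, and then to verify algebraic equivalence with the proposed compound hypergeometric representation. Throughout, set $s:=r-\alpha$ and $c:=(\theta+n)/s$.

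For the closed form, I invoke Pitman's quasi-conjugacy for the PYP: conditionally on $\mathbf{X}_n$ one has $P = \sum_{i=1}^k V_i \delta_{S_i^\ast} + R\tilde{P}$ with $(V_1,\ldots,V_k,R)\sim\text{Dirichlet}(n_1-\alpha,\ldots,n_k-\alpha,\theta+k\alpha)$ and $\tilde{P}\sim\text{PYP}(\alpha,\theta+k\alpha)$ independent. Aggregating the Dirichlet weights over the $m_r$ clusters of frequency $r$ yields $\sum_{i:n_i=r}V_i\sim\text{Beta}(m_r s,\theta+n-m_r s)$, so the total count $T_r$ of additional samples landing in these clusters is Beta-Binomial. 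Given $T_r=t$, the allocations among the $m_r$ clusters follow a symmetric Dirichlet-Multinomial with parameter $s$ per cell; a standard inclusion-exclusion (using that any specific $j$ cells receive no mass with probability $((m_r-j)s)_{(t)}/(m_r s)_{(t)}$) gives $P[\mathfrak{f}_{r,n,m}=y\mid T_r=t,\mathbf{X}_n]=\binom{m_r}{y}\sum_{i=0}^{y}(-1)^i\binom{y}{i}((y-i)s)_{(t)}/(m_r s)_{(t)}$. Marginalizing $t$ via the Chu-Vandermonde identity $\sum_t\binom{m}{t}(a)_{(t)}(b)_{(m-t)}=(a+b)_{(m)}$ then produces
\[
P[\mathfrak{f}_{r,n,m}=y\mid\mathbf{X}_n] = \frac{\binom{m_r}{y}}{(\theta+n)_{(m)}}\sum_{i=0}^{y}(-1)^i\binom{y}{i}(\theta+n-(m_r-y+i)s)_{(m)}. \qquad(\star)
\]

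To match $(\star)$ with the proposed representation, I use two identities: (i) the generalized-factorial identity $(sc')_{(m)}=\sum_v\mathscr{C}(m,v;s,0)(c')_{(v)}$ (Supplementary Material \ref{sup_a}), applied with $c'=c-m_r+y-i$ to rewrite each rising factorial in $(\star)$; and (ii) the finite-difference identity $\sum_{i=0}^{y}(-1)^i\binom{y}{i}(A-i)_{(v)}=v(v-1)\cdots(v-y+1)(A)_{(v-y)}$ for $v\geq y$ (vanishing for $v<y$), proved by induction from the one-step relation $(A)_{(v)}-(A-1)_{(v)}=v(A)_{(v-1)}$. Combined, these reduce $(\star)$ to the form $\binom{m_r}{y}(\theta+n)_{(m)}^{-1}\sum_{v=y}^m\mathscr{C}(m,v;s,0)\,v(v-1)\cdots(v-y+1)(c-m_r+y)_{(v-y)}$. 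On the other side, the elementary Gamma-function identity $(c)_{(v)}/(c-m_r+v)_{(m_r)}=(c-m_r)_{(v)}/(c-m_r)_{(m_r)}$ simplifies the product $\binom{c-1}{m_r-y}\binom{v}{y}/\binom{c-1+v}{m_r}\cdot(c)_{(v)}$ to exactly $\binom{m_r}{y}\,v(v-1)\cdots(v-y+1)(c-m_r+y)_{(v-y)}$. Multiplying by $P[V=v]=\mathscr{C}(m,v;s,0)(c)_{(v)}/(\theta+n)_{(m)}$ and summing over $v\in\{1,\ldots,m\}$ then matches $(\star)$ term-by-term.

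The main obstacle is the algebraic verification in the second step: the two sides are encoded in quite different languages (rising factorials with explicit alternating sums versus a compound of the generalized-factorial distribution with a hypergeometric), and orchestrating the finite-difference and rising-factorial identities demands careful bookkeeping. A fully distributional proof identifying $V$ with a natural random variable (for instance, the number of distinct species generated by an auxiliary PYP-like process with formal discount $s=r-\alpha$ and scale $\theta+n$) might be more illuminating, but seems harder to pin down directly, particularly because $r-\alpha\notin[0,1)$ in general so $V$ does not literally correspond to any legitimate PYP sample.
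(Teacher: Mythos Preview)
Your proof is correct and takes a genuinely different route from the paper's. The paper proceeds via moments: it writes $\mathfrak{f}_{r,n,m}=m_r-\mathfrak{d}_{r,n,m}$ with $\mathfrak{d}_{r,n,m}$ counting the frequency-$r$ species \emph{not} re-observed, expands $\E[(\mathfrak{d}_{r,n,m})^d\mid\mathbf{X}_n]$ combinatorially, and invokes the joint ``avoidance'' probability $\text{Pr}[N_{c_1,m}=\cdots=N_{c_x,m}=0\mid\mathbf{X}_n]=(\theta+n-\sum_i n_{c_i}+x\alpha)_{(m)}/(\theta+n)_{(m)}$ (obtained from the conditional formulae of \citet{Fav(13)} via an auxiliary mixing over $V_m$); it then rewrites these moments using the generalized-factorial expansion and recognizes them as the moments of the compound hypergeometric. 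You instead work directly at the pmf level: the explicit posterior Dirichlet structure of the PYP yields $T_r$ Beta--Binomial and, given $T_r$, a symmetric Dirichlet--Multinomial allocation, so inclusion--exclusion plus Chu--Vandermonde produces the closed-form pmf $(\star)$, which you then match term-by-term with the compound hypergeometric via the same generalized-factorial expansion and a finite-difference identity. Your route is more self-contained (it avoids the auxiliary results of \citet{Fav(13)} and the implicit moment-determinacy step) and yields the explicit pmf $(\star)$ as a byproduct; the paper's moment approach dovetails with its broader programme of Stirling-number/factorial-moment manipulations and makes the connection to $\mathfrak{d}_{r,n,m}$ more transparent. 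Both arguments hinge on the same algebraic identity $(sc')_{(m)}=\sum_v\mathscr{C}(m,v;s,0)(c')_{(v)}$, and both cover $\alpha=0$ (yours without needing to pass to a limit).
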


See Section~S4 of the Supplementary Material \citep{BFN(23)} for the proof of Proposition \ref{prp2}. From \eqref{eq_profile}, for fixed $\alpha\in[0,1)$ and $\theta>-\alpha$, $M_{r,n}$ is a sufficient statistic to make inference on $\mathfrak{f}_{r,n,m}$. Under a squared loss function, a BNP estimator of $\mathfrak{f}_{r,n,m}$ is the expected value of \eqref{eq_profile}, i.e.,
\begin{equation}\label{eq_profile_est}
\hat{\mathfrak{f}}_{r,n,m}=\E[\mathfrak{f}_{r,n,m}\,|\,\mathbf{X}_{n}]=m_{r}\left(1-\frac{(\theta+n-r+\alpha)_{(m)}}{(\theta+n)_{(m)}}\right).
\end{equation}
See Section~S5 of the Supplementary Material \citep{BFN(23)} for the proof of Equation \ref{eq_profile_est}. For $m=1$ the estimator \eqref{eq_profile_est} reduces to the estimator \eqref{est_rare} of $\mathfrak{p}_{r,n}$. Interestingly, the estimator \eqref{eq_profile_est} may be interpreted as the proportion
\begin{displaymath}
w_{r,n,m}(\alpha,\theta)=1-\frac{(\theta+n-r+\alpha)_{(m)}}{(\theta+n)_{(m)}}\in(0,1),
\end{displaymath}
of the number $m_{r}$ of distinct species with frequency $r$. Uncertainty quantification of \eqref{eq_profile_est} is obtained by means of credible intervals via Monte Carlo sampling of the posterior distribution \eqref{eq_profile}. Monte Carlo sampling of \eqref{eq_profile} is doable for small values of $n$ and $m$, and it becomes impossible for large $n$ or $m$. This is because for large $n$ and $m$, and even only for large $m$, the computational burden for evaluating the generalized factorial coefficients in the distribution of $U_{r-\alpha,(\theta+n)/(r-\alpha),m}$. To overcome this drawback, we propose an approximation of the posterior distribution \eqref{eq_profile} for large $n$ and $m$. Let $a_{n,m}\simeq b_{n,m}$ mean that $\lim_{n\rightarrow+\infty}\lim_{m\rightarrow+\infty}a_{n,m}/b_{n,m}=1$, namely $a_{n,m}$ and $b_{n,m}$ are asymptotically equivalent as $n$ and $m$ tends to infinity. Then, as  $n,m\rightarrow+\infty$ with $m>0$, for $x\in\{0,1,\ldots,m_{r}\}$
\begin{align}\label{eq:approx_post}
&\text{Pr}[\mathfrak{f}_{r,n,m}=x\,|\,\mathbf{X}_{n}]\\
&\notag\quad\simeq {m_{r}\choose x}\left[1-\left(\frac{n}{n+m}\right)^{r-\alpha}\right]^{x}\left[\left(\frac{n}{n+m}\right)^{r-\alpha}\right]^{m_{r}-x},
\end{align}
and hence
\begin{displaymath}
\hat{\mathfrak{f}}_{r,n,m}\simeq m_{r}\left[1-\left(\frac{n}{n+m}\right)^{r-\alpha}\right].
\end{displaymath}
See Section~S6 of the Supplementary Material \citep{BFN(23)} for the proof of Equation \ref{eq:approx_post}. From \eqref{eq:approx_post}, for large $n$ and $m$ with $m>0$, the posterior distribution \eqref{eq_profile} admits a first order local approximation in terms of a Binomial distribution with parameters $(m_{r},1-(n/(n+m))^{r-\alpha})$, with $m_{r}$ being the number of trials and $1-(n/(n+m))^{r-\alpha}$ being the probability of success at the single trial.

\subsection{Disclosure risk assessment}

We conclude this section with a SSP related to the coverage of prevalence of order $1$, which first appeared in the context of disclosure risk for data confidentiality \citep{Wil(01)}. Consider a microdata sample of $n\geq1$ units (individuals) from a population of $N>n$ units, such that each unit contains identifying and sensitive information. Identifying information consists of categorical variables which might be matchable to known units of the population. A threat of disclosure results from the possibility of identifying an individual through such a matching, and hence disclose its sensitive information. To quantify disclosure risk, microdata units are partitioned according to a categorical variable formed by cross-classifying the identifying variables; that is, units are partitioned into non-empty cells containing individuals with the same combinations of values of identifying variables. Intuitively, a risk of disclosure arises from cells with frequency $1$ since, assuming no errors in matching processes or data sources, for these cells the match is guaranteed to be correct \citep{Bet(90),Ski(94),Ski(02)}. Under the ``classical framework" for SSPs, if microdata units are modeled as a random sample $\mathbf{X}_{N}$ in the classical species sampling framework, and $(N_{j,n})_{j\geq1}$ and $(N_{j,m})_{j\geq1}$ are the frequencies of cells (species) in $\mathbf{X}_{n}$ and $(X_{n+1},\ldots,X_{n+N})$, respectively, then a popular measure of disclosure risk is defined as
\begin{displaymath}
\mathfrak{d}_{n,m}=\sum_{j\geq1}I(N_{j,n}=1)I(N_{j,m}=0),
\end{displaymath}
namely the number of cells with frequency $1$ in the observed sample that are also of frequency $1$ in the whole population. We refer to \citet{Cam(20)} and \citet{Fav(21b)} for the estimation of $\mathfrak{d}_{n,m}$ within the ``classical framework" and in the ``BNP framework", respectively.


\section{Estimation of prior's parameters $(\alpha,\theta)$}\label{sec6}

In practice, a poor assessment of the prior's parameters $(\alpha,\theta)$ may harm any statistical inference based upon the $\mathrm{PYP}(\alpha,\theta)$ prior due to its rigidity. Indeed, many of the estimators derived in the previous sections critically depend on the parameter $\alpha$, and in a less extent on the parameter $\theta$. For instance, let us consider the estimator of the number of unseen species obtained in Section~\ref{sec31}. As a direct consequence of \eqref{asimp_py}, $\mathfrak{u}_{n,m} \sim Zm^{\alpha}$ as $m\to \infty$, for some r.v. $Z$. Hence, a bad choice for the parameter $\alpha$ may lead to a poor prediction. A natural way to add flexibility in the model and improve the inference is to estimate the prior's parameters $(\alpha,\theta)$. Denoting by $\Phi = \{(\alpha,\theta) \in (0,1) \times \Reals \,:\, \theta > -\alpha\}$, in this section we investigate the problem of estimating $(\alpha,\theta)$ over the set $\Phi$.

We start by considering the case where data come from the model, the so-called well-specified case. That is, we assume the ``BNP framework" \eqref{eq:exchangeable_model} for a choice of the prior's parameters $(\alpha,\theta)\in \Phi$. We study the empirical Bayes approach, in which the parameter $(\alpha,\theta)$ is estimated by maximizing the marginal likelihood function, and the hierarchical Bayes approach, where a prior distribution is placed over $\Phi$. We show that, as $n\rightarrow+\infty$, both approaches are consistent with respect to the estimation of $\alpha$. More surprising, we prove that both the approaches are inconsistent with respect to the estimation of $\theta$. These consistency and inconsistency results are also illustrated through simulations in Section~\ref{sec7}. Ultimately, we prove a minimax lower bound for the estimation of $\theta$, which proves that the inconsistency issue is fundamental, and no procedure can estimate $\theta$ with vanishing maximum risk. We also consider a more general point of view than the well-specified case, thus no longer assuming the ``BNP framework" for a choice of the prior's parameters. We consider instead the ``classical framework", namely data are modeled as a random sample from a fixed probability measure $p$ satisfying a regularity assumption enabling meaningful inference. In particular, we show that under this assumption, the sequence $((\hat\alpha_n,\hat\theta_n))_{n\geq 1}$ of marginal maximum likelihood estimators has an interpretable limit $(\alpha_{*},\theta_{*})$. We then establish the asymptotic shape of the sequence of posterior distributions in the hierarchical Bayes model. Our result shows that the sequence of posterior distributions is consistent for $\alpha_{*}$, but inconsistent for $\theta_{*}$. Thereby, we demonstrate that inference made upon the $\mathrm{PYP}$ can be robust to some form of misspecification of the model.

\subsection{Consistency and inconsistency in the well-specified case}
\label{sec:consis:wellspecified}

For the well-specified case, we consider the ``BNP framework" under the assumption that there exists a ``true" parameter $(\alpha_0,\theta_0)\in \Phi$. Specifically, $\mathbf{X}_{n}$ is random sample from $P \sim \mathrm{PYP}(\alpha_0,\theta_0)$. The goal is to analyze the large $n$ asymptotic behaviour of the empirical Bayes and hierarchical Bayes approaches under the assumption that $\mathbf{X}_n$ is distributed as the  the prior predictive distribution, here denoted $P_n^{\alpha_0,\theta_0}(\cdot) = \int P^{\otimes n}(\cdot)\intd \mathrm{PYP}_{\alpha_0,\theta_0}(P)$ in the sequel. For the empirical Bayes approach, the parameter $(\alpha,\theta)$ is estimated using a maximizer of the marginal likelihood function:
\begin{equation*}
    L_n(\alpha,\theta)%
    = n!\frac{\left(\frac{\theta}{\alpha}\right)_{\left(\sum_{i=1}^{n}M_{i,n}\right)}}{(\theta)_{(n)}}\prod_{i=1}^{n}\frac{\left(\frac{\alpha(1-\alpha)_{(i-1)}}{i!}\right)^{M_{i,n}}}{M_{i,n}!},
\end{equation*}
namely the EPSF \eqref{eq_ewe_py} as a function of the parameter $(\alpha,\theta)$ for the observed $\mathbf{M}_{n}=(M_{1,n},\ldots,M_{n,n})$. A maximizer of $L_n$ is known as the marginal maximum likelihood estimator (MMLE). The next theorem characterizes the asymptotic limit (in probability) of $((\hat\alpha_n,\hat\theta_n))_{n\geq 1}$ as $n\to \infty$.

\begin{thm}
\label{thm:wellspecified:mmle}
Assume the random sample to be such that $\mathbf{X}_{n}\sim P^{\alpha_0,\theta_0}_n$. Then, the set $\argmax_{(\alpha,\theta)\in \Phi} L_n(\alpha,\theta)$ is a non-empty set with probability $1 + o(1)$ as $n\to \infty$. Furthermore, $(\hat\alpha_n,\hat\theta_n) \in \argmax_{(\alpha,\theta)\in \Phi} L_n(\alpha,\theta)$ is such that
\begin{equation*}
    \hat\alpha_n = \alpha_0  + o_p(1)
\end{equation*}
and
\begin{equation*}
    \hat\theta_n = Z + o_p(1),
\end{equation*}
where $Z$ is a r.v. related to $S_{\alpha_0,\theta_0}$ via the relation $S_{\alpha_0,\theta_0} = \exp\{\psi(Z/\alpha_0  +1) - \alpha_0\psi(Z+1) \}$. Here, $\psi$ denotes the digamma function; that is the function $\psi$ is the derivative of $\log\Gamma$.
\end{thm}

See Section~S7 of the Supplementary Material \citep{BFN(23)} for the proof of Theorem~\ref{thm:wellspecified:mmle}. The theorem establishes that the sequence of MMLE is consistent to estimate $\alpha_0$. The limit in probability of $(\hat\theta_n)_{n\geq 1}$ is, however, a r.v. This shows that $\theta_0$ cannot be estimated consistently using the MMLE.

We continue our analysis of the well-specified case with the hierarchical Bayes approach. We put a prior distribution $G = G_{\alpha}\otimes G_{\gamma}$ over the
parameter $\alpha \in (0,1)$ and the shifted parameter $\gamma = \theta + \alpha \in (0,\infty)$. Namely, under $G$, the r.v. $\alpha$ and $\gamma$ are independent with respective marginals $G_{\alpha}$ and $G_{\gamma}$. We denote by $\Pi$ the joint distribution of $(\alpha,\gamma,P,X_1,X_2,\dots)$. The posterior distribution of $(\alpha,\gamma)$ given $\mathbf{X}_n$ is denoted $\Pi(\cdot \mid \mathbf{X}_n)$. See that by Bayes' rule:
\begin{equation*}
    \Pi\big( (\alpha,\gamma)\in A \mid \mathbf{X}_n\big)%
    = \frac{\int_A L_n(\alpha,\gamma-\alpha) \intd G(\alpha,\gamma) }{\int_{\mathbb{R}_+^2 } L_n(\alpha,\gamma-\alpha) \intd G(\alpha,\gamma)}.
\end{equation*}

\begin{thm}
  \label{thm:consistency-wellspecified-bayes}
 Assume the random sample to be such that $\mathbf{X}_{n}\sim P^{\alpha_0,\theta_0}_n$. Furthermore, assume that $G_{\alpha}$ and $G_{\gamma}$ have continuous and positive densities. Then for every $\varepsilon > 0$
  \begin{equation*}
    \Pi\big( |\alpha - \alpha_0| > \varepsilon \mid \mathbf{X}_n \big)%
    = o_p(1).
  \end{equation*}
  Moreover, there exists a r.v. $W > 0$ such that the following holds:
  \begin{equation*}
    \Pi\big(\theta \in [\theta_0 - W,\theta_0 + W]  \mid \mathbf{X}_n\big) \leq \frac{1}{2}  + o_p(1).
  \end{equation*}
\end{thm}

See Section~S8 of the Supplementary Material \citep{BFN(23)} for the proof of Theorem~\ref{thm:consistency-wellspecified-bayes}. The theorem establishes the consistency of the posterior near the ``true" $\alpha_0$, that is the posterior will eventually put all its mass on a small neighborhood of $\alpha_0$ when the sample size $n$ gets large enough. The posterior is however inconsistent at $\theta_0$: regardless of how large $n$ is taken the posterior will put mass outside of a neighborhood of fixed size [see also Section~\ref{sec7} and Figure~\ref{fig:par_estimation} for a numerical illustration]. Indeed, as demonstrated later in Theorem~\ref{thm:4sc-jzwh-mc1} the posterior distribution for $\gamma$ (equivalently $\theta$) depends on the choice of  $G_{\gamma}$ even in the asymptotic limit. Therefore, in the absence of strong prior belief on $\theta$ it is unclear if a hierarchical Bayes approach to estimate this parameter is meaningful.

Theorem~\ref{thm:wellspecified:mmle} and Theorem~\ref{thm:consistency-wellspecified-bayes} establish that the prior's parameter $\theta$ cannot be estimated consistently using the MMLE or the hierarchical Bayes approach. We conclude this section by demonstrating that the issue is more fundamental and no estimator can estimate $\theta$ with vanishing maximum risk.

\begin{thm}
    \label{thm:lecamtheta}
    For all $\alpha \in (0,1)$, for all $n \geq 1$, and for all $t > -\alpha$
    \begin{equation*}
        \inf_{\hat\theta_n}\sup_{-\alpha < \theta \leq t}\EE_{(\alpha,\theta)}\big( (\hat{\theta}_n(\mathbf{X}_n) - \theta)^2 \big)%
        \geq \frac{\min\big((t+\alpha)^2,\,t+\alpha \big)}{64}.
    \end{equation*}
    where the infimum is understood over all measurable functions of $\mathbf{X}_n$.
\end{thm}

See Section~S9 of the Supplementary Material \citep{BFN(23)} for the proof of Theorem~\ref{thm:lecamtheta}. The theorem shows that even with the knowledge of the prior's parameter $\alpha$, it is impossible for an estimator of $\theta$ to have a vanishing maximum risk over $(-\alpha,t]$ as $n\to \infty$. It also establishes that the minimax risk over $\theta \in (-\alpha,\infty)$ is infinite.

\subsection{Consistency and inconsistency in the misspecified case}
\label{sec:consis:misspecified}

Differently from the well-specified case, for the misspecified case we do not necessarily assume that $\mathbf{X}_n$ is a random sample from $P \sim \mathrm{PYP}(\alpha_0,\theta_0)$. Instead, we consider the ``classical framework", namely $\mathbf{X}_{n}$ is a random sample from a ``true" (fixed)  distribution $p$. To guarantee the existence of meaningful limits for our estimators, we require a moderate misspecification, as stated in the next assumption.

\begin{assumption}
  \label{ass:ucj-yl04-iyy}
  The distribution $p$ is discrete, such that $p = \sum_{j\geq 1}p_j \delta_{s_j}$. Furthermore, by defining the quantity
  $$\bar{F}_p(x) = \sum_{j\geq 1}I(p_j > x),$$ 
  there exist $L > 0$ and $\alpha_* \in (0,1)$ such that as $x\to 0$ it holds
  \begin{equation*}
    \bar{F}_p(x) = Lx^{-\alpha_{*}} + o\Big[\frac{1}{-x^{\alpha_*}\log (x)} \Big].
  \end{equation*}
\end{assumption}

The Assumption~\ref{ass:ucj-yl04-iyy} is motivated by the fact that if $P \sim \text{PYP}(\alpha,\theta)$, then results in \cite{Pit(03)} show that $\lim_{x \to 0}x^{\alpha}\bar{F}_P(x) = S_{\alpha,\theta}/\Gamma(1-\alpha)$ almost surely, with $\Gamma$ being the Gamma function \citep[Chapter 3 and Chapter 4]{Pit(06)}. We strengthen those results in Section~S12 of the Supplementary Material \citep{BFN(23)} and we establish a law of the iterated logarithm (LIL) as $x\to 0$ for $\bar{F}_P(x) - S_{\alpha,\theta}x^{-\alpha}/\Gamma(1-\alpha)$ when $P \sim \mathrm{PYP}(\alpha,\theta)$. In particular, the LIL implies that $P\sim \mathrm{PYP}(\alpha,\theta)$ satisfies almost-surely the Assumption~\ref{ass:ucj-yl04-iyy} with $\alpha_{*} = \alpha$ and $L = S_{\alpha,\theta}/\Gamma(1-\alpha)$. The Assumption~\ref{ass:ucj-yl04-iyy} is, however, much more general and allow for many more probability distributions. In the next theorem, we characterize the asymptotic limit of the sequence of MMLEs when the data $\mathbf{X}_n$ is independent from a distribution $p$ satisfying Assumption~\ref{ass:ucj-yl04-iyy}.

\begin{thm}
  \label{thm:misspecified:mle}
  
 Assume $\mathbf{X}_{n}$ to be a random sample from $p$, and assume that the distribution $p$ satisfies Assumption~\ref{ass:ucj-yl04-iyy}. Then the set $\argmax_{(\alpha,\theta)\in \Phi}L_n(\alpha,\theta)$ is a non-empty set with probability $1 + o(1)$ as $n\to \infty$. Furthermore, $(\hat{\alpha}_n,\hat{\theta}_{n}) \in \argmax_{(\alpha,\theta)\in \Phi}L_n(\alpha,\theta)$ is such that
  \begin{equation*}
    \hat{\alpha}_n = \alpha_{*} + o_p(1)
  \end{equation*}
  and
    \begin{equation*}
    \hat{\theta}_n = \theta_{*} + o_p(1),
  \end{equation*}
  with $\theta_{*}$ defined through
  \begin{equation*}
    L = \frac{\exp\{\psi(\theta_{*}/\alpha_{*} + 1) - \alpha_{*}\psi(\theta_{*}+1) \} }{\Gamma(1-\alpha_{*})}.
  \end{equation*}
  Here, $\psi$ is the digamma function; that is $\psi$ is the derivative of $\log\Gamma$.
\end{thm}

See Section~S10 of the Supplementary Material \citep{BFN(23)} for the proof of Theorem~\ref{thm:misspecified:mle}. Interestingly, Theorem~\ref{thm:misspecified:mle} establishes that if the
misspecification is moderate, then $(\alpha_{*},\theta_{*})$ can be directly interpreted in terms of key functionals of the ``true" data generative mechanism, that is in terms of $\alpha_{*}$ and $L$. This also shed some lights in the meaning of the parameter $(\alpha,\theta)$ when the model is correct. In the next theorem, we consider the hierarchical Bayes modeling and we characterize the asymptotic shape of the sequence of posterior distributions as $n\to \infty$.

\begin{thm}
  \label{thm:4sc-jzwh-mc1}
  Let $G = G_{\alpha} \otimes G_{\gamma}$ be a (proper or improper) prior distribution over
  the parameter $(\alpha,\gamma)$ such that $G_{\alpha}$ (respectively $G_{\gamma}$) has a density $g_{\alpha}$ (resp.
  $g_{\gamma}$) with respect to Lebesgue measure which is positive in a neighborhood
  of $\alpha_{*}$ (resp. $\gamma_{*} = \theta_{*} + \alpha_{*}$). Furthermore assume that $G_{\gamma}$ is
  such that $\int_0^{\infty}\frac{[L\Gamma(1-\alpha_{*})]^{z/\alpha_{*}}\Gamma(1-\alpha_{*} + z)}{\Gamma(z/\alpha_{*})}G_{\gamma}(\intd z) < \infty$\footnote{This is not a strong restriction since it allows to use all proper priors, as well as many improper priors.} and define a probability distribution $H_{*}$ on
  $(0,\infty)$ through
  \begin{equation*}
    H_{*}(A)%
    = \frac{\int_A \frac{[L\Gamma(1-\alpha_{*})]^{z/\alpha_{*}}\Gamma(1-\alpha_{*} + z)}{\Gamma(z/\alpha_{*})} G_{\gamma}(\intd z) }{\int_0^{\infty}\frac{[L\Gamma(1-\alpha_{*})]^{z/\alpha_{*}}\Gamma(1-\alpha_{*} + z)}{\Gamma(z/\alpha_{*})}G_{\gamma}(\intd z)}.
  \end{equation*}
  Then under Assumption~\ref{ass:ucj-yl04-iyy}, the posterior distribution of
  $(\alpha,\gamma)$ given $\mathbf{X}_n$, written here $\Pi(\cdot \mid \mathbf{X}_n)$, satisfies as $n \to \infty$
  \begin{multline*}
    \sup_{A,B}\Big|\Pi( \hat{V}_n^{1/2}(\alpha - \hat{\alpha}_n) \in A,\, \gamma \in B \mid \mathbf{X}_n  ) \\- \phi(A)H_{*}(B) \Big|%
    = o_p(1),
  \end{multline*}
  where the supremum is taken over all measurable sets, $\hat{V}_n = -\partial_{\alpha}^2\log L_n(\hat{\alpha}_n^0,0)$, and $\phi(A)$ is the
  probability that a standard normal r.v. lies in $A$.
\end{thm}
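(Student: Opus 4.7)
The plan is to proceed by a Laplace-type argument based on the decomposition
\[
\ell_n(\alpha,\theta) \;=\; \ell_n(\alpha,0) \;+\; r_n(\alpha,\theta),
\]
in which the first summand is strictly concave in $\alpha$ and maximised at $\hat\alpha_n^0$ by Theorem~\ref{thm:9cr-dlcr-2yc}, while the second, $r_n(\alpha,\theta) = \log[(\theta/\alpha)_{(k)}/(\theta)_{(n)}]$ up to a $\theta$-free constant, carries all the dependence on $\theta$. Writing the posterior of $(\alpha,\gamma)$ via Bayes' rule as proportional to $g_\alpha(\alpha)g_\gamma(\gamma)\exp\{\ell_n(\alpha,0) + r_n(\alpha,\gamma-\alpha)\}$, changing variables to $u = \hat V_n^{1/2}(\alpha - \hat\alpha_n)$, and using Scheff\'e's lemma, I would show that the resulting joint density of $(u,\gamma)$ converges in total variation to $\phi(u)\,du \otimes H_{*}(d\gamma)$.

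For the $\alpha$-direction, strict concavity of $\ell_n(\cdot,0)$ with curvature $\hat V_n$ at $\hat\alpha_n^0$ yields the Laplace expansion $\ell_n(\alpha,0) = \ell_n(\hat\alpha_n^0,0) - \tfrac12 \hat V_n(\alpha - \hat\alpha_n^0)^2 + o_p(1)$, uniformly for $\alpha$ in a $\hat V_n^{-1/2}M_n$-ball around $\hat\alpha_n^0$ with $M_n \to \infty$ slowly. Because $\hat\alpha_n - \hat\alpha_n^0 = O_p(\log n / n^{\alpha_{*}}) = o_p(\hat V_n^{-1/2})$ by Theorem~\ref{thm:9cr-dlcr-2yc}, re-centring at $\hat\alpha_n$ only changes the Gaussian by a $1+o_p(1)$ factor; tail control for $|\alpha - \hat\alpha_n|$ outside this window follows from the growing quadratic lower bound implied by concavity, yielding negligible posterior mass.

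For the $\gamma$-direction, I would rewrite the rising factorials as Gamma ratios and apply $\log\Gamma(x+a)-\log\Gamma(x) = a\log x + O(1/x)$ at $x=K_n$ and $x=n$. Using the Karlin-type consequence of Assumption~\ref{ass:ucj-yl04-iyy} that $\log K_n = \alpha_{*}\log n + \log[L\Gamma(1-\alpha_{*})] + o_p(1)$, the two $\gamma$-linear terms of order $\log n$ cancel after inserting $\alpha = \hat\alpha_n$ and exploiting $|\hat\alpha_n - \alpha_{*}| = o_p(1/\log n)$. Simplifying the surviving Gamma factors via $\Gamma(x+1) = x\Gamma(x)$ converts $\Gamma(\gamma/\alpha_{*} - 1)$ and $\Gamma(\gamma - \alpha_{*})$ into the forms appearing in $H_{*}$, so that
\[
r_n(\hat\alpha_n,\gamma - \hat\alpha_n) \;=\; \log h_{*}(\gamma) \;+\; c_n \;+\; o_p(1),
\]
where $c_n$ does not depend on $\gamma$ and $h_{*}(\gamma) = [L\Gamma(1-\alpha_{*})]^{\gamma/\alpha_{*}}\Gamma(1-\alpha_{*}+\gamma)/\Gamma(\gamma/\alpha_{*})$ is exactly the likelihood-factor defining $H_{*}$ before multiplication by $g_\gamma$.

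Combining the two expansions, the posterior density of $(u,\gamma)$ restricted to $\{|u| \le M_n,\, \gamma \in K\}$ for a large compact $K$ equals, up to $1+o_p(1)$, a constant times $g_\alpha(\alpha_{*})\,e^{-u^2/2}\,g_\gamma(\gamma)\,h_{*}(\gamma)$, which factorises and identifies the limit as $\phi \otimes H_{*}$; tails in $\gamma$ outside $K$ are controlled by the integrability condition $\int h_{*}\, dG_\gamma < \infty$ stated in the theorem. In my view, the main obstacle is not the pointwise expansion of $r_n$---a direct calculation---but the \emph{uniform} control of $r_n(\alpha,\gamma-\alpha)$ over the relevant window: the expansion contains a term of order $\gamma(\alpha_{*}-\alpha)\alpha^{-1}\log n$, which is $o_p(1)$ only because $|\alpha-\alpha_{*}| = o_p(1/\log n)$ holds uniformly on the $\alpha$-window (a consequence of combining Theorem~\ref{thm:9cr-dlcr-2yc} with the posterior concentration rate $\hat V_n^{-1/2}\asymp n^{-\alpha_{*}/2}$). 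Propagating this uniformity through all of the $\Gamma$-function expansions, while simultaneously handling the non-concentrating $\gamma$-direction, is the delicate part of the argument.
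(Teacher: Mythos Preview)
Your proposal follows essentially the same route as the paper: factor $\ell_n(\alpha,\theta)=\ell_n(\alpha,0)+r_n(\alpha,\theta)$, treat the first summand by a quadratic Laplace expansion around $\hat\alpha_n^0$, and reduce the second to $\log h_*(\gamma)$ via Stirling and the asymptotics $K_n/n^{\alpha_*}\to L\Gamma(1-\alpha_*)$. The paper packages this as an explicit three--term decomposition $R_1+R_2+R_3$ over truncation windows $A_n\times B_n$ rather than through Scheff\'e, but the analytic content is the same; in particular your identification of the cross term $\gamma(\alpha_*-\alpha)\alpha^{-1}\log n$ as the place where the rate $|\hat\alpha_n-\alpha_*|=o_p(1/\log n)$ is actually consumed matches the paper's key step in bounding $R_1$.

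There is, however, one point you are underestimating. You write that ``tails in $\gamma$ outside $K$ are controlled by the integrability condition $\int h_*\,dG_\gamma<\infty$''. That condition controls the tails of the \emph{limit} $H_*$, not the tails of the \emph{posterior}. To make a Scheff\'e (or dominated--convergence) argument close, you must show that $G(\gamma\notin K\mid\Pi_n)$ is small, which requires genuine upper bounds on $\ell_n(\alpha,\gamma-\alpha)$ for $\gamma$ large (of order $\log n$ and beyond) and for $\gamma$ near $0$. These bounds are not a consequence of the pointwise expansion of $r_n$ on compacts; they need a separate analysis (Stirling in the large--$\gamma$ regime, the pole of $\log\Gamma(\gamma/\alpha)$ near $\gamma=0$), and this is where the paper spends most of its effort in handling $R_2$, reusing the likelihood bounds developed for item~\ref{item:6x9-2jie-dzi} of Theorem~\ref{thm:9cr-dlcr-2yc}. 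Without this step, pointwise ratio convergence of the unnormalized densities does not yield total--variation convergence of the normalized ones, so your tail argument as stated is a gap, though one that is fillable with the same ingredients the paper provides.
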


See Section~S11 of the Supplementary Material \citep{BFN(23)} for the proof of Theorem~\ref{thm:4sc-jzwh-mc1}. The theorem establishes that $(\alpha,\gamma)$  are asymptotically independent a posteriori. We see that the limiting marginal distribution for $\gamma$ is neither converging toward a point mass, nor even Gaussian. In contrast with the MMLE, the posterior distribution for $\gamma$ is difficult to interpret since it depends on the prior $G_{\gamma}$. For this reason, we shall be careful with posterior analysis involving $\gamma$ (equivalently $\theta$). The limiting marginal distribution for $\hat{V}_n^{1/2}(\alpha - \hat{\alpha}_n)$ is however Gaussian, which can be seen as a weak form of Bernstein-von Mises' (BvM) theorem. It is not a true BvM because the centering is taken at $\hat{\alpha}_n$ instead of $\alpha_{*}$ and the scaling is the ``empirical'' Fisher information instead of Fisher's information. Finally, let us mention that after writing the first draft of this paper, we have been made aware that \cite{Fra(22)} have independently obtained results similar to those presented in this section. Under an assumption resembling Assumption~\ref{ass:ucj-yl04-iyy},  \cite{Fra(22)} are able to establish a more precise asymptotic for the estimation of $\alpha$. They, however, obtain less precise results regarding the estimation of the parameter $\theta$.


\section{Numerical illustrations}\label{sec7}

In this section, we provide a numerical illustration of the prior parameters estimation and of the inference on the presented species sampling problems, using synthetic data.

We first compare the empirical Bayes approach and the fully Bayes approach by  using several prior distributions, and we demonstrate how the effect of the estimation method changes with increasing sample sizes. In particular, we compare different prior distributions on $\alpha$ and $\theta$, constructed using different combination of non-informative and (mis-specified) informative priors: (a) a non-informative prior for both parameters; (b) an informative prior for both parameters; (c) an informative prior for only $\theta$ and non-informative for $\alpha$; (d) an informative prior for only $\alpha$ and non-informative for $\theta$. See Section~S13 of the Supplementary Materials \citep{BFN(23)} for additional details.
Figure~\ref{fig:par_estimation} displays the mean error across several simulated datasets, for the different estimation methods as a function of the increasing sample size. 
While the mis-specified informative priors on $\alpha$ have a detrimental effect on the estimation, this effect gets less strong with the increasing sample size. On the contrary, the effect of mis-specified priors on $\theta$ remains large even with large sample size. Moreover, Figure~\ref{fig:par_estimation} shows that while the relative error for $\alpha$ vanishes with increasing sample size for all the estimation methods, the error for $\theta$ does not. This is a clear illustration of the findings in Section~\ref{sec:consis:wellspecified}. We then demonstrate the inference and prediction of the various SSP presented in this work, using synthetic data generated from both from the PYP and from a power law Zipf distribution. We compare the estimation and prediction under the Empirical Bayes (EB) approach, and the Full Bayes (FB) approach with non-informative priors on both $\theta$ and $\alpha$, and informative prior for both parameters.

\begin{figure}
\includegraphics[width=0.45\textwidth]{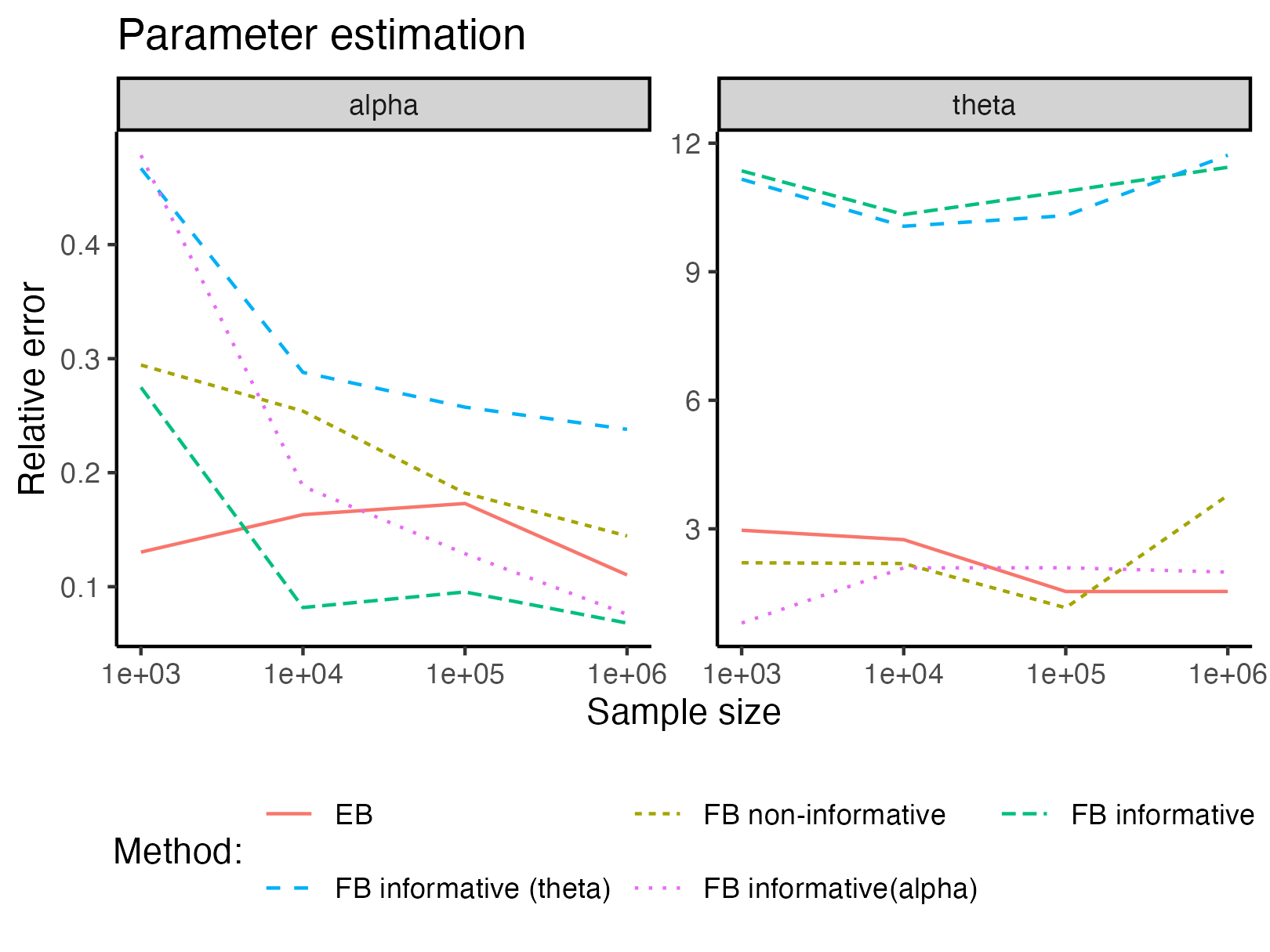}\\
\centering
\caption{Mean absolute relative (percentage) error for estimation of $\alpha$ and $\theta$ under the Empirical Bayes (EB) approach and the Full Bayes (FB) approach with several prior distributions. \label{fig:par_estimation}}
\end{figure}

We first analyze the SSPs that depend only on the initial sample and on the ``true" distribution $P$: the missing mass and the coverage probability $\mathfrak{p}_{r,n}$ for $r=1$. Figure~S1 of the Supplementary Materials \citep{BFN(23)} shows the relative (percentage) error for these functionals across several synthetic datasets. The median absolute percentage errors are less than $7\%$ for the missing mass and the coverage probability for the PY-generated data, and less than $12\%$ for the Zipf-generated data, demonstrating good recovery of the true functionals. 
We then study the ``predictive'' SSP (the ones that depend on an additional sample): the number of unseen species, the unseen prevalence $\mathfrak{u}_{r,n,m}$ for $r=1$ and the coverage of prevalence $\mathfrak{f}_{r,n,m}$ for $r=1$. The median absolute percentage error (across methods and additional sample sizes) is less then $30\%$ for the number of unseen and the unseen prevalence, and $7\%$ for the coverage of prevalences, when the data is generated from a PYP. 
Figure~\ref{fig:PYprediction} displays the predicted and actual value for a synthetic dataset that can be thought as ``representative'', as it achieved the median error across the several generated datasets. For all these SSP, the prediction is somewhat accurate, and most often the credible intervals contain the ``true" value of the functional of interest. 
When the data was generated from a power-law Zipf distributios, the predictive performance gets worse, with median errors less then $40\%$ for the number of unseen and the unseen prevalence and $60\%$ for the coverage of prevalences. We refer to Section~S13 of the Supplementary Materials \citep{BFN(23)} for more details. 

\begin{figure}
\includegraphics[width=0.45\textwidth]{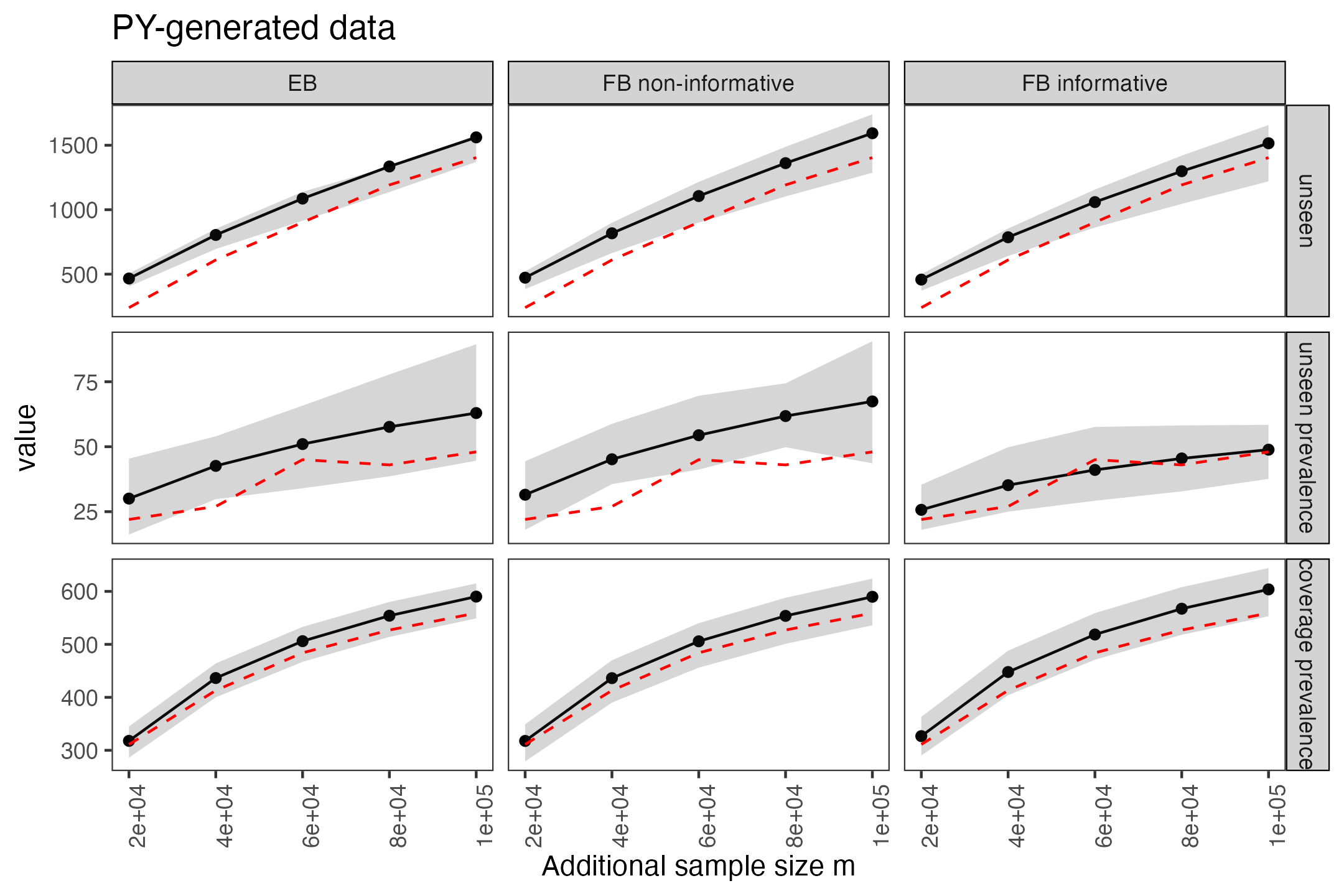}\\
\centering
\caption{Representative examples of the predicted value (black solid line) and actual value (red dashed line) for three SSP (the number of unseen $\mathfrak{u}_{n,m}$, unseen prevalence $\mathfrak{u}_{1,n,m}$, and coverage of prevalence $\mathfrak{f}_{1,n,m}$) as a function of the additional sample size $m$, for three parameter estimating method (empirical Bayes (EB), fully Bayes (FB) with non-informative priors, FB with informative priors). The gray bands represent the $95\%$ credible intervals. \label{fig:PYprediction}}
\end{figure}


\section{Some generalizations of SSPs}\label{sec8}

``Feature-sampling" problems (FSPs) generalize SSPs by allowing an individual in the population to belong to more than one species, which are referred to as features. To introduce the class of FSPs, we consider a population of individuals such that each individual is endowed with a finite set of features' labels belonging to a (possibly infinite) space of features. The ``classical framework" for FSPs assumes that $n\geq1$ observed samples from the population are modeled as a random sample $(Y_{1},\ldots,Y_{n})$, where $Y_{i}=(Y_{i,j})_{j\geq1}$ is a sequence of  independent Bernoulli r.v.s with unknown feature probabilities $(p_{j})_{j\geq1}$, such that $Y_{r}$ is independent of $Y_{s}$ for any $r\neq s$. In principle, each of SSPs discussed in this paper admits a corresponding feature sampling counterpart.  Within the broad class of FSPs, in recent years there has been a growing interest, especially biological sciences, in the estimation of
\begin{equation}\label{uns_fea}
\sum_{j\geq1}I\left(\sum_{i=1}^{n}Y_{i,j}=0\right)I\left(\sum_{i=1}^{m}Y_{n+i,j}>0\right),
\end{equation}
namely the number of hitherto unseen features that would be observed if $m$ additional samples $(Y_{n+1},\ldots,Y_{n+m})$ were collected from the same $(p_{j})_{j\geq1}$. We refer to \citet{Ion(09)}, \citet{Gra(14)}, \citet{Zou(16)}, \citet{Orl(17)} \citet{Cha(19)} for parametric and nonparametric approaches to estimate \eqref{uns_fea}. The FSP \eqref{uns_fea} provides the natural  feature sampling counterpart of the problem of the unseen-species problem; we refer to \cite{Fad(21)} for classical (frequentist) nonparametric inference of a feature sampling counterpart of the problem of estimating the missing mass. Recently \citet{Mas(20)} and \citet{Mas(20a)} proposed a BNP approach to estimate \eqref{uns_fea}, which rely on placing suitable prior distributions on the underlying probabilities $(p_{j})_{j\geq1}$. See \citet{BerF(23)} for a more general setting. Despite these works, BNP inference for FSPs remains still a mostly unexplored field for both methods and applications. See, e.g., \citet{Ber(23)} and \citet{Mas(23)}.

SSPs may be generalized to multiple populations of individuals, in such a way that populations share species. Consider $r>1$ populations of individuals, such that each individual is labeled by a symbol or species' label belonging to a (possibly infinite) space of symbols. That is, species' labels are shared among the populations. Following the ``classical framework" for SSPs, it is assumed that  $r$ observed samples of individuals from the populations, the $i$-th sample being of size $n_{i}$, are modeled as a random sample $\{(X_{i,1},\ldots,X_{i,n_{i}})\}_{i=1,\ldots,r}$ from a collection of $r$ unknown distributions $(p_{1},\ldots,p_{r})$. Then, interest is in estimating discrete functionals that encode features of additional unobserved samples from the same populations; of special interest are functionals encoding information of the number of shared species among populations. In recent years, SSPs with multiple populations have become critical in microbiome studies, i.e. microbial ecology and biology, where next generation sequencing has been applied to obtain inventories of bacteria in many different environments (populations); see \citet{Jeg(21)} and references therein. Nonparametric inference for SSPs with multiple populations pose challenging mathematical hurdles to overcome \citep{Rag(17),Hao(20b)}. In particular, the BNP approach requires to place a nonparametric prior on the underlying collection of distributions $(p_{1},\ldots,p_{r})$, in such a way to model the unknown species compositions of the populations and the dependency among these compositions. Hierarchical priors \citep{Teh(06),Cam(19)} and compound priors \citep{Gri(17)} provide a broad class of priors for $(p_{1},\ldots,p_{r})$, being mathematically tractable and flexible in terms of prior's parameters. However, to date, BNP inference for SSPs in multiple populations is mostly unexplored, being difficult to obtain posterior inferences that are analytically tractable and, most importantly, computationally efficient in applications.

In the ``classical framework" for SSPs, the observed samples are modeled as a random sample $\mathbf{X}_{n}$ from an unknown distribution $p$. The assumption of independence among the $X_{i}$'s is unrealistic in many applications, though it yields results that are interesting in themselves, and upon which more sophisticated frameworks may be built. For instance, in a natural languages the probability of appearance of a word strongly depends on the previous words, both for grammatical and semantic reasons. Likewise, the nucleotides in a DNA sequence do not form a random sample. There has been a recent interest in estimating the missing mass and coverage probabilities when observed samples are modeled as Markov chains \citep{Asa(14), Fal(16), Hao(18), Wol(19), Sko(20a), Cha(21),Pan(24)}. \citet{Bac(13)} first considered BNP analysis of SSPs under the assumption that the observed samples are modeled as a reversible Markov chain. This is motivated by the analysis of benchtop and computer experiments that produce data associated with the structural fluctuations of a protein in water, with species being protein conformational states \citep{Pan(10)}. \citet{Bac(13)} introduced a nonparametric prior for the unknown transition kernel of a reversible Markov chain, such that: i) the state space of the chain is uncountable; ii) the prediction for the next state visited by the chain is not solely a function of the number of transitions observed in and out of the last state, but transition probabilities out of different states share statistical strength. While the model of \citet{Bac(13)} can be used to predict characteristics of future unobserved trajectories of reversible Markov chains, i.e. protein dynamics, the major goals of the their paper were: i) predicting how soon the chain will return to a specific state of interest; ii) predicting the number of states that the  chain has not yet visited in the first (observed) transitions and that will appear in subsequent (unobserved) transitions. 


\section*{Acknowledgements}

The authors are grateful to the Editor (Professor Sonia Petrone), the Associate Editor and three Referees for their comments and corrections that allow to improve remarkably the paper. Stefano Favaro wishes to thank Timothy P. Daley for useful discussions on applications of species sampling problems in biological and physical sciences. Cecilia Balocchi, Stefano Favaro and Zacharie Naulet received funding from the European Research Council (ERC) under the European Union's Horizon 2020 research and innovation programme under grant agreement No 817257. Stefano Favaro gratefully acknowledge the support from the Italian Ministry of Education, University and Research (MIUR), ``Dipartimenti di Eccellenza" grant 2018-2022.


\pagebreak
\begin{flushleft}
\textbf{\Huge Supplemental Materials for \\ 
Bayesian nonparametric inference for ``species-sampling" problems}\\

\vspace{0.3cm}

\textbf{\Large Cecilia Balocchi, Stefano Favaro, Zacharie Naulet}
\end{flushleft}
\vspace{0.6cm}

\thispagestyle{empty}
\setcounter{equation}{0}
\setcounter{figure}{0}
\setcounter{table}{0}
\setcounter{page}{1}
\setcounter{section}{0}
\makeatletter
\renewcommand{\theequation}{S\arabic{equation}}
\renewcommand{\thesection}{S\arabic{section}}
\renewcommand{\thefigure}{S\arabic{figure}}
\renewcommand{\bibnumfmt}[1]{[S#1]}
\renewcommand{\citenumfont}[1]{S#1}

\numberwithin{equation}{section}

\numberwithin{equation}{section}

\section{Combinatorial numbers}\label{sup_a}
We recall some basic identities on factorial numbers, Stirling numbers, generalized factorial coefficients and generalizations thereof, which will be critical to prove the results of the present paper. We refer to Charalambides (2005) for a comprehensive account on these combinatorial numbers. For $t\in\mathbb{R}^{+}$ and $u\in\mathbb{N}_{0}$, let $(t)_{(u)}=\prod_{0\leq i\leq u-1}(t+i)$ be the rising factorial of $t$ of order $u$. For $t>0$, the $(u,v)$-th signless Stirling number of the first type, denoted by $|s(u,v)|$, is the defined as the $v$-th coefficient in the expansion of $(t)_{(u)}$ into powers, i.e., 
\begin{displaymath}
(t)_{(u)}=\sum_{v=0}^{u}|s(u,v)|t^{v}.
\end{displaymath}
For $t\in\mathbb{R}^{+}$ and $u\in\mathbb{N}_{0}$, let $(t)_{[u]}=\prod_{0\leq i\leq u-1}(t-i)$ be the falling factorial of $t$ of order $u$, such that it  holds $(t)_{(u)}=(-1)^{u}(-t)_{[u]}$. The $(u,v)$-th Stirling number of the second type, denoted by $S(u,v)$, is the defined as the $v$-th coefficient in the expansion of $t^{u}$ into falling factorials, i.e., 
\begin{displaymath}
t^{u}=\sum_{v=0}^{u}S(u,v)(t)_{[v]}.
\end{displaymath}
It is assumed: $|s(0,0)|=S(0,0)=1$, $|s(u,0)|=S(u,0)=0$ for $u>0$ and $|s(u,v)|=S(u,v)=0$ for $v>u$. An generalization of Stirling numbers is given by the coefficients of the expansion of non-centered rising factorials into powers and of powers into non-centered falling factorials, i.e., 
\begin{displaymath}
(t+b)_{(u)}=\sum_{v=0}^{u}|s(u,v;b)|t^{v}
\end{displaymath}
and
\begin{displaymath}
t^{u}=\sum_{v=0}^{u}S(u,v;b)(t-b)_{[v]}.
\end{displaymath}
It is assumed: $|s(0,0;b)|=S(0,0;b)=1$, $|s(u,0;b)|=(b)_{(u)}$ for $u>0$, $S(u,0;b)=b^{u}$ for $u>0$ and $|s(u,v;b)|=S(u,v;b)=0$ for $v>u$. The coefficients $|s(u,v;b)|$ and $S(u,v;b)$ are typically referred to as signless non-centered Stirling number of the first type and non-centered Stirling number of the second type, respectively. An explicit expression for the non-centered Stirling number of the second type can be deduced by applying its definition, that is it holds
\begin{displaymath}
S(u,v;b)=\frac{1}{v!}\sum_{j=0}^{v}(-1)^{v-j}{v\choose j}(j+b)^{u}.
\end{displaymath}
The following convolutional identities show some useful relationships between Stirling numbers of the first and of the second type and their corresponding non-centered Stirling numbers, i.e., 
\begin{itemize}
\item[i)]
\begin{displaymath}
|s(u,v;b)|=\sum_{j=v}^{u}{j\choose v}b^{j-v}|s(u,j)|=\sum_{j=v}^{u}{u\choose j}(b)_{(u-j)}|s(j,v)|;
\end{displaymath}
\item[ii)]
\begin{equation}\label{eq:stir2}
S(u,v;b)=\sum_{j=v}^{u}{u\choose j}b^{u-j}S(j,v)=\sum_{j=v}^{u}{j\choose v}(b)_{[j-v]}S(u,j).
\end{equation}
\end{itemize}
A variation of the convolutional identity \eqref{eq:stir2}, which will be useful in our context, is the following
\begin{equation}\label{eq:stir1}
S(u,v;b)=\sum_{j=v}^{u}(-1)^{u-j}{j\choose v}S(u,j)(b+v)_{(j-v)}.
\end{equation}
For $t\in\mathbb{R}^{+}$, $a\in\mathbb{R}$ and $u\in\mathbb{N}_{0}$, let us consider the rising factorial of $at$ of order $u$, i.e., $(at)_{(u)}=\prod_{0\leq i\leq u-1}(at+i)$. The $(u,v)$-th (centered) generalized factorial coefficient, denoted by $\mathscr{C}(u,v;a)$, is defined as the  $v$-th coefficient in the expansion of $(at)_{(u)}$ into rising factorials, i.e.,
\begin{equation}\label{eq:genfact3}
(at)_{(u)}=\sum_{v=0}^{u}\mathscr{C}(u,v;a)(t)_{(v)}.
\end{equation}
It is assumed: $\mathscr{C}(0,0;a)=1$, $\mathscr{C}(u,0;a)=0$ for $u>0$, $\mathscr{C}(u,v;a)=0$ for $v>u$. Similarly to the non-centered Stirling number, a generalization of the (centered) generalized factorial coefficient is defined as the $v$-th coefficient in the expansion of $(at-b)_{(u)}$ into rising factorials, i.e.,
\begin{equation}\label{eq:genfact5}
(at-b)_{(u)}=\sum_{v=0}^{u}\mathscr{C}(u,v;a,b)(t)_{(v)}.
\end{equation}
It is assumed: $\mathscr{C}(0,0;a,b)=1$, $\mathscr{C}(u,0;a,b)=(-b)_{(u)}$ for $u>0$, $\mathscr{C}(u,v;a,b)=0$ for $v>u$. 
The coefficient $\mathscr{C}(u,v;a,b)$ is referred to as the non-centered generalized factorial coefficient. An explicit expression for the non-centered generalized factorial coefficients can be deduced, i.e., 
\begin{equation}\label{eq:genfact1}
\mathscr{C}(u,v;a,b)=\frac{1}{v!}\sum_{j=0}^{v}(-1)^{j}{v\choose j}(-ja-b)_{(u)}.
\end{equation}
The following convolutional identities show the interplay between the (centered) generalized factorial coefficient and the non-centered generalized factorial coefficient, and the interplay between the non-centered generalized factorial coefficient and the non-centered Stirling number, i.e.,
\begin{itemize}
\item[i)]
\begin{equation}\label{eq:genfact4}
\mathscr{C}(u,v;a,b)=\sum_{j=v}^{u}{u\choose j}\mathscr{C}(j,v;a)(-b)_{(u-j)};
\end{equation}
\item[ii)]
\begin{displaymath}
\mathscr{C}(u,v;a_{1}a_{2},a_{1}a_{2}+b_{1})=\sum_{j=v}^{u}\mathscr{C}(u,j;a_{1},b_{1})\mathscr{C}(j,v;a_{2},b_{2});
\end{displaymath}
\item[iii)]
\begin{equation}\label{eq:genfact2}
\lim_{a\rightarrow0}\frac{\mathscr{C}(u,v;a,b)}{a^{k}}=|s(u,v;-b)|;
\end{equation}
\item[iv)]
\begin{displaymath}
\lim_{a\rightarrow+\infty}\frac{\mathscr{C}(u,v;a,ab)}{b^{n}}=(-1)^{u-v}S(u,v;b).
\end{displaymath}
\end{itemize}

\section{A compound Poisson perspective of EPSF}\label{supp_cp}

We conclude present representation of the EPSF in terms of compound Poisson sampling models \citep[Chapter 7]{Cha(05)}, thus providing an intuitive construction of the EPSF that sheds light on the sampling structure of the PYP prior. We consider a population of individuals with a random number $K$ of distinct types, and we assume that $K$ is distributed as a Poisson distribution with parameter $\lambda=z[1-(1-q)^{\alpha}]$ such that $q\in(0,1)$, $\alpha\in(0,1)$ and $z>0$. For $i\geq1$ let $N_{i}$ denote the random number of individuals of type $i$ in the population, and assume the $N_{i}$'s to be independent of $K$, independent of each other and such that for $x\in\mathbb{N}$
\begin{equation}\label{eq:negbin}
\text{Pr}[N_{i}=x]=-\frac{1}{[1-(1-q)^{\alpha}]}{\alpha\choose x}(-q)^{x}
\end{equation}
for all $i\geq1$. Let $S=\sum_{1\leq i\leq K}N_{i}$ and $M_{r}=\sum_{1\leq i\leq K} I(N_{i}=r)$ for $r=1,\ldots,S$, that is $M_{r}$ is the random number of $N_{i}$'s equal to $r$ such that $\sum_{r\geq1}M_{r}=K$ and $\sum_{r\geq1}rM_{r}=S$. If $\mathbf{M}(n,z)=(M_{1}(n,z),\ldots,M_{n}(n,z))$ is a random variable whose distribution coincides with the conditional distribution of $(M_{1},\ldots,M_{S})$ given $S=n$, then it holds \citep[Section 3]{Cha(07)}
\begin{align}\label{eq:sampling_alpha}
&\text{Pr}[\mathbf{M}(n,z)=(m_{1},\ldots,m_{n})]=\frac{n!}{\sum_{j=0}^{n}\mathscr{C}(n,j;\alpha,0)z^{j}}\prod_{i=1}^{n}\frac{\left[z\frac{\alpha(1-\alpha)_{(i-1)}}{i!}\right]^{m_{i}}}{m_{i}!}.
\end{align}
The distribution \eqref{eq:sampling_alpha} is referred to as the negative Binomial compound Poisson sampling formula. See \citet[Chapter 7]{Cha(05)} for an account on compound Poisson sampling models and their distributional properties, and to \citet{DF(20c)} for a comprehensive treatment of the large $n$ asymptotic behaviour of $\mathbf{M}(n,z)$ under the negative Binomial compound Poisson sampling model. See also \citet{DF(20a),DF(20b)} and references therein.

By letting $\alpha\rightarrow0$ the distribution \eqref{eq:negbin} reduces to the log-series distribution, whereas \eqref{eq:sampling_alpha} reduces to the well-known log-series compound Poisson sampling formula. The equivalence between the Ewens sampling formula, i.e. \eqref{eq_ewe_py} with $\alpha=0$, and the log-series compound Poisson sampling formula is known from \citet{Wat(74)}. For $\alpha\in(0,1)$, it exists an interplay  between the EPSF \eqref{eq_ewe_py} and the negative Binomial compound Poisson sampling formula \eqref{eq:sampling_alpha}. Let $G_{a,b}$ denote a Gamma random variable with scale parameter $b>0$ and shape parameter $a>0$, and let $S_{\alpha,\theta}$ be the Pitman's $\alpha$-diversity random variable in \eqref{eq:sigma_diversity}. For $\alpha\in(0,1)$ and $\theta>-\alpha$, let $G_{\theta+n,1}$ and $S_{\alpha,\theta}$ be independent random variables and define the scale mixture random variable
\begin{displaymath}
\bar{X}_{\alpha,\theta,n}=G^{\alpha}_{\theta+n,1}S_{\alpha,\theta}.
\end{displaymath}
If $\mathbf{M}_{n}$ is distributed as the EPSF \eqref{eq_ewe_py}, with $\alpha\in(0,1)$, and $\mathbf{M}(n,z)$ is distributed as the negative Binomial compound Poisson formula \eqref{eq:sampling_alpha}, then \citet[Theorem 2.2]{DF(20c)} show that
\begin{equation}\label{distr_id_epsf}
\mathbf{M}_{n}\stackrel{\text{d}}{=}\mathbf{M}(n,\bar{X}_{\alpha,\theta,n}).
\end{equation}
According to \eqref{distr_id_epsf} the EPSF admits a representation in terms of a randomized negative Binomial compound Poisson sampling formula. The randomization acts on the parameter $z$ with respect to the distribution of the scaled Pitman's $\alpha$-diversity random variable $\bar{X}_{\alpha,\theta,n}$. This is an instance of a compound mixed Poisson sampling model \citep[Chapter 7]{Cha(05)}, where the distribution of $K$ is a mixture of Poisson distributions with respect to the distribution of $\bar{X}_{\alpha,\theta,n}$. We refer to \citet{DF(20c)} for a detailed account of \eqref{distr_id_epsf}, and its role in quantifying the large $n$ asymptotic behaviours of $K_{n}$ and $M_{r,n}$ in \eqref{eq:sigma_diversity} and \eqref{eq:sigma_diversity_m}, respectively.

\section{Proof of Proposition \ref{prp1}}\label{sup_b}
We start by proving statement i), and then statement ii) by letting $\alpha\rightarrow0$. For $\alpha\in(0,1)$ and $\theta>-\alpha$, the falling factorial moment of order $d\geq0$ of $K^{\ast}_{m}$ \citep[Proposition 1]{Fav(09)} is
\begin{equation}\label{eq_momk}
\E[(K^{\ast}_{m})_{[d]}]=(-1)^{d}d!\frac{\left(\frac{\theta+n}{\alpha}\right)_{(d)}}{(\theta+n)_{(m)}}\mathscr{C}(m,d;-\alpha,-\theta-n).
\end{equation}
Let $Z_{n,p}\sim\text{Binomial}(n,p)$, and let recall the moment of order $d$ of $Z_{n,p}$ \citep[Chapter 3.3]{Joh(05)}, i.e., 
\begin{equation}\label{eq_momb}
\E[(Z_{n,p})^{d}]=\sum_{t=0}^{r}S(d,t)(n)_{[t]}p^{t}
\end{equation}
for $d\geq0$. By a direct application of Proposition 1 in \citet{Fav(09)}, the following identities hold
\begin{align*}
&\E[(\mathfrak{u}_{n,m})^{d}\,|\,\mathbf{X}_{n}]\\
&\quad=\sum_{i=0}^{d}(-1)^{d-i}\left(k+\frac{\theta}{\alpha}\right)_{(i)}S\left(d,i;k+\frac{\theta}{\alpha}\right)\frac{(\theta+n+i\alpha)_{(m)}}{(\theta+n)_{(m)}}\\
&\text{[by Equation \eqref{eq:stir1}]}\\
&\quad=\sum_{i=0}^{d}(-1)^{-i}\frac{(\theta+n+i\alpha)_{(m)}}{(\theta+n)_{(m)}}\sum_{t=i}^{d}(-1)^{t}{t\choose i}S(d,t)\left(k+\frac{\theta}{\alpha}\right)_{(t)}\\
&\quad=\sum_{t=0}^{r}S(d,t)\frac{\left(k+\frac{\theta}{\alpha}\right)_{(t)}}{\left(\frac{\theta+n}{\alpha}\right)_{(t)}}\left(\frac{\theta+n}{\alpha}\right)_{(t)}\sum_{i=0}^{t}(-1)^{t-i}{t\choose i}\frac{(\theta+n+i\alpha)_{(m)}}{(\theta+n)_{(m)}}\\
&\text{[by Equation \eqref{eq:genfact1}]}\\
&\quad=\sum_{t=0}^{d}S(d,t)\frac{\left(k+\frac{\theta}{\alpha}\right)_{(t)}}{\left(\frac{\theta+n}{\alpha}\right)_{(t)}}\frac{\left(\frac{\theta+n}{\alpha}\right)_{(t)}}{(\theta+n)_{(m)}}(-1)^{t}t!\mathscr{C}(m,t;-\alpha,-\theta-n)\\
&\text{[by Equation \eqref{eq_momk}]}\\
&\quad=\sum_{t=0}^{d}S(d,t)\frac{\left(k+\frac{\theta}{\alpha}\right)_{(t)}}{\left(\frac{\theta+n}{\alpha}\right)_{(t)}}\E[(K^{\ast}_{m})_{[t]}]\\
&\quad=\sum_{t=0}^{d}S(d,t)\frac{\Gamma\left(\frac{\theta+n}{\alpha}\right)}{\Gamma\left(\frac{\theta}{\alpha}+k\right)\Gamma\left(\frac{n}{\alpha}-k\right)}\frac{\Gamma\left(k+\frac{\theta}{\alpha}+t\right)\Gamma\left(\frac{n}{\alpha}-k\right)}{\Gamma\left(\frac{\theta+n}{\alpha}+t\right)}\E[(K^{\ast}_{m})_{[t]}]\\
&\text{[by the identity $\frac{\Gamma\left(k+\frac{\theta}{\alpha}+t\right)\Gamma\left(\frac{n}{\alpha}-k\right)}{\Gamma\left(\frac{\theta+n}{\alpha}+t\right)}=\int_{0}^{1}x^{t+\frac{\theta}{\alpha}+k-1}(1-x)^{\frac{n}{\alpha}-k-1}\ddr x$]}\\
&\quad=\sum_{t=0}^{d}S(d,t)\E[(K^{\ast}_{m})_{[t]}]\frac{\Gamma\left(\frac{\theta+n}{\alpha}\right)}{\Gamma\left(\frac{\theta}{\alpha}+k\right)\Gamma\left(\frac{n}{\alpha}-k\right)}\int_{0}^{1}x^{t+\frac{\theta}{\alpha}+k-1}(1-x)^{\frac{n}{\alpha}-k-1}\ddr x\\
&\quad=\sum_{t=0}^{d}S(d,t)\E[(K^{\ast}_{m})_{[t]}]\E\left[\left(B_{\frac{\theta}{\alpha}+k,\,\frac{n}{\alpha}-k}\right)^{t}\right]\\
&\quad=\E\left[\E\left[\sum_{t=0}^{d}S(d,t)(K^{\ast}_{m})_{[t]}\left(B_{\frac{\theta}{\alpha}+k,\,\frac{n}{\alpha}-k}\right)^{t}\right]\right]\\
&\text{[by Equation \eqref{eq_momb}]}\\
&\quad=\E\left[\left(Z_{K^{\ast}_{m},B_{\frac{\theta}{\alpha}+k,\,\frac{n}{\alpha}-k}}\right)^{d}\right].
\end{align*}
This completes the proof of statement i) of Proposition \ref{prp1}. The proof of statement ii) follows along similar lines, and by letting $\alpha\rightarrow0$. By combining Equation \eqref{eq_momk} with Equation \eqref{eq:genfact2} it holds
\begin{align}\label{eq_momk_dir}
\E[(K^{\ast}_{m})_{[d]}]&=\lim_{\alpha\rightarrow0}(-1)^{d}d!\frac{\left(\frac{\theta+n}{\alpha}\right)_{(d)}}{(\theta+n)_{(m)}}\mathscr{C}(m,d;-\alpha,-\theta-n)\\
&\notag=\frac{(\theta+n)^{d}}{(\theta+n)_{(m)}}d!|s(m,d;\theta+n)|
\end{align}
for $d\geq0$. Then, still using Proposition 1 in \citet{Fav(09)}, the following identities hold true
\begin{align*}
\E[(\mathfrak{u}_{n,m})^{d}\,|\,\mathbf{X}_{n}]&=\lim_{\alpha\rightarrow0}\sum_{i=0}^{d}(-1)^{d-i}\left(k+\frac{\theta}{\alpha}\right)_{(i)}S\left(d,i;k+\frac{\theta}{\alpha}\right)\frac{(\theta+n+i\alpha)_{(m)}}{(\theta+n)_{(m)}}\\
&\text{[by Equation \eqref{eq:stir1} and Equation \eqref{eq:genfact1}]}\\
&=\lim_{\alpha\rightarrow0}\sum_{t=0}^{d}S(d,t)\frac{\left(k+\frac{\theta}{\alpha}\right)_{(t)}}{\left(\frac{\theta+n}{\alpha}\right)_{(t)}}\frac{\left(\frac{\theta+n}{\alpha}\right)_{(t)}}{(\theta+n)_{(m)}}(-1)^{t}t!\mathscr{C}(m,t;-\alpha,-\theta-n)\\
&\text{[by Equation \eqref{eq:genfact2}]}\\
&=\sum_{t=0}^{d}S(d,t)\frac{\theta^{t}}{(\theta+n)_{(m)}}t!|s(m,t;\theta+n)|\\
&=\sum_{t=0}^{d}S(d,t)\frac{\frac{\theta^{t}}{(\theta+n)_{(m)}}}{(\theta+n)^{t}}(\theta+n)^{t}t!|s(m,t;\theta+n)|\\
&\text{[by Equation \eqref{eq_momk_dir}]}\\
&=\sum_{t=0}^{d}S(d,t)\left(\frac{\theta}{\theta+n}\right)^{t}\E[(K^{\ast}_{m})_{[t]}]\\
&\text{[by Equation \eqref{eq_momb}]}\\
&=\E\left[\left(Z_{K^{\ast}_{m},\frac{\theta}{\theta+n}}\right)^{d}\right].
\end{align*}
This completes the proof of statement ii). Now, we consider the proof of statement iii). For $\alpha\in(0,1)$ and $\theta>-\alpha$, the falling factorial moment of order $d\geq0$ of $M^{\ast}_{r,m}$ \citep[Proposition 1]{Fav(13)} is
\begin{equation}\label{eq_momm}
\E[(M^{\ast}_{r,m})_{[d]}]=(m)_{[dr]}\left(\frac{\alpha(1-\alpha)_{(r-1)}}{r!}\right)^{d}\left(\frac{\theta+n}{\alpha}\right)_{(d)}\frac{(\theta+n+d\alpha)_{(m-dr)}}{(\theta+n)_{(m)}}.
\end{equation}
Then, by combining Theorem 2 in \citet{Fav(13)} with Equation \eqref{eq:genfact1} the following identities hold
\begin{align*}
&\E[(\mathfrak{u}_{r,n,m})^{d}\,|\,\mathbf{X}_{n}]\\
&\quad=\sum_{t=0}^{d}S(d,t)(m)_{[tr]}\left(\frac{\alpha(1-\alpha)_{(r-1)}}{r!}\right)^{t}\left(k+\frac{\theta}{\alpha}\right)_{[t]}\frac{(\theta+n+t\alpha)_{(m-tr)}}{(\theta+n)_{(m)}}\\
&\quad=\sum_{t=0}^{d}S(d,t)\frac{\left(k+\frac{\theta}{\alpha}\right)_{[t]}}{\left(\frac{\theta+n}{\alpha}\right)_{(t)}}(m)_{[tr]}\left(\frac{\alpha(1-\alpha)_{(r-1)}}{r!}\right)^{t}\left(\frac{\theta+n}{\alpha}\right)_{(t)}\frac{(\theta+n+t\alpha)_{(m-tr)}}{(\theta+n)_{(m)}}\\
&\text{[by Equation \eqref{eq_momm}]}\\
&\quad=\sum_{t=0}^{d}S(d,t)\frac{\left(k+\frac{\theta}{\alpha}\right)_{[t]}}{\left(\frac{\theta+n}{\alpha}\right)_{[t]}}\E[(M^{\ast}_{r,m})_{[t]}]\\
&\quad=\sum_{t=0}^{d}S(d,t)\frac{\Gamma\left(\frac{\theta+n}{\alpha}\right)}{\Gamma\left(\frac{\theta}{\alpha}+k\right)\Gamma\left(\frac{n}{\alpha}-k\right)}\frac{\Gamma\left(k+\frac{\theta}{\alpha}+t\right)\Gamma\left(\frac{n}{\alpha}-k\right)}{\Gamma\left(\frac{\theta+n}{\alpha}+t\right)}\E[(M^{\ast}_{r,m})_{[t]}]\\
&\text{[by the identity $\frac{\Gamma\left(k+\frac{\theta}{\alpha}+t\right)\Gamma\left(\frac{n}{\alpha}-k\right)}{\Gamma\left(\frac{\theta+n}{\alpha}+t\right)}=\int_{0}^{1}x^{t+\frac{\theta}{\alpha}+k-1}(1-x)^{\frac{n}{\alpha}-k-1}\ddr x$]}\\
&\quad=\sum_{t=0}^{d}S(d,t)\E[(M^{\ast}_{r,m})_{[t]}]\frac{\Gamma\left(\frac{\theta+n}{\alpha}\right)}{\Gamma\left(\frac{\theta}{\alpha}+k\right)\Gamma\left(\frac{n}{\alpha}-k\right)}\int_{0}^{1}x^{t+\frac{\theta}{\alpha}+k-1}(1-x)^{\frac{n}{\alpha}-k-1}\ddr x\\
&\quad=\sum_{t=0}^{d}S(d,t)\E[(M^{\ast}_{r,m})_{[t]}]\E\left[\left(B_{\frac{\theta}{\alpha}+k,\frac{n}{\alpha}-k}\right)^{t}\right]\\
&\quad=\E\left[\E\left[\sum_{t=0}^{d}S(d,t)(M^{\ast}_{r,m})_{[t]}\left(B_{\frac{\theta}{\alpha}+k,\frac{n}{\alpha}-k}\right)^{t}\right]\right]\\
&\text{[by Equation \eqref{eq_momb}]}\\
&\quad=\E\left[\left(Z_{M^{\ast}_{r,m}}B_{\frac{\theta}{\alpha}+k,\frac{n}{\alpha}-k}\right)^{d}\right].
\end{align*}
This completes the proof of statement iii) of Proposition \ref{prp1}. The proof of statement iv) follows along similar lines, and by letting $\alpha\rightarrow0$. from Equation \eqref{eq_momm} we can write the following identity
\begin{align}\label{eq_momm_dir}
&\E[(M^{\ast}_{r,m})_{[d]}]\\
&\notag\quad=\lim_{\alpha\rightarrow0}(m)_{[dr]}\left(\frac{\alpha(1-\alpha)_{(r-1)}}{r!}\right)^{d}\left(\frac{\theta+n}{\alpha}\right)_{(d)}\frac{(\theta+n+d\alpha)_{(m-dr)}}{(\theta+n)_{(m)}}\\
&\notag\quad=(m)_{[dr]}\left(\frac{\theta+n}{r}\right)^{d}\frac{(\theta+n)_{(m-dr)}}{(\theta+n)_{(m)}}
\end{align}
for $d\geq0$. Then, still by using Theorem 2 in \citet{Fav(13)}, the following identities hold true
\begin{align*}
&\E[(\mathfrak{u}_{r,n,m})^{d}\,|\,\mathbf{X}_{n}]\\ 
&\quad=\lim_{\alpha\rightarrow0}\sum_{t=0}^{d}S(d,t)(m)_{[tr]}\left(\frac{\alpha(1-\alpha)_{(r-1)}}{r!}\right)^{t}\left(k+\frac{\theta}{\alpha}\right)_{[t]}\frac{(\theta+n+t\alpha)_{(m-tr)}}{(\theta+n)_{(m)}}\\
&\quad=\sum_{t=0}^{d}S(d,t)(m)_{[tr]}\left(\frac{\theta}{r}\right)^{t}\frac{(\theta+n)_{(m-tr)}}{(\theta+n)_{(m)}}\\
&\quad=\sum_{t=0}^{d}S(d,t)\left(\frac{\theta}{\theta+n}\right)^{t}(m)_{[tr]}\left(\frac{\theta+n}{r}\right)^{t}\frac{(\theta+n)_{(m-tr)}}{(\theta+n)_{(m)}}\\
&\quad=\text{[by Equation \eqref{eq_momm_dir}]}\\
&\quad=\sum_{t=0}^{d}S(d,t)\left(\frac{\theta}{\theta+n}\right)^{t}\E[(M^{\ast}_{r,m})_{[t]}]\\
&\quad=\text{[by Equation \eqref{eq_momb}]}\\
&\quad=\E\left[\left(Z_{M^{\ast}_{r,m},\frac{\theta}{\theta+n}}\right)^{d}\right].
\end{align*}
This completes the proof of statement iv).

\section{Proof of Proposition \ref{prp2}}\label{sup_c}
We consider the case $\alpha\in(0,1)$ and $\theta>-\alpha$, whereas the case $\alpha=0$ and $\theta>0$ holds by letting $\alpha\rightarrow0$. By means of the definition of $\mathfrak{f}_{r,n,m}$ and an application of the Binomial theorem, we write
\begin{align}\label{starting}
\notag\E[(\mathfrak{f}_{r,n,m})^{d}\,|\,\mathbf{X}_{n}]&=\E\left[\left(\sum_{j\geq1}\mathbbm{1}_{\{N_{j,n}=r\}}\mathbbm{1}_{\{N_{j,m}>0\}}\right)^{d}\,|\,\mathbf{X}_{n}\right]\\
&\notag\quad=\E\left[\left(\sum_{j\geq1}\mathbbm{1}_{\{N_{j,n}=r\}}(1-\mathbbm{1}_{\{N_{j,m}=0\}})\right)^{d}\,|\,\mathbf{X}_{n}\right]\\
&\notag\quad=\E\left[\left(\sum_{j\geq1}\mathbbm{1}_{\{N_{j,n}=r\}}-\sum_{j\geq1}\mathbbm{1}_{\{N_{j,n}=r\}}\mathbbm{1}_{\{N_{j,m}=0\}}\right)^{d}\,|\,\mathbf{X}_{n}\right]\\
&\notag\quad=\E\left[\left(m_{r}-\sum_{j\geq1}\mathbbm{1}_{\{N_{j,n}=r\}}\mathbbm{1}_{\{N_{j,m}=0\}}\right)^{d}\,|\,\mathbf{X}_{n}\right]\\
&\quad=\sum_{i=0}^{d}{d\choose i}m_{r}^{d-i}(-1)^{i}\E\left[\left(\sum_{j\geq1}\mathbbm{1}_{\{N_{j,n}=r\}}\mathbbm{1}_{\{N_{j,m}=0\}}\right)^{i}\,|\,\mathbf{X}_{n}\right].
\end{align}
Now, let 
\begin{equation}\label{disc_r}
\mathfrak{d}_{r,n,m}=\sum_{j\geq1}\mathbbm{1}_{\{N_{j,n}=r\}}\mathbbm{1}_{\{N_{j,m}=0\}}
\end{equation}
be the random variable whose conditional moment of order $i$, given $\mathbf{X}_{n}$ is in Equation \eqref{starting}. The first part of the proof considers the problem of computing the conditional moment of order $d$, given $\mathbf{X}_{n}$, of the random variable \eqref{disc_r}. For $u\in\mathbb{N}$ and $b,c>0$, recall that a random variable $U_{b,c,u}$ on $\{1,\ldots,u\}$ has a generalized factorial distribution if, for $x \in \{1,\ldots,u\}$ it holds
\begin{equation}\label{eq_urn_supp}
\text{Pr}[U_{b,c,u}=x]=\frac{1}{(bc)_{(u)}}\mathscr{C}(u,x;b,0)(c)_{(x)}.
\end{equation}
See Supplementary Material \ref{sup_a}. For $u,v\in\mathbb{N}$ and $a>0$ such that $a>u$, recall that a random variable $H_{a,u,v}$ on $\{0,1,\ldots,u\}$ has a (general) hypergeometric distribution if, for $x\in{\{0,1,\ldots,u\}}$ it holds
\begin{displaymath}
\text{Pr}[H_{a,u,v}=x]=\frac{{a\choose x}{v\choose u-x}}{{a+v\choose u}}.
\end{displaymath}
Recall the moment of order $d$ of the random variable $H_{a,u,v}$ \citep[Chapter 6.3]{Joh(05)}, that is
\begin{equation}\label{gen_hyp_mom}
\E[H_{a,u,v}^{d}]=\sum_{i=1}^{d}S(d,i)i!{u\choose i}\frac{\Gamma\left(a+1+v-i\right)\Gamma\left(a+1\right)}{\Gamma\left(a+1-i\right)\Gamma\left(a+1+v\right)}.
\end{equation}
for $d\geq0$. Now, let $\mathcal{C}_{n,x}$ be a combinatorial set of combinations defined as follows: $\mathcal{C}_{n,0}=\emptyset$  and $\mathcal{C}_{n,x}=\{(c_1,\ldots,c_{x})\text{ : }  c_k\in \{1,\ldots,n\}, c_k\neq c_l, \ \text{if}\ k\neq l \}$ for any $x\geq1$. Then, we can write
\begin{align}\label{eq2}
&\notag\E[(\mathfrak{d}_{r,n,m})^{d}\,|\,\mathbf{X}_{n}]\\
&\notag\quad=\E[(\mathfrak{d}_{r,n,m})^{d}\,|\,\mathbf{N}_{n}=(n_{1},\ldots,n_{k}),K_{n}=k]\\
&\notag\quad=\E\left[\left(\sum_{j=1}^{k}\mathbbm{1}_{\{N_{j,n}=r\}}\mathbbm{1}_{\{N_{j,m}=0\}}\right)^{d}\,|\,\mathbf{N}_{n}=(n_{1},\ldots,n_{k}),K_{n}=k\right]\\
&\notag\quad=\sum_{x=1}^{k}\sum_{i_{1}=1}^{d}\sum_{i_{2}=1}^{i_{1}-1}\cdots\sum_{i_{x-1}=1}^{i_{x-2}-1}{d\choose i_{1}}{i_{1}\choose i_{2}}\cdots{i_{x-2}\choose i_{x-1}}\\
&\notag\quad\quad\times\sum_{(c_{1},\ldots,c_{x})\in\mathcal{C}_{k,x}}\E\left[\prod_{t=1}^{x}(\mathbbm{1}_{\{N_{c_{t},n}=r\}}\mathbbm{1}_{\{N_{c_{t},m}=0\}})^{i_{x-t}-i_{x-t+1}}\,|\,\mathbf{N}_{n}=(n_{1},\ldots,n_{k}),K_{n}=k\right]\\
&\notag\quad=\sum_{x=1}^{d}S(d,x)x!\sum_{(c_{1},\ldots,c_{x})\in\mathcal{C}_{k,x}}\E\left[\prod_{t=1}^{x}\mathbbm{1}_{\{N_{c_{t},n}=r\}}\mathbbm{1}_{\{N_{c_{t},m}=0\}}\,|\,\mathbf{N}_{n}=(n_{1},\ldots,n_{k}),K_{n}=k\right]\\
&\notag\quad=\sum_{x=1}^{d}S(d,x)x!\sum_{(c_{1},\ldots,c_{x})\in\mathcal{C}_{k,x}}\prod_{t=1}^{x}\mathbbm{1}_{\{N_{c_{t},n}=r\}}\E\left[\prod_{t=1}^{x}\mathbbm{1}_{\{N_{c_{t},m}=0\}}\,|\,\mathbf{N}_{n}=n_{1},\ldots,n_{k},K_{n}=k\right]\\
&\quad=\sum_{x=1}^{d}S(d,x)x!\sum_{(c_{1},\ldots,c_{x})\in\mathcal{C}_{k,x}}\prod_{t=1}^{x}\mathbbm{1}_{\{N_{c_{t},n}=r\}}\P[N_{c_{1},m}=0,\ldots,N_{c_{x},m}=0\,|\,\mathbf{N}_{n}=(n_{1},\ldots,n_{k}),K_{n}=k].
\end{align}
The conditional probability in \eqref{eq2} can be computed from \citet[Lemma 1]{Fav(13)}. Let $V_{m}$ be the number of $X_{n+i}$'s that do not coincide with any of the $S_{j}^{\ast}$'s in the random sample $\mathbf{X}_{n}$. From \citet[Equation 38 and Equation 40]{Fav(13)} we can write the following distributions
\begin{itemize}
\item[i)]
\begin{align*}
&\text{Pr}[N_{c_{1},m}=0,\ldots,N_{c_{x},m}=0\,|\,\mathbf{N}_{n}=(n_{1},\ldots,n_{k}),K_{n}=k,V_{m}=v]\\
&\quad=\frac{(n-\sum_{i=1}^{x}n_{c_{i}}-(k-x)\alpha)_{(m-v)}}{(n-k\alpha)_{(m-v)}};
\end{align*}
\item[ii)]
\begin{align*}
&\text{Pr}[V_{N-n}=v\,|\,\mathbf{N}_{n}=(n_{1},\ldots,n_{k}),K_{n}=k]\\
&\quad{m\choose v}(n-k\alpha)_{(m-v)}\sum_{j=0}^{v}\frac{\frac{\prod_{i=0}^{k+j-1}(\theta+i\alpha)}{(\theta)_{(n+m)}}}{\frac{\prod_{i=0}^{k-1}(\theta+i\alpha)}{(\theta)_{(n)}}}\frac{\mathscr{C}(v,j;\alpha)}{\alpha^{j}},
\end{align*}
\end{itemize}
such that
\begin{align}\label{eq3}
&\text{Pr}[N_{c_{1},m}=0,\ldots,N_{c_{x},m}=0\,|\,\mathbf{N}_{n}=\mathbf{n}_{n},K_{n}=k]\\
&\notag\quad=\sum_{j=0}^{m}\frac{1}{\alpha^{j}}\frac{\frac{\prod_{i=0}^{k+j-1}(\theta+i\alpha)}{(\theta)_{(n+m)}}}{\frac{\prod_{i=0}^{k-1}(\theta+i\alpha)}{(\theta)_{(n)}}}\sum_{v=j}^{m}{m\choose s}\mathscr{C}(v,j;\alpha)(n-\sum_{i=1}^{x}n_{c_{i}}-(k-x)\alpha)_{(m-v)}\\
&\notag\text{[by Equation \eqref{eq:genfact4}]}\\
&\notag\quad=\sum_{j=0}^{m}\frac{\frac{\prod_{i=0}^{k+j-1}(\theta+i\alpha)}{(\theta)_{(n+m)}}}{\frac{\prod_{i=0}^{k-1}(\theta+i\alpha)}{(\theta)_{(n)}}}\frac{\mathscr{C}(m,j;\alpha,-n+\sum_{i=1}^{x}n_{c_{i}}+(k-x)\alpha)}{\alpha^{j}}\\
&\notag\quad=\frac{1}{(\theta+n)_{(m)}}\sum_{j=0}^{m}\left(\frac{\theta}{\alpha}+k\right)_{(j)}\mathscr{C}(m,j;\alpha,-n+\sum_{i=1}^{x}n_{c_{i}}+(k-x)\alpha)\\
&\notag\text{[by Equation \eqref{eq:genfact5}]}\\
&\notag\quad=\frac{(\theta+n-\sum_{i=1}^{x}n_{c_{i}}+x\alpha)_{(m)}}{(\theta+n)_{(m)}}.
\end{align}
Then, by a direct combination of Equation \eqref{eq2} with Equation \ref{eq3} we can write the following identities
\begin{align*}
&\notag\E[(\mathfrak{d}_{r,n,m})^{d}\,|\,\mathbf{X}_{n}]\\
&\quad=\sum_{x=1}^{d}S(d,x)x!\sum_{(c_{1},\ldots,c_{x})\in\mathcal{C}_{k,x}}\prod_{t=1}^{x}\mathbbm{1}_{\{N_{c_{t},n}=r\}}\text{Pr}[N_{c_{1},m}=0,\ldots,N_{c_{x},m}=0\,|\,\mathbf{N}_{n}=(n_{1},\ldots,n_{k}),K_{n}=k]\\
&\quad=\sum_{x=1}^{d}S(d,x)x!\sum_{(c_{1},\ldots,c_{x})\in\mathcal{C}_{k,x}}\prod_{t=1}^{x}\mathbbm{1}_{\{N_{c_{t},n}=r\}}\frac{(\theta+n-\sum_{i=1}^{x}n_{c_{i}}+x\alpha)_{(m)}}{(\theta+n)_{(m)}}\\
&\notag\text{[by the definition of $\mathcal{C}_{k,x}$]}\\
&\quad=\sum_{x=1}^{d}S(d,x)x!{m_{r}\choose x}\frac{(\theta+n-xr+x\alpha)_{(m)}}{(\theta+n)_{(m)}}\\
&\quad=\frac{1}{(\theta+n)_{(m)}}\sum_{x=1}^{d}S(d,x)x!{m_{r}\choose x}\left((r-\alpha)\left(\frac{\theta+n}{r-\alpha}-x\right)\right)_{(m)}\\
&\notag\text{[by Equation \eqref{eq:genfact3}]}\\
&\quad=\frac{1}{(\theta+n)_{(m)}}\sum_{x=1}^{d}S(d,x)x!{m_{r}\choose x}\sum_{i=0}^{m}\mathscr{C}(m,i;r-\alpha)\left(\frac{\theta+n}{r-\alpha}-x\right)_{(i)}\\
&\quad=\frac{1}{(\theta+n)_{(m)}}\sum_{i=0}^{m}\mathscr{C}(m,i;r-\alpha)\sum_{x=1}^{d}S(d,x)x!{m_{r}\choose x}\frac{\Gamma\left(\frac{\theta+n}{r-\alpha}+i-x\right)}{\Gamma\left(\frac{\theta+n}{l-\alpha}-x\right)}\\
&\quad=\frac{1}{(\theta+n)_{(m)}}\sum_{i=0}^{m}\mathscr{C}(m,i;r-\alpha)\frac{\Gamma\left(\frac{\theta+n}{r-\alpha}+i\right)}{\Gamma\left(\frac{\theta+n}{r-\alpha}\right)}\\
&\quad\quad\times\sum_{x=1}^{d}S(d,x)x!{m_{r}\choose x}\frac{\Gamma\left(\frac{\theta+n}{r-\alpha}+i-x\right)\Gamma\left(\frac{\theta+n}{r-\alpha}\right)}{\Gamma\left(\frac{\theta+n}{r-\alpha}-x\right)\Gamma\left(\frac{\theta+n}{r-\alpha}+i\right)}\\
&\text{[by Equation \eqref{eq_urn_supp} and \eqref{gen_hyp_mom}]}\\
&\quad=\E[(H_{\frac{\theta+n}{r-\alpha}-1,m_{r},U_{m,\frac{\theta+n}{r-\alpha},r-\alpha}})^{d}].
\end{align*}
Then, by combining the last expression with \eqref{starting}, the moment $\E[(\mathfrak{f}_{r,n,m})^{d}\,|\,\mathbf{X}_{n}]$ can be written as
\begin{align*}
\notag\E[(\mathfrak{f}_{r,n,m})^{d}\,|\,\mathbf{X}_{n}]&=\sum_{i=0}^{d}{d\choose i}m_{r}^{d-i}(-1)^{i}\E\left[\left(\mathfrak{d}_{r,n,m}\right)^{i}\,|\,\mathbf{X}_{n}\right]\\
&=\sum_{i=0}^{d}{d\choose i}m_{r}^{d-i}(-1)^{i}\E[(H_{\frac{\theta+n}{r-\alpha}-1,m_{r},U_{m,\frac{\theta+n}{r-\alpha},r-\alpha}})^{i}]\\
&=\E\left[\left(m_{r}-H_{\frac{\theta+n}{r-\alpha}-1,m_{r},U_{m,\frac{\theta+n}{r-\alpha},r-\alpha}}\right)^{d}\right].
\end{align*}
The proof for $\alpha\in(0,1)$ and $\theta>-\alpha$ is completed. The case $\alpha=0$ and $\theta>0$ follows by letting $\alpha\rightarrow0$.

\section{Proof of Equation (\ref{eq_profile_est})}\label{sup_d}
Let $\mathbf{X}_{n}$ be a random sample from $P\sim\text{PYP}(\alpha,\theta)$ with $\alpha\in[0,1)$ and $\theta>-\alpha$, and let $\mathbf{X}_{n}$ feature $K_{n}=k$ distinct symbols with frequencies $\mathbf{N}_{n}=(n_{1},\ldots,n_{k})$, and $M_{r,n}=m_{r}$ is the number of distinct species with frequency $r\geq1$. From the proof of Proposition \ref{prp2} in Supplementary Material \ref{sup_c}, the moment of order $d\geq1$ of the posterior distribution of $\mathfrak{f}_{r,n,m}$, given $\mathbf{X}_{n}$, is
\begin{equation}\label{mom_tau1}
\E[(\mathfrak{f}_{r,n,m})^{d}\,|\,\mathbf{X}_{n}]=\sum_{i=1}^{d}{d\choose i}m_{r}^{d-i}(-1)^{i}\sum_{x=1}^{i}S(i,x)x!{m_{r}\choose x}\frac{(\theta+n-xr+x\alpha)_{(m)}}{(\theta+n)_{(m)}},
\end{equation}
and hence
\begin{displaymath}
\E[\mathfrak{f}_{r,n,m}\,|\,\mathbf{X}_{n}]=m_{r}-\frac{(\theta+n-r+\alpha)_{(m)}}{(\theta+n)_{(m)}}.
\end{displaymath}
This completes the proof.

\section{Proof of Equation (\ref{eq:approx_post})}\label{sup_e}
Let $\mathbf{X}_{n}$ be a random sample from $P\sim\text{PYP}(\alpha,\theta)$ with $\alpha\in[0,1)$ and $\theta>-\alpha$, and let $\mathbf{X}_{n}$ feature $K_{n}=k$ distinct symbols with frequencies $\mathbf{N}_{n}=(n_{1},\ldots,n_{k})$, and $M_{r,n}=m_{r}$ is the number of distinct species with frequency $r\geq1$. From the proof of Proposition \ref{prp2} in Supplementary Material \ref{sup_c}, the moment of order $d\geq1$ of the posterior distribution of $\mathfrak{f}_{r,n,m}$, given $\mathbf{X}_{n}$, is
\begin{equation}\label{mom_tau}
\E[(\mathfrak{f}_{r,n,m})^{d}\,|\,\mathbf{X}_{n}]=\sum_{i=1}^{d}{d\choose i}m_{r}^{d-i}(-1)^{i}\sum_{x=1}^{i}S(i,x)x!{m_{r}\choose x}\frac{(\theta+n-xr+x\alpha)_{(m)}}{(\theta+n)_{(m)}}
\end{equation}
Recall that by means of Stirling formula $\Gamma(n + i)/\Gamma(n)\simeq n^{i}$ as $n\rightarrow+\infty$ is a first order approximation of the Gamma function. By applying Stirling formula to \eqref{mom_tau}, as $n\rightarrow+\infty$ and $m\rightarrow+\infty$.
\begin{align}\label{eq:asym_mom}
&\nonumber \E[(\mathfrak{f}_{r,n,m})^{d}\,|\,\mathbf{X}_{n}]\\
&\nonumber\quad=\sum_{i=1}^{d}{d\choose i}m_{r}^{d-i}(-1)^{i}\sum_{x=1}^{i}S(i,x)x!{m_{r}\choose x}\frac{(\theta+n-xr+x\alpha)_{(m)}}{(\theta+n)_{(m)}}\\
&\nonumber\quad=\sum_{i=1}^{d}{d\choose i}m_{r}^{d-i}(-1)^{i}\sum_{x=1}^{i}S(i,x)x!{m_{r}\choose x}\frac{\frac{\Gamma(\theta+n+m-xr+x\alpha)}{\Gamma(\theta+n-xr+x\alpha)}}{\frac{\Gamma(\theta+n+m)}{\Gamma(\theta+n)}}\\
&\quad\simeq\sum_{i=1}^{d}{d\choose i}m_{r}^{d-i}(-1)^{i}\sum_{x=1}^{i}S(i,x)x!{m_{r}\choose x}\left\{\left(\frac{n}{n+m}\right)^{r-\alpha}\right\}^{x}.
\end{align}
Formula \eqref{eq:asym_mom} is the moment of order $d$ of the random variable $m_{r}-Z_{m_{r},\,(n/(n+m))^{r-\alpha}}$ with $Z_{m_{r},\,(n/(n+m))^{r-\alpha}}\sim\text{Binomial}(m_{r},\,(n/(n+m))^{r-\alpha})$ \citep[Section 3.3]{Joh(05)}. This completes the proof.


\section{Proof of Theorem~\ref{thm:wellspecified:mmle}}
\label{sec:proof:thm:wellspecified:mmle}

In the next we let $\PP_{\alpha,\theta}$ denote the joint distribution of $(P,X_1,X_2,\dots)$ under the parameter $(\alpha,\theta)$ [\textit{ie}. $P \sim \mathrm{PYP}(\alpha,\theta)$ and $X_1,X_2,\dots \mid P \overset{iid}{\sim} P$]. We also let $\alpha_{*}$ and $\theta_{*}$ defined as in Theorem~\ref{thm:4sc-jzwh-mc1}. Recall that $\lim_{x\to 0} x^{\alpha_0} \bar{F}_P(x) = S_{\alpha_0,\theta_0}/\Gamma(1-\alpha_0)$ almost-surely under $\PP_{\alpha_0,\theta_0}$. Then by Theorem~\ref{thm:4sc-jzwh-mc1} and by Corollary~\ref{cor:pypsmooth} with $\alpha_{*} = \alpha_0$, for all $\varepsilon > 0$
\begin{equation*}
  \lim_n \PP_{\alpha_0,\theta_0}\Bigg( |\hat\alpha_n - \alpha_0| \leq \varepsilon,\ |\hat\theta_n - Z| \leq \varepsilon \,\Big\vert\, P \Bigg) = 1
\end{equation*}
$\PP_{\alpha_0,\theta_0}\textrm{-as}$. By dominated convergence, for all $\varepsilon > 0$,
  \begin{align*}
    &\lim_n \PP_{\alpha_0,\theta_0}\Bigg( |\hat\alpha_n - \alpha_0| \leq \varepsilon,\ |\hat\theta_n - Z| \leq \varepsilon\Bigg)\\
    &\quad= \lim_n \EE_{\alpha_0,\theta_0}\Bigg[\PP_{\alpha_0,\theta_0}\Bigg( |\hat\alpha_n - \alpha_0| \leq \varepsilon,\ |\hat\theta_n - Z| \leq \varepsilon \,\Big\vert\, P \Bigg) \Bigg]\\
    &\quad=  \EE_{\alpha_0,\theta_0}\Bigg[\lim_n\PP_{\alpha_0,\theta_0}\Bigg( |\hat\alpha_n - \alpha_0| \leq \varepsilon,\ |\hat\theta_n - Z| \leq \varepsilon \,\Big\vert\, P \Bigg) \Bigg]\\
    &\quad= 1.
  \end{align*}
  Hence,
\begin{equation*}
  |\hat\alpha_n - \alpha_0|
  = o_p(1),\qquad |\hat\theta_n - Z| = o_p(1)
\end{equation*}
under $\PP_{\alpha_0,\theta_0}$.

\section{Proof of Theorem~\ref{thm:consistency-wellspecified-bayes}}
\label{sec:proof:thm:consis}

In the next we let $\PP_{\alpha,\theta}$ denote the joint distribution of $(P,X_1,X_2,\dots)$ under the parameter $(\alpha,\theta)$ [\textit{ie}. $P \sim \mathrm{PYP}(\alpha,\theta)$ and $X_1,X_2,\dots \mid P \overset{iid}{\sim} P$]. We also let $\hat\alpha_n$, $\hat{V}_n$, $\alpha_{*}$, $\theta_{*}$, $\phi$ and $H_{*}$ defined as in Theorems~\ref{thm:misspecified:mle} and~\ref{thm:4sc-jzwh-mc1}. Then by Theorem~\ref{thm:4sc-jzwh-mc1} and by Corollary~\ref{cor:pypsmooth} with $\alpha_{*} = \alpha_0$, for all $\varepsilon > 0$
\begin{equation*}
  \lim_n \PP_{\alpha_0,\theta_0}\Bigg( \sup_{A,B}\Big|\Pi\big(\hat{V}_n^{1/2}(\alpha - \hat{\alpha}_n) \in A,\,\gamma \in B \mid \mathbf{X}_n\big)- \phi(A)H_{*}(B) \Big| > \varepsilon \, \Big\vert \, P \Bigg) = 0
\end{equation*}
$\PP_{\alpha_0,\theta_0}\textrm{-as}$. By dominated convergence, for all $\varepsilon > 0$,%
  \begin{align*}
    &\lim_n \PP_{\alpha_0,\theta_0}\Bigg( \sup_{A,B}\Big|\Pi\big(\hat{V}_n^{1/2}(\alpha - \hat{\alpha}_n) \in A,\,\gamma \in B \mid \mathbf{X}_n\big)- \phi(A)H_{*}(B) \Big| > \varepsilon \Bigg)\\%
    &= \lim_n\EE_{\alpha_0,\theta_0}\Bigg[\PP_{\alpha_0,\theta_0}\Bigg( \sup_{A,B}\Big|\Pi\big(\hat{V}_n^{1/2}(\alpha - \hat{\alpha}_n) \in A,\,\gamma \in B \mid \mathbf{X}_n\big)- \phi(A)H_{*}(B) \Big| > \varepsilon \, \Big\vert \, P  \Bigg) \Bigg]\\
    &= \EE_{\alpha_0,\theta_0}\Bigg[\lim_n\PP_{\alpha_0,\theta_0}\Bigg( \sup_{A,B}\Big|\Pi\big(\hat{V}_n^{1/2}(\alpha - \hat{\alpha}_n) \in A,\,\gamma \in B \mid \mathbf{X}_n\big)- \phi(A)H_{*}(B) \Big| > \varepsilon \, \Big\vert \, P  \Bigg) \Bigg]\\
    &= 0.
  \end{align*}
  Hence,
\begin{equation}
  \label{eq:posterior:limit}
  \sup_{A,B}\Big|\Pi\big(\hat{V}_n^{1/2}(\alpha - \hat{\alpha}_n) \in A,\,\gamma \in B \mid \mathbf{X}_n\big)- \phi(A)H_{*}(B) \Big|%
  = o_p(1)
\end{equation}
under $\PP_{\alpha_0,\theta_0}$. The same argument combined with Theorem~\ref{thm:misspecified:mle} shows that $\hat\alpha_n = \alpha_0 + o_p(1)$ under $\PP_{\alpha_0,\theta_0}$ and that $\hat{V}_n \to \infty$ with probability going to one, which establishes the first claim of the Theorem. Regarding the second claim, remark that $H_{*}$ is a random probability measure under $\PP_{\alpha_0,\theta_0}$, and almost-surely $H_{*}$ is continuous. Hence, there exists a random variable $W$ such that
\begin{equation*}
  H_{*}\big([\theta_0 + \alpha_0 - W,\ \theta_0+\alpha_0 +  W] \big) \leq \frac{1}{2}
\end{equation*}
$\PP_{\alpha_0,\theta_0}$-as. The result then follows by \eqref{eq:posterior:limit}.

\section{Proof of Theorem~\ref{thm:lecamtheta}}
\label{sec:proof:thm:lecamtheta}

It is a classical result of Le Cam that
\begin{multline}
    \label{eq:lecam}
    \inf_{\hat\theta_n}\sup_{-\alpha < \theta \leq t}\EE_{(\alpha,\theta)}\big( (\hat{\theta}_n(\mathbf{X}_n) - \theta)^2 \big)\\%
    \geq \frac{1}{4}\sup_{-\alpha <\theta_1 < \theta_2 \leq t} (\theta_2 - \theta_1)^2\Big(1 - \mathsf{TV}(\PP_{\alpha,\theta_1}(\mathbf{X}_n \in \cdot),\PP_{\alpha,\theta_2}(\mathbf{X}_n \in \cdot)) \Big)
\end{multline}
where $\mathsf{TV}(\PP_{\alpha,\theta_1}(\mathbf{X}_n \in \cdot),\PP_{\alpha,\theta_2}(\mathbf{X}_n \in \cdot))$ denote the total variation distance between the laws of $\mathbf{X}_n$ under the parameters $(\alpha,\theta_1)$ and $(\alpha,\theta_2)$.

We first give an upper bound on $\mathsf{KL}(\PP_{\alpha,\theta_1}(\mathbf{X}_n \in \cdot);\PP_{\alpha,\theta_2}(\mathbf{X}_n \in \cdot))$; this will be enough to obtain an upper-bound on the total variation in virtue of Pinsker' inequality. See that,
\begin{align*}
  \mathsf{KL}(\PP_{\alpha,\theta_1}(\mathbf{X}_n \in \cdot);\PP_{\alpha,\theta_2}(\mathbf{X}_n \in \cdot))%
  &= - \EE_{(\alpha,\theta_1)}\big(\ell_n(\alpha,\theta_2) - \ell_n(\alpha,\theta_1) \big)\\
  &=- \EE_{(\alpha,\theta_1)}\Big(- \frac{1}{2}\partial_{\theta}^2 \ell_n(\alpha,\bar \theta)(\theta_1 - \theta_2)^2 \Big)
\end{align*}
where the last line is true for some $\bar\theta \in [\theta_1,\theta_2]$ by a Taylor expansion, and by the classical result that $\EE_{(\alpha,\theta_1)}(\partial_{\theta}\ell_n(\alpha,\theta_1)) = 0$ in regular enough models. From the expression of the likelihood given in the main document [or from \eqref{eq:j29-mj0s-4ya} below], we deduce that
\begin{align*}
    \partial_{\theta}^2\ell_n(\alpha,\theta)%
    &=\psi'(\theta+1) + \frac{1}{\alpha}\psi'(\theta/\alpha+K_n) - \frac{1}{\alpha}\psi'(\theta/\alpha+1) - \psi'(\theta+n)
\end{align*}
where $\psi'$ the the derivative of the digamma function. The function $\psi'$ is monotone decreasing on $(0,\infty)$ and non-negative \citep{abramo}. Hence, for all $\bar{\theta}\in [\theta_1,\theta_2]$
\begin{align*}
    \partial_{\theta}^2\ell_n(\alpha,\bar\theta)%
    &\geq - \frac{1}{\alpha}\psi'(\theta_1/\alpha+1).
\end{align*}
Hence, $\mathsf{KL}(\PP_{\alpha,\theta_1}(\mathbf{X}_n \in \cdot);\PP_{\alpha,\theta_2}(\mathbf{X}_n \in \cdot)) \leq \frac{\psi'(\theta_1/\alpha+1)}{2\alpha}(\theta_2-\theta_1)^2$, and by Pinsker's inequality for all $\theta_2 > \theta_1$
\begin{equation*}
    \mathsf{TV}(\PP_{\alpha,\theta_1}(\mathbf{X}_n \in \cdot),\PP_{\alpha,\theta_2}(\mathbf{X}_n \in \cdot))%
    \leq \frac{\theta_2 - \theta_1}{2}\sqrt{\frac{\psi'(\theta_1/\alpha+1)}{\alpha} }.
\end{equation*}

Given the previous upper bound, we then deduce from \eqref{eq:lecam} that
\begin{align*}
    \inf_{\hat\theta_n}\sup_{-\alpha < \theta \leq t}\EE_{(\alpha,\theta)}\big( (\hat{\theta}_n(\mathbf{X}_n) - \theta)^2 \big)%
    &\geq \frac{1}{4}\sup_{0 < \varepsilon < t+\alpha}\varepsilon^2\Big(1 - \frac{\varepsilon}{2}\sqrt{\frac{\psi'((t-\varepsilon)/\alpha + 1)}{\alpha} } \Big)\\
    &\geq \frac{1}{8}\min\Bigg(\frac{(t+\alpha)^2}{4},\ \frac{\alpha}{\psi'[(t/\alpha+1)/2 ]} \Bigg)
\end{align*}
where the last line follows by using that the supremum over $\varepsilon$ is greater than the value at $\varepsilon = \min\big(\frac{t+\alpha}{2},\, \sqrt{\alpha/\psi'[(t/\alpha+1)/2]} \big)$, and because $\psi'$ is monotone decreasing. Finally, $\psi'(z) \leq \frac{1}{z} + \frac{1}{z^2} \leq 2\max(z^{-1},z^{-2})$ for all $z > 0$ \citep{abramo}, thus
\begin{align*}
    \frac{\alpha}{\psi'[(t/\alpha+1)/2 ]}%
    &\geq \frac{\alpha}{2}\min\Bigg( \frac{t/\alpha+1}{2},\, \frac{(t/\alpha+1)^2}{4} \Bigg)%
    = \min\Bigg(\frac{t + \alpha}{4},\, \frac{(t+\alpha)^2}{8\alpha} \Bigg).
\end{align*}
The conclusion follows since $\alpha \in (0,1)$.

\section{Proof of Theorem~\ref{thm:misspecified:mle}}
\label{sec:4d9-w5my-t85}

In order to prove these results, we need to write the likelihood
$L_n$ under a convenient form. We can obtain from the equation~\eqref{eq_ewe_py}
that $L_n$ factorizes as
\begin{equation}
  \label{eq:j29-mj0s-4ya}
  L_n(\alpha,\theta)%
  = \frac{\prod_{i=0}^{K_n-1}(\theta/\alpha + i)}{\prod_{i=0}^{n-1}(\theta + i)}L_n(\alpha,0)%
  = \frac{\Gamma(\theta+1)\Gamma(\theta/\alpha + K_n)}{\Gamma(\theta/\alpha + 1)\Gamma(\theta + n)} \frac{L_n(\alpha,0)}{\alpha}.
\end{equation}
We note that $\alpha \mapsto L_n(\alpha,0)$ is the likelihood of the $\alpha$-stable Poisson-Kingman
partition model \cite{Pit(06)} which has been investigated in \cite{FN(21)}. In the
whole proof, we denote $\ell_n = \log L_n$ the log-likelihood function of the
model.

The rest of this section is dedicated to the proof of the following Proposition. The results in Theorem~\ref{thm:misspecified:mle} are immediately deduced.

\begin{prp}
  \label{prp:9cr-dlcr-2yc}
  Let $X_1,\dots,X_n \overset{iid}{\sim} p$ with
  a distribution $p$ over a set of symbols. Then, the following item are true.
  \begin{enumerate}
    \item\label{item:rph-e4g6-jcz} The mapping $\alpha \mapsto \log L_n(\alpha,0)$ is concave on
    $(0,1)$ and it admits a unique maximizer $\hat{\alpha}_n^0$ whenever $K_n \ne n$
    and $K_n \ne 1$. Furthermore, if Assumption~\ref{ass:ucj-yl04-iyy} is
    satisfied,
    \begin{equation*}
      \log(n)| \hat{\alpha}_n^0 - \alpha_{*}| = o_p(1).
    \end{equation*}
    
    \item\label{item:6x9-2jie-dzi} Let Assumption~\ref{ass:ucj-yl04-iyy} be
    satisfied. Then the set
    $\argmax_{(\alpha,\theta)\in \Phi}L_n(\alpha,\theta)$ is not empty with
    probability $1 + o(1)$ as $n\to \infty$. Furthermore,
    $(\hat{\alpha}_n,\hat{\theta}_{n}) \in \argmax_{(\alpha,\theta)\in \Phi}L_n(\alpha,\theta)$
    must satisfy
    $\hat{\alpha}_n = \hat{\alpha}_n^0 + O_p\big[\frac{\log(n)}{n^{\alpha_{*}}}\big]$
    [thus $\hat{\alpha}_n = \alpha_{*} + o_p(1)$], and
    $\hat{\theta}_n = \theta_{*} + o_p(1)$.
  \end{enumerate}
\end{prp}

\subsection{Proof of the item~\ref{item:rph-e4g6-jcz} in Proposition~\ref{prp:9cr-dlcr-2yc}}

The concavity of $\alpha \mapsto \ell_n(\alpha,0)$, uniqueness, and existence of its maximizer
when $K_n \ne 1$ and $K_n \ne n$ is a consequence of the fact that
\begin{equation}
  \label{eq:ft9-0iz5-m0m}%
  \ell_n(\alpha,0) = (K_n-1)\log(\alpha) + \sum_{i=1}^{n-1} \Bigg(\sum_{\ell=i+1}^nM_{n,\ell} \Bigg)\log(i - \alpha).
\end{equation}
In particular, we see that $\lim_{\alpha \to 0}\partial_{\alpha}\ell_n(\alpha,0) = \infty$ and
$\lim_{\alpha\to 1}\partial_{\alpha}\ell_n(\alpha,0) =-\infty$; and also $\partial_{\alpha}^2\ell_n(\alpha,0) < 0$ for all
$\alpha \in (0,1)$. The same result has been established with more details in Theorem~1
of \cite{FN(21)}.

The second claim is a classical consistency analysis of the maximum (marginal) likelihood estimator; it is also
similar to Theorem 2 in \cite{FN(21)} but here we use weaker
conditions, which changes a bit the analysis of the misspecification bias. For
the sake of completeness, we give the complete proof.

We show that for all $\epsilon > 0$ the condition $\log(n)|\hat{\alpha}_n^0 - \alpha_{*}| > \epsilon$
contradicts $\partial_{\alpha}\ell_n(\hat{\alpha}_n^0,0) = 0$ on a event of probability $1 + o(1)$.
Since $\ell_n(\cdot,0)$ is concave, $\partial_{\alpha}\ell_n(\cdot,0)$ is monotone decreasing on $(0,1)$.
Therefore,
\begin{equation*}
  \inf_{|\alpha - \alpha_{*}| > \delta}|\partial_{\alpha}\ell_n(\alpha,0)|%
  = \min\big\{|\partial_{\alpha}\ell_n(\alpha_{*} - \delta)|,\, |\partial_{\alpha}\ell_n(\alpha_{*} + \delta)| \big\}.
\end{equation*}
Hence, the theorem follows if we show that with probability
$1 + o(1)$ we have $|\partial_{\alpha}\ell_n(\alpha_{*} - \delta)| > 0$ and $|\partial_{\alpha}\ell_n(\alpha_{*} + \delta)| > 0$ for
$\delta = \epsilon/\log(n)$ [and for all $\epsilon > 0$]. We do so by proving in Proposition~\ref{pro:6} that
$\mathcal{S}_{p,n}(\alpha) = \EE_{p}[\partial_{\alpha}\ell_n(\alpha,0)]$ satisfies $\mathcal{S}_{p,n}(\alpha \pm \delta) \gg 0$
and by proving in Proposition~\ref{pro:7} that
$\var_{p}(\partial_{\alpha}\ell_n(\alpha \pm \delta)) \ll \mathcal{S}_{p,n}(\alpha \pm \delta)^2$.

\begin{prp}
  \label{pro:6}
  Let $p$ satisfy Assumption~\ref{ass:ucj-yl04-iyy}. Then, there exists a
  constant $B > 0$ depending only on $(L,\alpha_{*})$ such that for all $\epsilon > 0$ and
  all $\alpha \in (0,1)$ as $n\to \infty$
  \begin{equation*}
    \log(n)|\alpha - \alpha_{*}| > \epsilon \implies |\mathcal{S}_{p,n}(\alpha)| \geq \frac{Bn^{\alpha_{*}}|\alpha - \alpha_{*}|}{\min(\alpha,1-\alpha)}(1 + o(1)).
  \end{equation*}
\end{prp}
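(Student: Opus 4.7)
The plan is to compute $\mathcal{S}_{p,n}(\alpha)$ explicitly from the factorization~\eqref{eq:ft9-0iz5-m0m}, expand it asymptotically via Karlin-type regular-variation asymptotics for the infinite occupancy scheme, and then deduce the bound from a first-order expansion of the limit score around $\alpha_{*}$. Differentiating~\eqref{eq:ft9-0iz5-m0m} in $\alpha$ and exchanging the order of summation in the double sum $\sum_i \sum_{\ell > i}$ yields
\begin{equation*}
\partial_{\alpha}\ell_n(\alpha,0) = \frac{K_n - 1}{\alpha} - \sum_{\ell=2}^{n} M_{n,\ell}\,\bigl[\psi(\ell-\alpha) - \psi(1-\alpha)\bigr],
\end{equation*}
using $\sum_{i=1}^{\ell-1}(i-\alpha)^{-1} = \psi(\ell-\alpha) - \psi(1-\alpha)$. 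Thus $\mathcal{S}_{p,n}(\alpha)$ is a linear functional of $\EE_p[K_n]$ and of the $\EE_p[M_{n,\ell}]$'s.

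Next, under Assumption~\ref{ass:ucj-yl04-iyy}, the second-order remainder on $\bar F_p$ translates, via a standard Abelian/Karamata argument for the occupancy scheme, into
\begin{equation*}
\EE_p[K_n] = L\,\Gamma(1-\alpha_{*})\,n^{\alpha_{*}}\bigl(1 + o(1/\log n)\bigr),\qquad \EE_p[M_{n,\ell}] = L\,\alpha_{*}\,\frac{\Gamma(\ell-\alpha_{*})}{\ell!}\,n^{\alpha_{*}}\bigl(1 + o(1/\log n)\bigr),
\end{equation*}
with enough uniformity in $\ell$ that summing against $\psi(\ell-\alpha) - \psi(1-\alpha) = O(\log \ell)$ preserves the $o(n^{\alpha_{*}}/\log n)$ remainder. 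Substituting yields $\mathcal{S}_{p,n}(\alpha) = n^{\alpha_{*}}\mathcal{S}_{\infty}(\alpha) + o(n^{\alpha_{*}}/\log n)$, where
\begin{equation*}
\mathcal{S}_{\infty}(\alpha) := \frac{L\,\Gamma(1-\alpha_{*})}{\alpha} - L\,\alpha_{*}\sum_{\ell=2}^{\infty}\frac{\Gamma(\ell-\alpha_{*})}{\ell!}\bigl[\psi(\ell-\alpha) - \psi(1-\alpha)\bigr].
\end{equation*}
The Beta-integral identity $\sum_{\ell\geq 1}\Gamma(\ell-\alpha)/\ell! = \Gamma(1-\alpha)/\alpha$ and its derivative in $\alpha$ give $\sum_{\ell\geq 2}\frac{\Gamma(\ell-\alpha)}{\ell!}[\psi(\ell-\alpha) - \psi(1-\alpha)] = \Gamma(1-\alpha)/\alpha^2$; specializing at $\alpha = \alpha_{*}$ then confirms $\mathcal{S}_{\infty}(\alpha_{*}) = 0$.

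To lower-bound $|\mathcal{S}_{\infty}(\alpha)|$, I would differentiate in $\alpha$:
\begin{equation*}
\partial_{\alpha}\mathcal{S}_{\infty}(\alpha) = -\frac{L\,\Gamma(1-\alpha_{*})}{\alpha^2} - L\,\alpha_{*}\sum_{\ell=2}^{\infty}\frac{\Gamma(\ell-\alpha_{*})}{\ell!}\sum_{j=1}^{\ell-1}\frac{1}{(j-\alpha)^2},
\end{equation*}
which is strictly negative on $(0,1)$ and blows up like $-\alpha^{-2}$ as $\alpha \to 0$ and like $-(1-\alpha)^{-2}$ as $\alpha \to 1$ (the latter via the $j=1$ contributions). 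Integrating these singular lower bounds from $\min(\alpha,\alpha_{*})$ to $\max(\alpha,\alpha_{*})$ yields $|\mathcal{S}_{\infty}(\alpha)| \geq B'|\alpha-\alpha_{*}|/\min(\alpha,1-\alpha)$ for a constant $B' > 0$ depending only on $(L,\alpha_{*})$. Under the hypothesis $\log(n)|\alpha-\alpha_{*}| > \epsilon$, the leading term $n^{\alpha_{*}}|\mathcal{S}_{\infty}(\alpha)|$ exceeds $B'\epsilon n^{\alpha_{*}}/[\log(n)\min(\alpha,1-\alpha)]$, which dominates the $o(n^{\alpha_{*}}/\log n)$ remainder; the claim follows with $B = B'$.

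The principal technical obstacle is the uniform Karlin expansion of $\EE_p[M_{n,\ell}]$ across the full range $2 \leq \ell \leq n$ with an $o(1/\log n)$ correction sharp enough that the summation against $\psi(\ell-\alpha) - \psi(1-\alpha)$ preserves the $o(n^{\alpha_{*}}/\log n)$ remainder uniformly for $\alpha$ in compact subsets of $(0,1)$. This rests on a careful second-order Abelian/Tauberian analysis of Poissonized integrals of the form $\int_0^1 n e^{-nx}\bar F_p(x)\,dx$ and their $\ell$-variants, exploiting precisely the tight remainder built into Assumption~\ref{ass:ucj-yl04-iyy}; the choice of remainder $o\bigl(1/(-x^{\alpha_{*}}\log x)\bigr)$ is calibrated to produce exactly the $1/\log n$ improvement needed at the level of the score.
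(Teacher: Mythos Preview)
Your outline is structurally the same as the paper's: compute the expected score, extract a leading term of order $n^{\alpha_{*}}$ that vanishes at $\alpha_{*}$ and has the right monotonicity/boundary behaviour, and show the remainder is $o(n^{\alpha_{*}}/\log n)$. Your limit functional $\mathcal{S}_{\infty}$ and its analysis are correct. The difference lies entirely in how the remainder is controlled, and here you have correctly identified---but not closed---the actual gap.

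The paper does not attempt to prove a uniform Karlin expansion of $\EE_p[M_{n,\ell}]$ over $2\le \ell\le n$. Instead it passes through the integral representation of the \emph{cumulative} counts (Lemma~\ref{lem:2}),
\[
\EE_p\Big[\sum_{\ell>k}M_{n,\ell}\Big]=(n-k)\binom{n}{k}\int_0^1 \bar F_p(x)\,x^k(1-x)^{n-k-1}\,\intd x,
\]
so that $\mathcal{S}_{p,n}(\alpha)$ becomes a sum over $k$ of Beta integrals of $\bar F_p$ weighted by $1/(k-\alpha)$. One then writes $\bar F_p(x)=Lx^{-\alpha_{*}}+\Delta(x)$, computes the $Lx^{-\alpha_{*}}$ part exactly (this reproduces your $\mathcal{S}_{\infty}$), and controls the $\Delta$-part by \emph{splitting the $k$-sum at} $k_n\asymp n^{1-\alpha_{*}}\log(n)$: for $k\le k_n$ one uses a Beta-kernel estimate (Lemma~\ref{lem:knc-clpq-b9h}) that turns the second-order hypothesis on $\bar F_p$ directly into an $o(1/\log n)$ multiplicative error, and for $k>k_n$ the trivial bound $\int_0^1|\Delta(x)|\,\intd x<\infty$ and $1/(k-\alpha)\le 1/(k_n-\alpha)$ suffice. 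This bypasses entirely the need for pointwise asymptotics of $\EE_p[M_{n,\ell}]$ at large $\ell$, which is where your route becomes delicate (the Karlin asymptotic is simply false as a multiplicative statement once $\ell$ is of order $n$).

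One further point: the error term is not $o(n^{\alpha_{*}}/\log n)$ uniformly over $(0,1)$, but rather $o\bigl(n^{\alpha_{*}}/[\min(\alpha,1-\alpha)\log n]\bigr)$, carrying the same boundary factor as the main term. In the paper's decomposition the $1/\alpha$ comes from the $k=0$ (i.e.\ $K_n$) contribution and the $1/(1-\alpha)$ from the $1/(k-\alpha)$ weights; your phrasing ``uniformly for $\alpha$ in compact subsets of $(0,1)$'' would not suffice for the full statement.
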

\begin{proof}
  By
  Lemma~\ref{lem:2},
  \begin{equation*}
    \mathcal{S}_{p,n}(\alpha)%
    =-\frac{1}{\alpha}- \sum_{k=1}^{n-1}\frac{n-k}{k-\alpha}\binom{n}{k}\int_0^1\bar{F}_{p}(x)x^k(1-x)^{n-k-1}\intd x%
    + \frac{n}{\alpha}\int_0^1\bar{F}_{p}(x)(1-x)^{n-1}\intd x.
  \end{equation*}
  We now define $\Delta(x) = \bar{F}_{p}(x) - Lx^{-\alpha_{*}}$, so that we can decompose
  $\mathcal{S}_{p,n}$ as
  \begin{align*}
    \mathcal{S}_{p,n}(\alpha)%
    &=- \frac{1}{\alpha}  + \frac{Ln}{\alpha}\int_0^1 x^{-\alpha_{*}}(1-x)^{n-1}\intd x%
      +  \frac{n}{\alpha} \int_0^1 \Delta(x)(1-x)^{n-1}\,\intd x\\
    &\quad%
    - Ln \sum_{k=1}^{n-1} \frac{n-k}{k-\alpha}\binom{n}{k}\int_0^1
      x^{k -\alpha_{*}}(1-x)^{n-k-1}\intd x\\
    &\quad%
    + \sum_{k=1}^{n-1} \frac{n-k}{k-\alpha}\binom{n}{k}%
      \int_0^1 \Delta(x)x^k(1-x)^{n-k-1}\intd x.
  \end{align*}
  That is, after splitting the last summation in two at
  $k_n = \lfloor n^{1-\alpha_{*}}\log(n)/\epsilon \rfloor$ for an arbitrary small $\epsilon > 0$,
  \begin{align*}
    \mathcal{S}_{p,n}(\alpha)%
    &=-\frac{1}{\alpha} + \frac{L\Gamma(1-\alpha_{*})\Gamma(n+1)}{\alpha\Gamma(n-\alpha_{*}+1)}%
    - \underbrace{\frac{L n!}{\Gamma(n-\alpha_{*}+1)} \sum_{k=1}^{n-1}
      \frac{\Gamma(k-\alpha_{*})}{k!} \cdot \frac{k - \alpha_{*} }{k-\alpha} }_{f_{1}(\alpha)} \\
    &\quad%
      +  \underbrace{ \frac{n}{\alpha}\int_0^1\Delta(x)(1-x)^{n-1}\,\intd x}_{f_{2}(\alpha)}%
    - \underbrace{\sum_{k=1}^{k_n} \frac{n-k}{k-\alpha}\binom{n}{k}%
      \int_0^1 \Delta(x)x^k(1-x)^{n-k-1}\intd x}_{f_{3}(\alpha)}\\
    &\quad%
       - \underbrace{\sum_{k=k_n}^{n-1} \frac{n-k}{k-\alpha}\binom{n}{k}%
      \int_0^1 \Delta(x)x^k(1-x)^{n-k-1}\intd x}_{f_{4}(\alpha)}.
  \end{align*}
  By Lemma~\ref{lem:knc-clpq-b9h} and by Stirling's formula,%
  \begin{align*}
    |f_{2}(\alpha)|%
    &= \frac{1}{\alpha} \cdot o\Big[\frac{n\Gamma(n)}{\Gamma(n+1-\alpha_{*})\log(n)} \Big]%
      = \frac{1}{\alpha} \cdot o\Big[\frac{n^{\alpha_{*}}}{\log(n)} \Big].
  \end{align*}
  Similarly,%
  \begin{align*}
    |f_{3}(\alpha)|%
    &= o\Big[\frac{1}{\log(n)} \Big]\cdot \sum_{k=1}^{k_n}\frac{n-k}{k-\alpha}\binom{n}{k} \frac{\Gamma(n-k)\Gamma(k+1-\alpha_{*})}{\Gamma(n+1-\alpha_{*})}\\
    &\leq o\Big[\frac{n^{\alpha_{*}}}{\log(n)} \Big]\cdot \sum_{k\geq 1}\frac{1}{k-\alpha} \frac{\Gamma(k+1-\alpha_{*})}{k!}\\
    &\leq \frac{1}{1 - \alpha} \cdot o\Big[\frac{n^{\alpha_{*}}}{\log(n)} \Big].
  \end{align*}
  Also,
  \begin{align*}
    |f_4(\alpha)|%
    &\leq \frac{1}{k_n-\alpha}%
      \int_0^1|\Delta(x)| \sum_{k=0}^{n-1}(n-k)\binom{n}{k}x^k(1-x)^{n-k-1}\intd x\\
    &= \frac{n}{k_n-\alpha} \int_0^1|\Delta(x)| \sum_{k=0}^{n-1}\binom{n-1}{k}x^k(1-x)^{n-k-1}\intd x\\
    &= \frac{n}{k_n-\alpha}\int_0^1|\Delta(x)|\intd x\\
    &= o\Big[\frac{n^{\alpha_{*}}}{\log(n)} \Big]
  \end{align*}
  because $\epsilon$ can be chosen as small as we want and because
  $\int_0^1\bar{F}_p(x)\intd x =1$ and $\int_0^1x^{-\alpha_{*}}\intd x < \infty$. Finally, we
  note that,
  \begin{align*}
    f_{1}(\alpha_{*})%
    &= \frac{Ln!}{\Gamma(n-\alpha_{*} + 1)} \sum_{k=1}^{n-1} \frac{\Gamma(k-\alpha_{*})}{k!}\\
    &= \frac{L \Gamma(1-\alpha_{*})\Gamma(n+1)}{\alpha_{*}\Gamma(n-\alpha_{*}+1)} - \frac{L n \Gamma(n-\alpha_{*})}{\alpha_{*}\Gamma(n-\alpha_{*}+1)}\\
    &= \frac{L \Gamma(1-\alpha_{*})\Gamma(n+1)}{\alpha_{*}\Gamma(n-\alpha_{*}+1)} + O(1),
  \end{align*}
  and,
  \begin{align*}
    f_{1}(\alpha) - f_{1}(\alpha_{*})%
    &=%
      \frac{L n!}{\Gamma(n-\alpha_{*}+1)} \sum_{k=1}^{n-1} \frac{\Gamma(k-\alpha_{*})}{k!} \Big\{ \frac{k- \alpha_{*}}{k-\alpha} - 1  \Big\}\\
    &= (\alpha - \alpha_{*})\cdot \frac{Ln!}{\Gamma(n-\alpha_{*}+1)} \sum_{k=1}^{n-1} \frac{\Gamma(k-\alpha_{*})}{k!(k-\alpha)}.
  \end{align*}
  Combining all of the previous equations, we find that,
  \begin{align*}
    \mathcal{S}_{p,n}(\alpha)%
    &= - \frac{1}{\alpha} + \Big(\frac{\alpha_{*}}{\alpha}-1\Big)\frac{L\Gamma(1-\alpha_{*})n!}{\alpha_{*}\Gamma(n-\alpha_{*} + 1)}\\
    &\quad
      -(\alpha - \alpha_{*})\cdot \frac{Ln!}{\Gamma(n-\alpha_{*}+1)} \sum_{k=1}^{n-1} \frac{\Gamma(k-\alpha_{*})}{k!(k-\alpha)}%
      + o\Big[\frac{n^{\alpha_{*}}}{\min(\alpha,1-\alpha)\log(n)} \Big].
  \end{align*}
  The conclusion of the proposition follows from the last display.
\end{proof}

\begin{prp}
  \label{pro:7}
  The following is true for all $\alpha \in (0,1)$ and all $p$
  \begin{equation*}
    \var_{p}(\partial_{\alpha}\ell_n(\alpha,0))%
    \leq \max\Big\{\frac{1}{\alpha^2},\frac{2}{(1-\alpha)^2}\Big\}\EE_p[K_n].
  \end{equation*}
  We also recall the result from \cite{Gne(07)} that if
  $\bar{F}_p(x) \sim Lx^{-\alpha^{*}}$ as $x \to 0$ [which is the case under
  Assumption~\ref{ass:ucj-yl04-iyy}] then $\EE_p[K_n] \sim L\Gamma(1-\alpha_{*})n^{\alpha_{*}}$ as
  $n\to \infty$.
\end{prp}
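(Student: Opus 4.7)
The plan is to decompose $\partial_{\alpha}\ell_n(\alpha,0)$ as an observation-level sum of bounded random variables whose sum of squares is deterministically bounded by a multiple of $K_n$, and then to promote this $L^2$ control to a variance bound via a martingale-difference argument. The final asymptotic $\EE_p[K_n] \sim L\Gamma(1-\alpha_{*})n^{\alpha_{*}}$ is a classical Karamata-type consequence of the regularly-varying Assumption~\ref{ass:ucj-yl04-iyy} applied to the identity $\EE_p[K_n] = \sum_{j\geq 1}(1-(1-p_j)^n)$, as in~\cite{Gne(07)}.

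First, differentiating~\eqref{eq:ft9-0iz5-m0m} gives $\partial_{\alpha}\ell_n(\alpha,0) = (K_n-1)/\alpha - \sum_{i=1}^{n-1}T_{i,n}/(i-\alpha)$ with $T_{i,n} = \sum_{\ell>i}M_{n,\ell}$. For each $j\in\{1,\ldots,n\}$ let $\ell_j$ denote the size of the block containing $X_j$ in the partition of $\{1,\ldots,j\}$, so that $K_j-K_{j-1}=\mathbb{1}(\ell_j=1)$ and $T_{i,j}-T_{i,j-1}=\mathbb{1}(\ell_j=i+1)$ for $i\geq 1$. A summation by parts then rewrites the score as
\begin{equation*}
  \partial_{\alpha}\ell_n(\alpha,0) + \frac{1}{\alpha} = \sum_{j=1}^n Z_j, \qquad Z_j := \frac{\mathbb{1}(\ell_j=1)}{\alpha} - \frac{\mathbb{1}(\ell_j\geq 2)}{\ell_j-1-\alpha},
\end{equation*}
with $|Z_j|\leq \max\{\alpha^{-1},(1-\alpha)^{-1}\}$. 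Squaring and regrouping observations by species gives $\sum_j Z_j^2 = K_n/\alpha^2 + \sum_{k}\sum_{i=1}^{N_k-1}(i-\alpha)^{-2}$. For $i\geq 2$ the elementary inequality $(i-\alpha)^{-2}\leq (i-1-\alpha)^{-1}-(i-\alpha)^{-1}$ telescopes to $\sum_{i=1}^{N_k-1}(i-\alpha)^{-2}\leq (1-\alpha)^{-2}+(1-\alpha)^{-1}\leq 2(1-\alpha)^{-2}$ on $\{N_k\geq 2\}$ and vanishes otherwise. Using $\#\{k:N_k\geq 2\}\leq K_n$ yields the deterministic pointwise control
\begin{equation*}
  \sum_{j=1}^n Z_j^2 \leq \max\!\Big\{\frac{1}{\alpha^2},\,\frac{2}{(1-\alpha)^2}\Big\}K_n
\end{equation*}
(after absorbing the smaller of the two contributions into the maximum).

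The main obstacle is the passage from this $L^2$ control to the variance bound. I would work with the martingale-difference decomposition $D_j := Z_j - \EE_p[Z_j\mid\mathcal{F}_{j-1}]$ with respect to $\mathcal{F}_j=\sigma(X_1,\ldots,X_j)$, which yields $\var_p(\sum_j D_j) = \sum_j \EE_p[\var_p(Z_j\mid\mathcal{F}_{j-1})]\leq \EE_p[\sum_j Z_j^2]$, already of the right form. The technical crux is to argue that the predictable drift $\sum_j \EE_p[Z_j\mid\mathcal{F}_{j-1}]$---which can be expanded explicitly in terms of $p$ and the coverage-type sums $\sum_{k:N_k^{(j-1)}=r}p_k$---does not inflate the variance beyond what the martingale part already accounts for. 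One handles it by a second summation by parts on the drift, reducing its running total variation to the same species-counting functional and bounding it once again by a constant multiple of $\EE_p[K_n]$. Combining these estimates and keeping the sharp constant in front then yields the claimed inequality.
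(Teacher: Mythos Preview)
Your sequential decomposition $\partial_{\alpha}\ell_n(\alpha,0)+1/\alpha=\sum_{j=1}^n Z_j$ is correct, but the rest of the argument has two real problems.

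First, the claimed pointwise bound $\sum_j Z_j^2 \leq \max\{\alpha^{-2}, 2(1-\alpha)^{-2}\}K_n$ is false. Every species contributes $1/\alpha^2$ at its first appearance, so a species with $N_k\geq 2$ contributes $1/\alpha^2+\sum_{i=1}^{N_k-1}(i-\alpha)^{-2}$; you cannot ``absorb'' that $1/\alpha^2$ into the other term. Your argument only yields $\sum_j Z_j^2 \leq (\alpha^{-2}+2(1-\alpha)^{-2})K_n$. Take $\alpha=1/3$ with all $N_k=2$: then $\sum_j Z_j^2 = (9+9/4)K_n>9K_n=\max\{\alpha^{-2},2(1-\alpha)^{-2}\}K_n$.

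Second, and more fundamentally, the passage from $\EE_p[\sum_j Z_j^2]$ to $\var_p(\sum_j Z_j)$ is not justified. The martingale piece gives $\var_p(\sum_j D_j)\leq \EE_p[\sum_j Z_j^2]$, but $\sum_j Z_j$ differs from $\sum_j D_j$ by the random predictable drift $\sum_j \EE_p[Z_j\mid\mathcal{F}_{j-1}]$, and the two are not orthogonal. That drift at step $j$ is a linear combination of the coverage probabilities $\mathfrak{p}_{r,j-1}$; bounding the variance of its running sum by $C\,\EE_p[K_n]$ is itself a nontrivial estimate that you have only sketched, and it would almost certainly cost further constants even if it works.

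The paper avoids both issues via the Efron--Stein inequality: with $\varphi=-\alpha\,\partial_\alpha\ell_n(\alpha,0)$ and $\varphi_i$ the same statistic on the sample with $X_i$ deleted, one has $\var_p(\varphi)\leq\sum_i\EE_p[(\varphi-\varphi_i)^2]$ directly, with no drift term to control. Because deleting $X_i$ perturbs only the \emph{final} frequency of its species, observation $i$ contributes $1$ if that species is a singleton and at most $2\alpha^2/(1-\alpha)^2$ otherwise; since $M_{n,1}+\sum_{\ell\geq 2}M_{n,\ell}=K_n$ is an exact partition of the species count, the $\max$ constant falls out cleanly after dividing by $\alpha^2$.
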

\begin{proof}
  We bound the variance of $\partial_{\alpha}\ell_n(\alpha)$ using an Efron-Stein argument. For simplicity
  we write $\varphi \equiv -\alpha \partial_{\alpha}\ell_n(\alpha)$. For every $n =1,\dots,n$, we define the random variables
  $C_{n,k} = \sum_{\ell=k}^nM_{n,\ell} = \sum_{j\geq 1}\1_{Y_{n,j} \geq k}$,
  $Y_{n,j}^{(i)} = Y_{n,j} - \1_{X_i = j}$, and
  $C_{n,k}^{(i)} = \sum_{j\geq1}^n\1_{Y_{n,j}^{(i)}\geq k}$. We note that
  $Y_{n,j}^{(i)}$ does not depend on $X_i$, so does $C_{n,k}^{(i)}$. Also remark
  that $\varphi = \sum_{k=1}^{n-1}\frac{\alpha}{k-\alpha}C_{n,k+1} - C_{n,1}$. Defining
  $\varphi_i = \sum_{k=1}^{n-1}\frac{\alpha}{k-\alpha}C_{n,k+1}^{(i)} - C_{n,1}^{(i)}$,
  we have by Efron-Stein's bound
  \cite[Theorem~3.1]{boucheron:lugosi:massart:2013}%
  \begin{align*}
    \var_{p}(\varphi)%
    &\leq%
      \sum_{i=1}^n\EE_{p}[(\varphi - \varphi_i)^2].
  \end{align*}
  But,
  \begin{align*}
    C_{n,k} - C_{n,k}^{(i)}%
    &= \sum_{j\geq 1}(\1_{Y_{n,j}\geq k} - \1_{Y_{n,j}^{(i)}\geq k})
    = \sum_{j\geq 1}\1_{Y_{n,j} = k}\1_{X_i = j},
  \end{align*}
  where the second equality follows because $Y_{n,j} \geq Y_{n,j}^{(i)} \geq Y_{n,j} - 1$ almost-surely, hence
  $\1_{Y_{n,j}\geq k} - \1_{Y_{n,j}^{(i)}\geq k}$ is either zero or one, and it is one
  iff $Y_{n,j}= k$ and $Y_{n,j}^{(i)} = k-1$; that is iff $Y_{n,j}=k$ and $X_i = j$. Therefore,
  \begin{align*}
    (\varphi - \varphi_i)^2%
    &= \Big(\sum_{k=1}^{n-1} \frac{\alpha}{k-\alpha}(C_{n,k+1} - C_{n,k+1}^{(i)}) - (C_{n,1} - C_{n,1}^{(i)})  \Big)^2\\
    &= \Big\{ \sum_{j\geq 1}\1_{X_i = j}\Big(\sum_{k=1}^{n-1} \frac{\alpha'}{k -\alpha'}\1_{Y_{n,j}=k+1} - \1_{Y_{n,j}=1} \Big) \Big\}^2.
  \end{align*}
  It follows,
  \begin{align*}
    \sum_{i=1}^n (\varphi - \varphi_i)^2%
    &= \sum_{j\geq 1}\Big(\sum_{i=1}^n\1_{X_i=j}\Big)\Big(\sum_{k=1}^{n-1} \frac{\alpha}{k -\alpha}\1_{Y_{n,j}=k+1} - \1_{Y_{n,j}=1} \Big)^2\\
    &= \sum_{j\geq 1}Y_{n,j}\Big(\sum_{k=1}^{n-1} \frac{\alpha}{k -\alpha}\1_{Y_{n,j}=k+1} - \1_{Y_{n,j}=1} \Big)^2\\
    &= \sum_{j\geq 1}\1_{Y_{n,j} = 1} + \sum_{j\geq 1}Y_{n,j}\1_{Y_{n,j}\ne 1}\Big(\sum_{k=1}^{n-1} \frac{\alpha}{k -\alpha}\1_{Y_{n,j}=k+1}\Big)^2\\
    &= M_{n,1}%
      + \sum_{k=1}^{n-1}\sum_{k'=1}^{n-1} \frac{\alpha}{k-\alpha}\frac{\alpha}{k'-\alpha} \sum_{j\geq 1}Y_{n,j}\1_{Y_{n,j}\ne 1}\1_{Y_{n,j}=k+1}\1_{Y_{n,j}=k'+1}\\
    &= M_{n,1} + \sum_{k=1}^{n-1} \Big( \frac{\alpha}{k - \alpha} \Big)^2 \sum_{j\geq 1}Y_{n,j}\1_{Y_{n,j}=k+1}.
  \end{align*}
  Hence we have shown,
  \begin{align*}
    \sum_{i=1}^n \EE_{p}[(\varphi - \varphi_i)^2]%
    &= \EE_{p}[M_{n,1}] + \sum_{k=1}^{n-1} \Big( \frac{\alpha}{k - \alpha} \Big)^2 (k+1)\EE_{p}[M_{n,k+1}]\\
    &\leq \EE_p[M_{n,1}] + \frac{2\alpha^2}{(1-\alpha)^2}\sum_{k=1}^{n-1}\EE_p[M_{n,k+1}]\\
    &\leq \max\Big\{1,\, \frac{2\alpha^2}{(1-\alpha)^2} \Big\}\EE_p[K_n],
  \end{align*}
  where the second line follows because $(k+1)/(k-\alpha)^2\leq 2/(1-\alpha)^2$ for all
  $k \geq 1$; and the third line because $\sum_{\ell=1}^nM_{n,\ell} = K_n$.
\end{proof}

\subsection{Auxiliary results for the proof of the item~\ref{item:rph-e4g6-jcz}
  in Proposition~\ref{prp:9cr-dlcr-2yc}}
\label{sec:54l-mu7q-4ek}

This section gathers a series of Lemma that are used in the proof of the Proposition~\ref{pro:6}.

\begin{lem}
  \label{lem:2}
  For any $p$, any $n \geq 1$, and any $0 \leq k \leq n-1$,
  \begin{equation*}
    \EE_{p}\Big[ \sum_{\ell=k+1}^n M_{n,\ell} \Big]%
    =%
    (n-k)\binom{n}{k}\int_0^{1}\bar{F}_{p}(x) x^{k}(1-x)^{n-k -1}\intd x.
  \end{equation*}
\end{lem}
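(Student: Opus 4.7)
The plan is to interpret $\sum_{\ell=k+1}^n M_{n,\ell}$ as counting the species appearing strictly more than $k$ times, and to then express its expectation species-by-species using the iid structure of the sample. Writing $p = \sum_{j\geq 1}p_j \delta_{s_j}$ and $Y_{n,j} = \sum_{i=1}^n \1_{X_i = s_j}$, we have $M_{n,\ell} = \sum_{j\geq 1}\1_{Y_{n,j}=\ell}$, so that
\begin{equation*}
  \sum_{\ell = k+1}^n M_{n,\ell} = \sum_{j\geq 1}\1_{Y_{n,j} \geq k+1}.
\end{equation*}
Taking expectations and using Fubini (all terms are nonnegative), I would reduce the problem to showing
\begin{equation*}
  \sum_{j\geq 1}\PP_p[Y_{n,j}\geq k+1] = (n-k)\binom{n}{k}\int_0^1 \bar{F}_p(x) x^k (1-x)^{n-k-1}\intd x.
\end{equation*}

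The next step is the classical identity between Binomial tail probabilities and the regularized incomplete Beta function: since $Y_{n,j}\sim \mathrm{Binomial}(n,p_j)$,
\begin{equation*}
  \PP_p[Y_{n,j}\geq k+1] = (n-k)\binom{n}{k}\int_0^{p_j} x^k(1-x)^{n-k-1}\intd x,
\end{equation*}
which follows from differentiating both sides in $p_j$ and checking that both vanish at $p_j = 0$, or equivalently by integration by parts on $\int_0^{p_j}x^k(1-x)^{n-k-1}\intd x$.

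Finally, I would swap summation and integration: writing $\int_0^{p_j}(\cdot)\,\intd x = \int_0^1\1_{x < p_j}(\cdot)\,\intd x$ and summing in $j$ produces $\sum_{j\geq 1}\1_{p_j > x} = \bar{F}_p(x)$, giving the claimed formula. There is no real obstacle here; the only point demanding a small amount of care is the justification of Fubini (which is immediate by nonnegativity) and the validity of the Binomial-to-Beta identity for arbitrary $0 \leq k \leq n-1$, but both are standard.
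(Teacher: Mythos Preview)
Your argument is correct and in fact slightly more streamlined than the paper's. The paper proceeds term-by-term: it first writes
\[
\EE_p[M_{n,\ell}]
= \binom{n}{\ell}\int_0^1 \bar{F}_p(x)\,x^{\ell-1}(1-x)^{n-\ell-1}(\ell - nx)\,\intd x
\]
by using the fundamental theorem of calculus on $x \mapsto x^{\ell}(1-x)^{n-\ell}$ and the same Fubini swap you describe, and then invokes the telescoping identity
\[
\sum_{\ell=k+1}^n \binom{n}{\ell}x^{\ell-1}(1-x)^{n-\ell-1}(\ell - nx)
= (n-k)\binom{n}{k}x^k(1-x)^{n-k-1}
\]
to sum over $\ell$. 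You instead sum the indicators first to get $\sum_{j}\1_{Y_{n,j}\geq k+1}$ and apply the Binomial-tail/incomplete-Beta identity in one shot, which bypasses the telescoping step entirely. Both routes are equivalent (the telescoping identity \emph{is} the Binomial--Beta relation in disguise), but your ordering of operations is the more economical one; the paper's version has the minor advantage of giving an explicit formula for each $\EE_p[M_{n,\ell}]$ along the way.
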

\begin{proof}
  Let $Y_{n,j} = \sum_{i\geq 1}\1_{X_i=j}$ so that
  $M_{n,\ell} \overset{d}{=} \sum_{j\geq 1}\1_{Y_{n,j}=\ell}$. Thus,
  \begin{align*}
    \EE_{p}[M_{n,\ell}]%
    &= \sum_{j\geq
      1}\EE_{p}[\1_{Y_{n,j}=\ell}]\\
    &= \binom{n}{\ell}\sum_{j\geq
      1}p_j^{\ell}(1-p_j)^{n-\ell}\\
    &= \binom{n}{\ell}\sum_{j\geq 1}\int_0^{p_j}\Big\{\ell
      x^{\ell-1}(1-x)^{n-\ell} - (n-\ell)x^{\ell}(1-x)^{n-\ell-1}\Big\}\intd x.
  \end{align*}
  Since $\bar{F}_{p}(x) = \sum_{j\geq 1}\1_{p_j > x}$  we deduce that
  \begin{equation*}
    \EE_{p}[M_{n,\ell}]%
    =%
    \binom{n}{\ell}\int_0^{1}\bar{F}_{p}(x) x^{\ell-1}(1-x)^{n-\ell -1} (\ell - nx)\intd x.
  \end{equation*}
  Finally, we observe that
  \begin{equation*}
    \sum_{\ell=k+1}^n \binom{n}{\ell} x^{\ell-1}(1-x)^{n-\ell-1}(\ell - nx)%
    = (n-k)\binom{n}{k}x^k(1-x)^{n-k -1}. \qedhere%
  \end{equation*}
\end{proof}

\begin{lem}
  \label{lem:knc-clpq-b9h}
  Suppose $(k_n)$ is a sequence such that for some $0 < \eta \leq 1$ it holds
  $k_n = O(n^{1-\eta})$ as $n \to \infty$. Then, under Assumption~\ref{ass:ucj-yl04-iyy}
  \begin{equation*}
    \sup_{k=0,\dots,k_n}%
    \frac%
    {\int_{0}^{1}|\bar{F}_p(x) - Lx^{-\alpha_{*}}| x^{k} (1-x)^{n-k-1}\intd x}%
    {\Gamma(n-k)\Gamma(k+1-\alpha_{*})/\Gamma(n+1-\alpha_{*})}%
    = o\Big[\frac{1}{\log(n)} \Big].
  \end{equation*}
\end{lem}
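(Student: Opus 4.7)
The plan is to rewrite the ratio as an expectation under a Beta probability measure and then exploit the vanishing of $x^{\alpha_{*}}|\Delta(x)|\cdot|\log x|$ as $x\to 0$, where I set $\Delta(x):=\bar{F}_p(x)-Lx^{-\alpha_{*}}$. First, the denominator equals the Beta function $B(k+1-\alpha_{*},n-k)$, so letting $\mu_k$ denote the $\mathrm{Beta}(k+1-\alpha_{*},n-k)$ distribution with expectation operator $\EE_{\mu_k}$, the ratio in question equals
\[
  \EE_{\mu_k}\bigl[X^{\alpha_{*}}|\Delta(X)|\bigr].
\]
For $k\leq k_n=O(n^{1-\eta})$, the mean of $\mu_k$ is $(k+1-\alpha_{*})/(n+1-\alpha_{*})=O(n^{-\eta})$ uniformly in $k$, so $\mu_k$ concentrates near the origin, which is exactly where Assumption~\ref{ass:ucj-yl04-iyy} gives the sharpest control on the integrand.

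Fix $\epsilon>0$ and use Assumption~\ref{ass:ucj-yl04-iyy} to choose $\delta=\delta(\epsilon)\in(0,1/e)$ such that $x^{\alpha_{*}}|\Delta(x)|\leq \epsilon/|\log x|$ for all $x\in(0,\delta]$. Note that $x^{\alpha_{*}}\bar{F}_p(x)$ is globally bounded on $(0,1]$ (finite limit $L$ at $0$, and $\bar{F}_p$ non-increasing hence bounded on $[\delta,1]$), so $x^{\alpha_{*}}|\Delta(x)|\leq M$ uniformly on $(0,1]$ for some constant $M$. I would then split
\[
  \EE_{\mu_k}\bigl[X^{\alpha_{*}}|\Delta(X)|\bigr]
  = \EE_{\mu_k}\bigl[X^{\alpha_{*}}|\Delta(X)|\,\1_{X\leq\delta}\bigr]
  + \EE_{\mu_k}\bigl[X^{\alpha_{*}}|\Delta(X)|\,\1_{X>\delta}\bigr].
\]

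For the tail term I would use $X^{\alpha_{*}}|\Delta(X)|\leq M$ together with Markov's inequality: $\mu_k(X>\delta)\leq \EE_{\mu_k}[X]/\delta=O(n^{-\eta}/\delta)$ uniformly in $k\leq k_n$, so the tail contribution is $O(n^{-\eta})$ and (with $\delta$ fixed) is $o(1/\log n)$. For the bulk term I would apply the logarithmic bound and split again at $x_0:=n^{-\eta/2}$, which is smaller than $\delta$ for $n$ large. On $\{X\leq x_0\}$ one has $|\log X|\geq (\eta/2)\log n$, hence $1/|\log X|\leq 2/(\eta\log n)$, so this piece contributes at most $2\epsilon/(\eta\log n)$ uniformly in $k$. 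On $\{x_0<X\leq\delta\}$, we have $1/|\log X|\leq 1/|\log\delta|$ while Markov gives $\mu_k(X>x_0)=O(n^{-\eta/2})$, so this piece is also $o(1/\log n)$.

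Combining, $(\log n)\cdot\sup_{0\leq k\leq k_n}\EE_{\mu_k}\bigl[X^{\alpha_{*}}|\Delta(X)|\bigr]\leq 2\epsilon/\eta + o(1)$, and since $\epsilon$ is arbitrary the ratio is $o(1/\log n)$ uniformly in $k$, as required. The subtle point is the \emph{exact} logarithmic improvement encoded in Assumption~\ref{ass:ucj-yl04-iyy}: without the $|\log x|^{-1}$ gain, the bulk term could only be bounded by a constant times the $\mu_k$-mass near $0$, which is $O(1)$ and does not suffice to beat $1/\log n$. All the remaining work is checking that the Markov bounds are uniform in $k\leq k_n$, which is immediate from the explicit formula $\EE_{\mu_k}[X]=(k+1-\alpha_{*})/(n+1-\alpha_{*})$.
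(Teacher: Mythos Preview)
Your proof is correct and follows the same overall strategy as the paper: split the integral near and away from the origin, use Assumption~\ref{ass:ucj-yl04-iyy} to get the $1/|\log x|$ gain in the bulk, and use concentration of the Beta kernel to kill the tail. Your execution is somewhat cleaner than the paper's: by rewriting the ratio as $\EE_{\mu_k}[X^{\alpha_*}|\Delta(X)|]$ under the $\mathrm{Beta}(k+1-\alpha_*,n-k)$ law you make all the bounds transparent, and you need only Markov's inequality on the first moment, whereas the paper uses a $k$-dependent inner split at $\sqrt{k/n}$ and Chebyshev with the Beta variance. Both routes give the same polynomial decay on the tail and the same $2\epsilon/(\eta\log n)$ on the inner piece, so nothing is lost; your $k$-independent split at $n^{-\eta/2}$ just makes the uniformity over $k\leq k_n$ immediate.
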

\begin{proof}
  For all $\epsilon > 0$ there is $x_0 > 0$ such that
  $|\bar{F}_p(x) - Lx^{-\alpha_{*}}| \leq \epsilon x^{-\alpha_{*}}/\log(1/x)$ for all $x \in (0,x_0)$.
  Further, $\bar{F}_p$ is monotone decreasing and $x \mapsto x^{-\alpha_{*}}$ too. Therefore,
  \begin{align*}
    \int_{0}^{1}|\bar{F}_p(x) - Lx^{-\alpha_{*}}| x^{k} (1-x)^{n-k-1}\intd x%
    &\leq \epsilon \int_0^{x_0}\frac{x^{k-\alpha_{*}}(1-x)^{n-k-1}\intd x}{\log(1/x)}\\
    &\quad%
      + \big(\bar{F}_p(x_0) + Lx_0^{-\alpha_{*}} \big) \int_{x_0}^1x^k(1-x)^{n-k-1}\intd x.
  \end{align*}
  But,
  \begin{align*}
    \int_{x_0}^1x^k(1-x)^{n-k-1}\intd x%
    &\leq \int_{0}^1x^k(1-x)^{n-k-1}\intd x\\
    &= \frac{\Gamma(k+1)\Gamma(n-k)}{\Gamma(n+1)}\\
    &= \frac{\Gamma(n-k)\Gamma(k+1-\alpha_{*})}{\Gamma(n+1-\alpha_{*})}\cdot%
      \frac{\Gamma(k+1)\Gamma(n+1-\alpha_{*})}{\Gamma(n+1)\Gamma(k+1-\alpha_{*})}.
  \end{align*}
  Hence, for a universal constant $A > 0$ [in particular not depending on $x_0$
  nor $k_n$]
  \begin{align*}
    \sup_{k=1,\dots,k_n}\frac{\int_{x_0}^1x^k(1-x)^{n-k-1}\intd x}%
    {\Gamma(n-k)\Gamma(k+1-\alpha_{*})/\Gamma(n+1-\alpha_{*})}%
    \leq \frac{Ak_n^{\alpha_{*}}}{n^{\alpha_{*}}}.
  \end{align*}
  On the other hand, for any $c > 1$ [since $k_n/n \to 0$ we assume without loss
  of generality that $k/n \ll x_0$]
  \begin{align*}
    \int_0^{x_0}\frac{x^{k-\alpha_{*}}(1-x)^{n-k-1}\intd x}{\log(1/x)}%
    &= \int_0^{ck/n} \frac{x^{k-\alpha_{*}}(1-x)^{n-k-1}\intd x}{\log(1/x)}%
      + \int_{ck/n}^{x_0} \frac{x^{k-\alpha_{*}}(1-x)^{n-k-1}\intd x}{\log(1/x)}\\
    &\leq \frac{\int_0^1x^{k-\alpha_{*}}(1-x)^{n-k-1}\intd x }{\log[n/(ck)]}%
      + \frac{\int_{ck/n}^1 x^{k-\alpha_{*}}(1-x)^{n-k-1}\intd x}{\log(1/x_0)}\\
    &= \frac{\Gamma(n-k)\Gamma(k+1-\alpha_{*})}{\Gamma(n+1-\alpha_{*})\log[n/(ck)]}%
      + \frac{\int_{ck/n}^1 x^{k-\alpha_{*}}(1-x)^{n-k-1}\intd x}{\log(1/x_0)}.
  \end{align*}
  But, introducing $Z$ a random variable that has a Beta distribution with
  parameters $k+1 - \alpha_{*}$ and $n-k$,
  \begin{align*}
    \int_{ck/n}^1 x^{k-\alpha_{*}}(1-x)^{n-k-1}\intd x%
    &= \frac{\Gamma(n-k)\Gamma(k+1-\alpha_{*})}{\Gamma(n+1-\alpha_{*})}\PP\big(Z > ck/n \big)
  \end{align*}
  Note that $\EE[Z] = (k+1-\alpha_{*})/(n+1-\alpha_{*})$, and thus if $c$ is taken large
  enough [but depending only on $\alpha_{*}$]
  \begin{align*}
    \sup_{k=1,\dots,k_n}\PP\big(Z > ck/n\big)%
    &= \sup_{k=1,\dots,k_n}\PP\big(Z - \EE[Z] > ck/(2n)\big)\\
    &\leq \sup_{k=1,\dots,k_n} \frac{\var(Z)}{c^2 k^2/(4n^2)}\\
    &= \sup_{k=1,\dots,k_n} \frac{4n^2 (k+1-\alpha_{*})(n-k)}{c^2k^2(n+1-\alpha_{*})^2(n+2-\alpha_{*})}\\
    &\leq \frac{8}{c^2}.
  \end{align*}
  Taking for instance $c = \sqrt{n/k}$, we obtain finally,
  \begin{align*}
    \sup_{k=1,\dots,k_n}%
    \frac%
    {\int_{0}^{1}|\bar{F}_p(x) - Lx^{-\alpha_{*}}| x^{k} (1-x)^{n-k-1}\intd x}%
    {\Gamma(n-k)\Gamma(k+1-\alpha_{*})/\Gamma(n+1-\alpha_{*})}%
    &\leq \frac{2\epsilon}{\log(n/k_n)}%
      + \frac{8\epsilon}{\log(1/x_0)}\frac{k_n}{n}\\%
    &\quad%
      + A\big(\bar{F}_p(x_0) + Lx_0^{-\alpha_{*}} \big) \frac{k_n^{\alpha_{*}}}{n^{\alpha_{*}}}.
  \end{align*}
  Since $k_n/n$ decays polynomially in $n$ and since this is true for all
  $\epsilon > 0$, the conclusion follows.
\end{proof}

\subsection{Proof of the item~\ref{item:6x9-2jie-dzi} in Proposition~\ref{prp:9cr-dlcr-2yc}}
\label{sec:6wf-urz8-luq}

We prove the item~\ref{item:6x9-2jie-dzi} in several steps. We define two
subsets of $\Phi$ by $\Phi_1 = \{(\alpha,\theta) \in \Phi\,:\, |\alpha - \hat{\alpha}_n^0| \leq b_1n^{-\delta} \}$ and
$\Phi_2 = \{(\alpha,\theta) \in \Phi\,:\, -\hat{\alpha}_n^0 + b_2 \leq \theta \leq b_2' \log(n) \}$, where
$b_1,b_2,b_2' > 0$ and $\delta > 0$ are constants to be chosen accordingly. Then, we
show that no maximizer of $L_n$ can be found in $\Phi_1^c$
[Proposition~\ref{pro:kl0-860e-jpy}], and neither in $\Phi_2^c$
[Proposition~\ref{pro:x8d-d96y-u1g}]. As a consequence, if there is a maximizer,
it must be in $\Phi_1 \cap \Phi_2$. We analyze $L_n$ over the domain $\Phi_1 \cap \Phi_2$ in
Proposition~\ref{pro:5i7-gs99-awg} which establishes the final result.

In the whole proof, we use the well-known fact \citep{Gne(07)} that $K_n \asymp n^{\alpha_{*}}$
and $M_{n,2} \asymp n^{\alpha_{*}}$ on an event of probability $1 + o(1)$. Hence, from now
we assume without loss of generality that $K_n \asymp n^{\alpha_{*}}$ and
$M_{n,2} \asymp n^{\alpha_*}$.

\begin{prp}
  \label{pro:kl0-860e-jpy}
  If the constant $b_1 > 0$ is large enough and the constant $\delta > 0$ is small
  enough, then there exists a constant $c > 0$ such that with probability
  $1 + o(1)$ as $n\to \infty$%
  \begin{equation*}
    \sup_{(\alpha,\theta)\in \Phi_1^c}\{\ell_n(\alpha,\theta) - \sup \ell_n\}%
    \leq -c n^{\alpha_{*}-\delta}.
  \end{equation*}
\end{prp}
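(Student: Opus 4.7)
The plan is to exploit the factorization~\eqref{eq:j29-mj0s-4ya} in order to reduce the two-dimensional optimization to an essentially one-dimensional analysis in $\alpha$. Write
\begin{equation*}
  \ell_n(\alpha,\theta) = \ell_n(\alpha,0) - \log \alpha + g_n(\alpha,\theta),
\end{equation*}
where $g_n(\alpha,\theta) = \log \Gamma(\theta+1) + \log \Gamma(\theta/\alpha + K_n) - \log \Gamma(\theta/\alpha + 1) - \log \Gamma(\theta+n)$ and $\ell_n(\cdot,0)$ is the Poisson--Kingman log-likelihood in~\eqref{eq:ft9-0iz5-m0m}. Put $\Psi_n(\alpha) = \sup_{\theta}\ell_n(\alpha,\theta)$, so that $\sup \ell_n = \sup_\alpha \Psi_n(\alpha)$. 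The key idea is that the sharpness of $\Psi_n$ at its maximizer is inherited from the concavity of $\ell_n(\cdot,0)$, while the $\theta$-part contributes only a lower-order term after optimization.

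First I would establish a uniform lower bound on $-\partial_\alpha^2 \ell_n(\alpha,0)$. Differentiating~\eqref{eq:ft9-0iz5-m0m} twice gives
\begin{equation*}
  -\partial_\alpha^2 \ell_n(\alpha,0) = \frac{K_n - 1}{\alpha^2} + \sum_{i=1}^{n-1}\frac{C_{n,i+1}}{(i-\alpha)^2},
\end{equation*}
and since $K_n \asymp n^{\alpha_{*}}$ on an event of probability $1 + o(1)$ (by~\cite{Gne(07)} under Assumption~\ref{ass:ucj-yl04-iyy}), the first term alone yields $-\partial_\alpha^2 \ell_n(\alpha,0) \gtrsim n^{\alpha_{*}}$ uniformly for $\alpha$ in any compact subset of $(0,1)$; the sum only strengthens this near the boundaries.

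Next I would control the $\theta$-reduction by showing $\Psi_n(\alpha) - \ell_n(\alpha,0) = O(\log n)$ uniformly, and that the mapping $\alpha \mapsto \Psi_n(\alpha) - \ell_n(\alpha,0)$ varies by at most $O(\log n)$ over any bounded range of $\alpha$. Using $\psi(x) = \log x + O(1/x)$, the stationary equation $\partial_\theta g_n(\alpha,\theta^{*}(\alpha)) = 0$ can be analyzed to show that the optimal $\theta^{*}(\alpha)$ remains in a controllable range and that Stirling's formula bounds $g_n(\alpha,\theta^{*}(\alpha))$ by a slowly varying function of $\alpha$. Combining with concavity of $\ell_n(\cdot,0)$ then gives, on $\Phi_1^c$,
\begin{equation*}
  \ell_n(\alpha,\theta) - \sup \ell_n \leq \Psi_n(\alpha) - \Psi_n(\hat{\alpha}_n^0) \leq -\tfrac{1}{2}\inf_\beta\{-\partial_\alpha^2 \ell_n(\beta,0)\}(\alpha - \hat{\alpha}_n^0)^2 + O(\log n),
\end{equation*}
where the infimum is taken along the segment $[\hat{\alpha}_n^0 \wedge \alpha, \hat{\alpha}_n^0 \vee \alpha]$. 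Since $(\alpha - \hat{\alpha}_n^0)^2 > b_1^2 n^{-2\delta}$ on $\Phi_1^c$, the dominant term is of order $n^{\alpha_{*}-2\delta}$ (which, for $\delta$ small enough, swallows the $O(\log n)$ remainder), so that choosing $b_1$ large enough yields the desired $-c n^{\alpha_{*}-\delta}$ bound.

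The main obstacle is the analysis of $g_n$: one must show that the optimal $\theta^{*}(\alpha)$ does not escape to $0$ or $\infty$ in a way that invalidates the Stirling approximation, and that its variation with $\alpha$ is tame enough that $\Psi_n$ effectively inherits the concavity of $\ell_n(\cdot,0)$. A secondary subtlety is controlling the second derivative uniformly when $\alpha$ approaches $0$ or $1$, where individual terms of $-\partial_\alpha^2 \ell_n(\alpha,0)$ blow up but still give the correct order $n^{\alpha_{*}}$ after summation; here one uses the a.s.\ asymptotics of $M_{n,\ell}$ under Assumption~\ref{ass:ucj-yl04-iyy} to control the contribution of the whole sum.
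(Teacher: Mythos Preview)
Your overall architecture---separate $\alpha$ and $\theta$ via the factorization, show that the $\theta$-optimisation contributes a lower-order correction to $\ell_n(\cdot,0)$, and then exploit the strong concavity of $\ell_n(\cdot,0)$---is exactly the paper's plan. The genuine gap is in the size of the $\theta$-correction.

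Your claim that $\alpha \mapsto \Psi_n(\alpha) - \ell_n(\alpha,0)$ varies by at most $O(\log n)$ is false. After Stirling, the $\theta$-part of the log-likelihood is governed by the stationary equation (see the paper's equation~\eqref{eq:c5y-097y-63j})
\[
\frac{\theta^*(\alpha)+\alpha}{(\theta^*(\alpha)+1)^{\alpha}} \;=\; \frac{\alpha K_n}{n^{\alpha}}\bigl(1+o_p(1)\bigr).
\]
When $\alpha < \alpha_*$ by a fixed amount, $K_n/n^{\alpha}\asymp n^{\alpha_*-\alpha}\to\infty$, which forces $\theta^*(\alpha)^{1-\alpha}\asymp n^{\alpha_*-\alpha}$, i.e.\ $\theta^*(\alpha)\asymp n^{(\alpha_*-\alpha)/(1-\alpha)}$, a genuine power of $n$. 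A direct Stirling computation then shows that $\sup_\theta g_n(\alpha,\theta)$ differs from its value at $\alpha=\hat{\alpha}_n^0$ by a term of order $\theta^*(\alpha)$, not $O(\log n)$. So the remainder you are trying to absorb is $O_p(n^{\alpha_*-\delta'})$ for some fixed $\delta'>0$ (determined by how far $\alpha$ is allowed to range), not $O(\log n)$; one then needs the $\delta$ defining $\Phi_1$ to be small relative to this $\delta'$.

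The paper deals with this in two stages that your sketch glosses over: first, a \emph{rough} localisation of $\alpha$ to a compact $[c_0,c_1]\subset(0,1)$, obtained from crude upper bounds on $\sup_{\theta}L_n(\alpha,\theta)/L_n(\alpha,0)$ (Lemmas~\ref{lem:xbo-ffe5-o0m}--\ref{lem:205-743d-w1y}) together with a lower bound on $\sup\ell_n$ (Lemma~\ref{lem:ny9-3mjf-qpx}); second, on this compact, a detailed study of $h_{n,\alpha}(\theta)$---its second derivative is a rational function whose numerator is a quadratic in $\theta$, whose roots are computed explicitly---yielding $\sup_\theta h_{n,\alpha}(\theta)=K_n\log K_n - n\log n + O_p(n^{\alpha_*-\delta'})$. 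This is the step where ``the optimal $\theta^*(\alpha)$ does not escape to $\infty$'' fails in the form you need, and must be replaced by a quantitative polynomial bound. Your final concavity argument then goes through only once $O(\log n)$ is replaced by $O_p(n^{\alpha_*-\delta'})$ and $\delta$ is chosen small enough.
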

\begin{proof}
  We first get a very rough localization and show that $\hat{\alpha}_n$ must be
  within $[c_0,c_1]$ with probability $1 + o(1)$, for suitable constants
  $0 < c_0 < c_1 < 1$. By combining the results of the
  Lemma~\ref{lem:ny9-3mjf-qpx}, \ref{lem:xbo-ffe5-o0m} and
  \ref{lem:205-743d-w1y}, we find that
  \begin{align*}
    \sup_{\alpha \in [c_0,c_1]^c}\sup_{\theta > -\alpha}\{\ell_n(\alpha,\theta) - \sup \ell_n\}%
    &\leq \sup_{\alpha\in [c_0,c_1]^c}\{-2\log(\alpha) + \ell_n(\alpha,0) - \ell_n(\hat{\alpha}_n^0,0)\}%
      + O_p(n^{\alpha_{*}}).
  \end{align*}
  For $c_0$ small enough (but bounded away from zero) and $c_1$ large enough
  (but bounded away from 1), the equation~\eqref{eq:ft9-0iz5-m0m} and the fact
  that $K_n \asymp n^{\alpha_{*}}$ and $M_{n,2} \asymp n^{\alpha_{*}}$ guarantee that
  the last display is smaller than $\lesssim -n^{\alpha_{*}}$ on an event of probability
  $1 + o(1)$.

  Then it is enough to consider $\alpha \in [c_0,c_1]$ and refine the previous result.
  We start with the region $\theta > c_2$ for some $c_2 > 0$ large enough. We can use
  Stirling's formula to obtain
 \begin{align*}
   \log \Big\{\frac{\Gamma(\theta+1)}{\Gamma(\theta/\alpha + 1)} \frac{\Gamma(\theta/\alpha+K_n)}{\Gamma(\theta + n)} \Big\}%
   &= (\theta + 1)\log(\theta + 1) + \frac{1}{2}\log \frac{\theta +n}{\theta+1}\\%
   &\quad- (\theta/\alpha + 1)\log(\theta/\alpha + 1)  - \frac{1}{2}\log \frac{\theta/\alpha + K_n}{\theta/\alpha + 1}\\
   &\quad+(\theta/\alpha + K_n)\log (\theta/\alpha + K_n) - K_n\\
   &\quad-(\theta + n)\log(\theta + n) +  n +  O\Big( \frac{1}{c_2} \Big).
 \end{align*}
 We observe that,
 \begin{align*}
   \frac{1}{2}\log \frac{\theta+n}{\theta+1} - \frac{1}{2}\log \frac{\theta/\alpha + K_n}{\theta/\alpha + 1}%
   = O\big( \log(n) \big).
 \end{align*}
 Hence
 \begin{align}
   \label{eq:krk-490w-bkj}
   \ell_n(\alpha,\theta)%
   \leq h_{n,\alpha}(\theta) + \ell_n(\alpha,0) + n - K_n + O\big(\log(n) \big),
 \end{align}
 where
 \begin{align*}
  h_{n,\alpha}(\theta)%
  &= (\theta + 1)\log(\theta + 1)%
    - (\theta/\alpha + 1)\log(\theta/\alpha + 1)\\
   &\quad%
  +(\theta/\alpha + K_n)\log (\theta/\alpha + K_n)
  -(\theta + n)\log(\theta + n).
 \end{align*}
 We now analyze the function $h_{n,\alpha}$. First,
 \begin{align}
   \notag
   h_{n,\alpha}'(\theta)%
   &= \frac{1}{\alpha} \log \frac{\theta/\alpha + K_n}{\theta/\alpha + 1}%
     - \log \frac{\theta +n}{\theta +1}\\
   \label{eq:mhs-nbkh-zxg}
   &= \frac{1}{\alpha}\log\Big[1 + \frac{K_n-1}{\theta/\alpha + 1}\Big]%
     - \log \Big[1 + \frac{n-1}{\theta+1} \Big]
 \end{align}
 We want to ensure the unique existence of a solution to $h_{n,\alpha}'(\theta) = 0$ and
 guarantee that solution maximizes $h_{n,\alpha}$. We differentiate a second time
 \begin{align*}
   h_{n,\alpha}''(\theta)%
   &= \frac{1}{\theta+1} - \frac{1}{\theta +n}%
     + \frac{1}{\alpha^2}\frac{1}{\theta/\alpha + K_n}%
     - \frac{1}{\alpha^2}\frac{1}{\theta/\alpha + 1}\\
   &= \frac{n-1}{(\theta+1)(\theta+n)}%
     - \frac{1}{\alpha^2}\frac{K_n-1}{(\theta/\alpha + K_n)(\theta/\alpha + 1)}\\
   &= \frac{n-1}{(\theta+1)(\theta+n)}%
     - \frac{K_n-1}{(\theta + \alpha K_n)(\theta + \alpha)}\\
   &= \frac{(n-1)(\theta + \alpha K_n)(\theta +\alpha) - (K_n-1)(\theta+1)(\theta+n)}{(\theta+1)(\theta+n)(\theta + \alpha K_n)(\theta+\alpha)}.
 \end{align*}
 So, the sign of $h_{n,\alpha}''(\theta)$ is the same as the sign of (recall
 $\theta > -\alpha > -1$)
 \begin{align*}
   N_{\theta}
   &= (n-1)(\theta + \alpha K_n)(\theta +\alpha) - (K_n-1)(\theta+1)(\theta+n)\\
   &= A\theta^2 %
     + B\theta%
     +C,
 \end{align*}
 where $A = n - K_n$, $B = (K_n+1)(n-1)\alpha - (n+1)(K_n-1)$, and
 $C = -n(K_n-1) + (n-1)K_n\alpha^2$. We observe that $A = n(1 + o_p(1))$,
 $B = -(1-\alpha)n K_n + O_p(n)$, and $C = -n K_n(1-\alpha^2) + O_p(n)$. By studying the
 polynomial $x \mapsto Ax^2 + Bx + C$ on $\Reals$, we find that it has two roots,
 given by
 \begin{equation*}
   r_1%
   = K_n(1-\alpha)(1 + o_p(1)),\qquad%
   r_2 = - \frac{1-\alpha^2}{1-\alpha} + o_p(1).
 \end{equation*}
 Remark that $r_2 + \alpha = -1 + o_p(1)$, so indeed with probability $1 + o(1)$ the
 root $r_2$ is strictly smaller than $-\alpha$. Hence, with probability $1 + o(1)$
 the equation $h_{n,\alpha}''(\alpha) = 0$ has only one solution on $(-\alpha,\infty)$. Further,
 $A > 0$ with probability $1 + o(1)$, then on a event of probability $1 + o(1)$
 we have $h_{n,\alpha}'' < 0$ on $(-\alpha,r_1)$ and $h_{n,\alpha}''(\theta) > 0$ on $(r_1,\infty)$.
 Also,
 \begin{align*}
   h_{n,\alpha}'(r_1)%
   &= \frac{1}{\alpha}\log\Big\{1 + \frac{K_n(1+o_p(1))}{(1-\alpha)K_n/\alpha} \Big\}%
     - \log\Big\{1 + \frac{n(1+o_p(1))}{(1-\alpha)K_n} \Big\}\\%
   &= - \log(n/K_n) + O_p(1).
 \end{align*}
 On the other hand with probability $1 + o(1)$ it is also the case that
 $\lim_{\theta \to-\alpha}h_{n,\alpha}'(\theta) = +\infty$ and $\lim_{\theta\to \infty} h_{n,\alpha}'(\theta) = 0$. Therefore, we
 have established that with probability $1 + o(1)$ we have $h_{n,\alpha}'$ is
 monotonically decreasing on $(-\alpha,r_1)$ with $\lim_{\theta \to -\alpha}h_{n,\alpha}'(\theta) = +\infty$ and
 $h_{n,\alpha}'(r_1) < 0$; and $h_{n,\alpha}'$ is monotonically increasing on $(r_1,\infty)$
 with $h_{n,\alpha}'(r_1) < 0$ and $\lim_{\theta \to \infty}h_{n,\alpha}'(\theta) = 0$. Thus, in
 probability, $h_{n,\alpha}'(\theta) = 0$ has a unique solution $\bar{\theta}$, and this
 solution must be within $(-\alpha,r_1)$. Clearly $\bar{\theta}$ is the global maximizer
 of $h_{n,\alpha}$ and it must be the case that $\bar{\theta} = o_p(K_n)$. Then, by
 equation~\eqref{eq:mhs-nbkh-zxg}, we deduce that $\bar{\theta}$ must satisfy
 \begin{align}
   \label{eq:c5y-097y-63j}
   \frac{\bar{\theta} + \alpha}{(\bar{\theta}+1)^{\alpha}} = \frac{\alpha K_n}{n^{\alpha}}(1 + o_p(1)).
 \end{align}
 So letting $E$ large enough, either we have $K_n \leq E n^{\alpha}$ and it must be that
 $\bar{\theta} \in (-\alpha,D)$ for some constant $D$, or $K_n > E n^{\alpha}$ and we must have
 that $\bar{\theta}^{1-\alpha} \asymp n^{-\alpha} K_n = (K_n/n)^{\alpha} K_n^{1-\alpha}$. This establishes that
 $\bar{\theta} = O_p\big[\max(1, K_n(K_n/n)^{\alpha/(1-\alpha)} )\big]$. It follows that,
 \begin{align*}
   \sup_{\theta > -\alpha}h_{n,\alpha}(\theta)%
   &\leq (\bar{\theta}/\alpha + K_n)\log(\bar{\theta}/\alpha + K_n) - (\bar{\theta} + n)\log(\bar{\theta} + n)\\
   &= K_n \log K_n - n \log n + O_p\Big[\bar{\theta} \max\Big(1, \log \frac{K_n^{1/\alpha}}{n} \Big) \Big]\\
   &= K_n \log K_n - n \log n + O_p\big( n^{\alpha_{*}-\delta} \big)
 \end{align*}
 for some $\delta > 0$, because $K_n \asymp n^{\alpha_*}$. Then, by combining the
 last display with equation~\eqref{eq:krk-490w-bkj} and the result of the
 Lemma~\ref{lem:ny9-3mjf-qpx} [after applying Stirling's formula to both the
 $\log\Gamma(K_n)$ and $\log \Gamma(n)$ terms], we find
 \begin{align*}
   \sup_{\substack{(\alpha,\theta) \in \Phi_1^c\\\theta > c_2}}\{\ell_n(\alpha,\theta) - \sup \ell_n\}%
   \leq \sup_{|\alpha - \hat{\alpha}_n^0| > b_1n^{-\delta}}\{\ell_n(\alpha,0) - \ell_n(\hat{\alpha}_n,0)\}%
   + O_p(n^{\alpha_{*} - \delta}).
 \end{align*}
 The equation~\eqref{eq:ft9-0iz5-m0m} and the fact that $K_n \asymp n^{\alpha_{*}}$ and
 $M_{n,2} \asymp n^{\alpha_{*}}$ guarantee that the last display is smaller than
 $\lesssim -n^{\alpha_{*}-\delta}$ on an event of probability $1 + o(1)$ if $b_1$ is large
 enough.

 It remains to consider the region $-\alpha < \theta \leq c_2$ and $\alpha \in [c_0,c_1]$. But in this
 case $\log \Gamma(\theta + 1) \lesssim 1$, $\log \Gamma(\theta/\alpha + 1) \gtrsim 1$, and by Stirling's formula
 $\log \Gamma(\theta/\alpha + K_n) = \log \Gamma(K_n) + (\theta/\alpha)\log(K_n) + o_p(1)$ and
 $\log(\theta / n) = \log \Gamma(n) + \theta \log(n) + o(1)$. By Lemma~\ref{lem:ny9-3mjf-qpx}
 \begin{align*}
   \sup_{\substack{(\alpha,\theta) \in \Phi_1^c\\\theta \leq c_2}}\{\ell_n(\alpha,\theta) - \sup \ell_n\}%
   &\leq \sup_{|\alpha - \alpha_n^0| > b_1n^{-\delta}}\{\ell_n(\alpha,0) - \ell_n(\hat{\alpha}_n,0)\}%
   + O_p\big( \log(n)\big).
 \end{align*}
 Hence the conclusion follows by the same argument as before.
\end{proof}

\begin{prp}
  \label{pro:x8d-d96y-u1g}
  There exists a constant $c > 0$ such that for all $0 < b_2 \ll \hat{\alpha}_n^0$
  small enough and $b_2' > 0$ large
  enough
  \begin{equation*}
    \sup_{(\alpha,\theta)\in \Phi_1 \cap \Phi_2^c}\{\ell_n(\alpha,\theta) - \sup \ell_n\}%
    \leq -c\min\Big\{ b_2' \log(n),\, \log\Big(\frac{\hat{\alpha}_n^0}{b_2}\Big) \Big\}.
  \end{equation*}
  with probability $1 + o(1)$ as $n \to \infty$
\end{prp}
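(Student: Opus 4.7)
The plan is to bound $\ell_n(\alpha,\theta) - \sup\ell_n$ by $\ell_n(\alpha,\theta) - \ell_n(\alpha, \bar{\theta}(\alpha))$, where $\bar{\theta}(\alpha)$ denotes the unique $\theta$-section maximizer of $\ell_n(\alpha, \cdot)$ constructed in the proof of Proposition~\ref{pro:kl0-860e-jpy}. The fixed-point equation~\eqref{eq:c5y-097y-63j}, combined with $K_n \asymp n^{\alpha_{*}}$ and the closeness of $\alpha$ to $\alpha_{*}$ forced by $\Phi_1$, gives $\bar{\theta}(\alpha) = O_p(1)$ uniformly over $\alpha \in \Phi_1$. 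Hence for $b_2 > 0$ small and $b_2' > 0$ large, $(\alpha, \bar{\theta}(\alpha)) \in \Phi_1 \cap \Phi_2$ on an event of probability $1+o(1)$, so $\sup\ell_n \geq \ell_n(\alpha, \bar{\theta}(\alpha))$ for every $\alpha \in \Phi_1$.

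From~\eqref{eq:j29-mj0s-4ya} I write $\ell_n(\alpha,\theta) - \ell_n(\alpha, \bar{\theta}) = A + B - C - D$, with
$$A = \log\Gamma(\theta+1) - \log\Gamma(\bar{\theta}+1),\quad B = \log\Gamma(\theta/\alpha + K_n) - \log\Gamma(\bar{\theta}/\alpha + K_n),$$
$$C = \log\Gamma(\theta/\alpha+1) - \log\Gamma(\bar{\theta}/\alpha+1),\quad D = \log\Gamma(\theta+n) - \log\Gamma(\bar{\theta}+n).$$
In the left region $\theta \leq -\hat{\alpha}_n^0 + b_2$, the quantity $\theta/\alpha + 1 = (\theta+\alpha)/\alpha$ is at most $2b_2/\hat{\alpha}_n^0$ for large $n$; since $\Gamma(x) \sim 1/x$ as $x \to 0^{+}$, this yields $-C \leq -\log(\hat{\alpha}_n^{0}/b_2) + O(1)$. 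Both $B$ and $D$ have large $\Gamma$-arguments, and Stirling gives $B - D = (\theta - \bar{\theta})[\alpha_{*}/\alpha - 1]\log n + O(1)$. Because $|\alpha - \hat{\alpha}_n^0| \leq b_1 n^{-\delta}$ and $|\hat{\alpha}_n^0 - \alpha_{*}| = o_p(1/\log n)$ by item~\ref{item:rph-e4g6-jcz} of Theorem~\ref{thm:9cr-dlcr-2yc}, one has $|\alpha_{*}/\alpha - 1|\log n = o_p(1)$, hence $B - D = O_p(1)$; and $A = O(1)$ since $\theta+1$ stays in a compact subset of $(0,\infty)$. Summing gives $\ell_n(\alpha,\theta) - \ell_n(\alpha, \bar{\theta}) \leq -\log(\hat{\alpha}_n^0/b_2) + O_p(1) \leq -\tfrac{1}{2}\log(\hat{\alpha}_n^0/b_2)$ for $b_2$ small enough.

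In the right region $\theta \geq b_2' \log n$, I integrate the derivative. Using $\psi(x) = \log x + O(1/x)$, one obtains $\partial_\theta \ell_n(\alpha, \theta) = h_{n,\alpha}'(\theta) + O(1/\theta)$. Expanding $h_{n,\alpha}'$ for $\theta$ of order $\log n$ with $\alpha$ close to $\alpha_{*}$ and $K_n \asymp n^{\alpha_{*}}$ gives $h_{n,\alpha}'(\theta) = -\frac{1-\alpha_{*}}{\alpha_{*}}\log \theta + O(1)$, which is $\leq -c\log\log n$ throughout an interval of length comparable to $b_2'\log n$ beginning at $b_2'\log n$. Integrating from $\bar{\theta}$ and using the monotonicity $\partial_\theta \ell_n(\alpha, \cdot) \leq 0$ beyond $\bar{\theta}$ established in the proof of Proposition~\ref{pro:kl0-860e-jpy}, I conclude $\ell_n(\alpha, \theta) - \ell_n(\alpha, \bar{\theta}) \leq -c' b_2' \log n \cdot \log\log n$ for every $\theta \geq b_2'\log n$, which dominates the target $-c b_2' \log n$.

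The main difficulty is the $B - D$ cancellation in the left region: individually these terms are of order $\log n$, and the reduction to $O_p(1)$ relies on the sharp rate $|\alpha - \alpha_{*}| = o_p(1/\log n)$ from item~\ref{item:rph-e4g6-jcz}. Making this rigorous requires uniform second-order Stirling estimates in both $\alpha$ and $\theta$ across the compact range $\theta \in (-\alpha, -\hat{\alpha}_n^0 + b_2)$, where $\theta - \bar{\theta}$ is merely bounded rather than small, so a Taylor expansion of $\ell_n$ around $\bar{\theta}$ is not available.
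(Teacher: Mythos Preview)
Your left-region argument is correct and in fact tighter than the paper's: the paper routes through the auxiliary function $\Psi_n$ and equation~\eqref{eq:12n-q8jt-x2q}, whereas your $A+B-C-D$ decomposition is more direct. Both ultimately exploit $\log\Gamma(\theta/\alpha+1)\to+\infty$ as $\theta+\alpha\to 0^+$, and your cancellation $B-D=O_p(1)$ via $|\alpha-\alpha_*|\log n = o_p(1)$ is exactly the mechanism at work in the paper as well.

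There is, however, a genuine gap in the right-region argument. You describe $\bar\theta(\alpha)$ as ``the unique $\theta$-section maximizer of $\ell_n(\alpha,\cdot)$ constructed in the proof of Proposition~\ref{pro:kl0-860e-jpy}'', but that proof constructs and analyzes the maximizer of $h_{n,\alpha}$, \emph{not} of $\ell_n(\alpha,\cdot)$; both the fixed-point relation~\eqref{eq:c5y-097y-63j} and the sign property $h_{n,\alpha}'\leq 0$ on $(\bar\theta,\infty)$ pertain to $h_{n,\alpha}$. Since $\partial_\theta\ell_n(\alpha,\theta) - h_{n,\alpha}'(\theta) = \tfrac{1-\alpha}{2(\theta+\alpha)(\theta+1)} + o(1) > 0$ for bounded $\theta$, knowing $h_{n,\alpha}'\leq 0$ does not yield $\partial_\theta\ell_n\leq 0$, and your appeal to ``monotonicity $\partial_\theta\ell_n(\alpha,\cdot)\leq 0$ beyond $\bar\theta$ established in Proposition~\ref{pro:kl0-860e-jpy}'' is unjustified. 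This matters: because your interval of strong negativity \emph{begins} at $b_2'\log n$, the conclusion $\ell_n(\alpha,\theta)-\ell_n(\alpha,\bar\theta)\leq -c'b_2'\log n\cdot\log\log n$ fails for $\theta$ just above $b_2'\log n$ unless the segment $[\bar\theta,\,b_2'\log n]$ already contributes nonpositively---which is precisely what the unestablished monotonicity of $\ell_n$ was supposed to provide. The repair is routine but necessary: integrate $h_{n,\alpha}'$ (not $\partial_\theta\ell_n$) from $\bar\theta$, use $h_{n,\alpha}'\leq 0$ on $(\bar\theta,\infty)$, and check separately that $\int_{\bar\theta}^\theta(\partial_\theta\ell_n - h_{n,\alpha}')\,ds = O_p(1)$ uniformly over $\theta$. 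The paper avoids this issue altogether by upper-bounding $\ell_n$ in terms of $h_{n,\alpha}$ through the Stirling inequality~\eqref{eq:krk-490w-bkj}, lower-bounding $\sup\ell_n$ separately via Lemma~\ref{lem:ny9-3mjf-qpx}, and then using a second-order Taylor expansion of $h_{n,\alpha}$ together with the estimate $h_{n,\alpha}''(\theta)\lesssim -1/\theta$ for $1<\theta=o(K_n)$; this sandwich approach cleanly decouples the Stirling errors from the shape analysis of $h_{n,\alpha}$.
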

\begin{proof}
  We first consider the region where $\theta > b_2' \log(n)$, we note
  that the bound of the equation~\eqref{eq:krk-490w-bkj} in the proof of
  Proposition~\ref{pro:kl0-860e-jpy} holds here too. Further, we have proven
  that $h_{n,\alpha}$ has a unique maximizer $\bar{\theta}$ which must satisfy the
  equation~\eqref{eq:c5y-097y-63j}. Here, we have in addition that
  $|\alpha - \hat{\alpha}_n^0| \lesssim n^{-\delta}$, and by the item~\ref{item:rph-e4g6-jcz} of the
  Theorem $\log(n)|\hat{\alpha}_n^0 - \alpha_*| = o_p(1)$. Since $K_n \asymp n^{\alpha_*}$,
  it must be that $\bar{\theta} = O_p(1)$. We also proved that $h_{n,\alpha}'(\theta) < 0$ for
  all $\theta > \bar{\theta}$, guaranteeing for all $z > 0$
  \begin{align*}
    \sup_{\theta > \bar{\theta} + z} h_{n,\alpha}(\theta)%
    &= h_{n,\alpha}(\bar{\theta} + z)%
      = h_{n,\alpha}(\bar{\theta}) + \frac{1}{2}h_{n,\alpha}''(\bar{\theta} + \bar{z})z^2
  \end{align*}
  for some $\bar{z} \in (0,z)$, by a Taylor expansion and because $\bar{\theta}$
  maximizes $h_{n,\alpha}$. By definition of $h_{n,\alpha}$, and since $\bar{\theta} = O_p(1)$
  and $K_n/n^{\alpha} \asymp 1$
  \begin{align*}
    h_{n,\alpha}(\bar{\theta})%
    &= K_n \log K_n - n \log(n)%
      + O_p(1).
  \end{align*}
  By the computations in the proof of Proposition~\ref{pro:kl0-860e-jpy}, if $\theta \leq 1$ we
  have
  \begin{align*}
    h_{n,\alpha}''(\theta)%
    &= \frac{A \theta^2 + B\theta +C}{(\theta+1)(\theta+n)(\theta+\alpha K_n)(\theta + \alpha)}\\
    &=(1 + o_p(1)) \frac{n \theta^2 - (1-\alpha)n K_n \theta - (1-\alpha^2)n K_n}{(\theta+1)(\theta+n)(\theta+\alpha K_n)(\theta + \alpha)}\\
    &\leq (1 + o_p(1)) \frac{- (1-\alpha^2)n K_n}{(\theta+1)(\theta+n)(\theta+\alpha K_n)(\theta + \alpha)}%
      \lesssim -1,
  \end{align*}
  while if $1 < \theta \leq o(K_n)$
  \begin{align*}
    h_{n,\alpha}''(\theta)%
    &\leq (1 + o_p(1)) \frac{- (1-\alpha)n K_n \theta}{(\theta+1)(\theta+n)(\theta+\alpha K_n)(\theta + \alpha)}
    \lesssim \frac{-1}{\theta}.
  \end{align*}
  Taking $z = D\log(n)$ for some constant $D > 0$ large enough, we necessarily
  have that $\bar{\theta} + \bar{z} = O(\log(n)) = o_p(K_n)$. It follows the existence
  of a universal constant $E > 0$ (in particular not depending on $D$), such
  that with probability $1 + o(1)$
  \begin{align*}
    \sup_{\theta > \bar{\theta} + D\log(n)}h_{n,\alpha}(\theta)%
    &\leq K_n \log K_n  -  n \log(n) - E\cdot D\log(n).
  \end{align*}
  By combining the last display with equation~\eqref{eq:krk-490w-bkj} and the
  result of the Lemma~\ref{lem:ny9-3mjf-qpx} [after applying Stirling's formula
  to both the $\log\Gamma(K_n)$ and $\log \Gamma(n)$ terms], when $b_2' > 0$ is large
  enough there is a constant $c > 0$ such that with probability $1 + o(1)$
  \begin{equation*}
    \sup_{\substack{(\alpha,\theta)\in \Phi_1\\\theta > b_2'\log(n)}}\{\ell_n(\alpha,\theta) - \sup \ell_n\}%
    \leq -c b_2' \log(n).
  \end{equation*}

  It remains to consider the region where $\theta < -\hat{\alpha}_n^0 + b_2$ and
  $(\alpha,\theta) \in \Phi_1$. But by Stirling's formula,
  \begin{align*}
    \log \Gamma(\theta/\alpha + K_n)%
    &= (\theta/\alpha + K_n)\log(\theta/\alpha + K_n) - (\theta/\alpha + K_n)%
      + \frac{1}{2}\log \frac{2\pi}{\theta/\alpha + K_n} + O(K_n^{-1})\\
    &= \frac{\theta \log K_n}{\alpha}
      + K_n \log K_n - K_n + \frac{1}{2}\log \frac{2\pi}{K_n}%
      + O\Big[\frac{\max(1,\theta^2)}{K_n} \Big],
  \end{align*}
  and
  \begin{align*}
    \log \Gamma(\theta + n)%
    &= \theta \log(n) + n \log(n) - n +\frac{1}{2}\log \frac{2\pi}{n}%
      + O\Big[\frac{\max(1,\theta^2)}{n} \Big].
  \end{align*}
  So we have because of Lemma~\ref{lem:ny9-3mjf-qpx} and another application of
  Stirling's formula%
  \begin{multline}
    \label{eq:12n-q8jt-x2q}
    \ell_n(\alpha,\theta) - \sup \ell_n%
    \leq \Psi_n(\alpha,\theta) - \Psi_n(\hat{\alpha}_n^0,\hat{T}_n)\\%
     - \log \frac{\alpha}{\hat{\alpha}_n^0}
      + \ell_n(\alpha,0) - \ell_n(\hat{\alpha}_n^0,0)%
      + O\Big[ \frac{\max(1,\theta^2)}{K_n} \Big].
  \end{multline}
  But $\Psi_n(\hat{\alpha}_n^0,\hat{T}_n) = O_p(1)$ by
  Lemma~\ref{lem:ny9-3mjf-qpx}, $\ell_n(\alpha, 0) - \ell_n(\hat{\alpha}_n^0,0) \leq 0$ by
  construction, and $\lim_{\theta\to -\alpha}\Psi_n(\alpha,\theta) = -\infty$. Furthermore,
  $\log \Gamma(z) \sim \log(1/z)$ as $z\to 0$.  Hence, there exists a constant $c > 0$
  such that for small enough $0 < b_2 \ll \hat{\alpha}_n^0$
  \begin{equation*}
    \sup_{\substack{(\alpha,\theta)\in \Phi_1\\\theta < -\hat{\alpha}_n^0 + b_2}}\{\ell_n(\alpha,\theta) - \sup \ell_n\}%
    \leq -c \log \Big(\frac{\hat{\alpha}_n^0}{b_2}\Big)
  \end{equation*}
  with probability $1+o(1)$.
\end{proof}

To facilitate the analysis of $L_n$ over $\Phi_1 \cap \Phi_2$ we introduce an
auxiliary estimator $\hat{T}_n$ of $\theta_{*}$ as in the next Proposition.

\begin{prp}
  \label{pro:auxiliary-est-theta}
  Define a function $\Psi_n : \Phi \to \Reals$ by
  $\Psi_n(\alpha,\theta) = \log \Gamma(\theta + 1) - \log \Gamma(\theta/\alpha + 1) + \theta \log \frac{K_n^{1/\alpha}}{n}$.
  Then for each fixed $\alpha \in (0,1)$ the map
  $\theta \mapsto \Psi_n(\alpha,\theta)$ is concave and has a unique maximizer
  on $(-\alpha,\infty)$. Call this maximizer $\hat{T}_{n,\alpha}$ and let
  $\hat{T}_n = \hat{T}_{n,\hat{\alpha}_n^0}$. If the
  Assumption~\ref{ass:ucj-yl04-iyy} is satisfied, then the sequence
  $(\hat{T}_n)_{n\geq 1}$ converges in probability to $\theta_{*} \in (-\alpha_*,\infty)$.
\end{prp}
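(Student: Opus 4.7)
\textbf{Proof proposal for Proposition~\ref{pro:auxiliary-est-theta}.}
The first two claims are purely analytic. The linear term in $\theta$ does not affect concavity, so the plan is to establish that $\theta \mapsto \log\Gamma(\theta+1) - \log\Gamma(\theta/\alpha+1)$ is strictly concave on $(-\alpha,\infty)$. Using the representation $\psi'(x) = \sum_{k\geq 0}(x+k)^{-2}$, its second derivative equals
\begin{equation*}
\psi'(\theta+1) - \frac{1}{\alpha^2}\psi'(\theta/\alpha+1)
= \sum_{k=0}^{\infty}\left[ \frac{1}{(\theta+1+k)^2} - \frac{1}{(\theta + \alpha(1+k))^2} \right],
\end{equation*}
which is strictly negative term-by-term because $\theta + 1 + k > \theta + \alpha(1+k)$ whenever $\alpha \in (0,1)$. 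For existence, $\log\Gamma(\theta/\alpha+1) \to +\infty$ as $\theta \downarrow -\alpha$, while Stirling gives $\log\Gamma(\theta+1) - \log\Gamma(\theta/\alpha+1) \sim -(\alpha^{-1}-1)\theta\log\theta$ as $\theta \to \infty$, a quantity that dominates the linear perturbation $\theta \log(K_n^{1/\alpha}/n)$. Thus $\Psi_n(\alpha,\cdot) \to -\infty$ at both boundaries, and strict concavity yields a unique interior maximizer $\hat{T}_{n,\alpha}$.

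For the convergence, the approach is to analyze the first-order condition
\begin{equation*}
\psi(\hat{T}_n+1) - \frac{1}{\hat{\alpha}_n^0}\psi\big(\hat{T}_n/\hat{\alpha}_n^0 + 1\big) = -\log\frac{K_n^{1/\hat{\alpha}_n^0}}{n}.
\end{equation*}
By item~\ref{item:rph-e4g6-jcz} of Theorem~\ref{thm:9cr-dlcr-2yc} we have $\log(n)|\hat{\alpha}_n^0 - \alpha_*| = o_p(1)$, while under Assumption~\ref{ass:ucj-yl04-iyy} the classical results on infinite occupancy schemes with regularly varying tails \citep{Gne(07)} give $K_n/n^{\alpha_*} \to L\Gamma(1-\alpha_*)$ in probability, hence $\log K_n = \alpha_*\log n + \log(L\Gamma(1-\alpha_*)) + o_p(1)$. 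Combining these,
\begin{equation*}
\log \frac{K_n^{1/\hat{\alpha}_n^0}}{n} = \Big(\frac{\alpha_*}{\hat{\alpha}_n^0} - 1\Big)\log n + \frac{\log(L\Gamma(1-\alpha_*))}{\hat{\alpha}_n^0} + o_p(1) \longrightarrow \frac{\log(L\Gamma(1-\alpha_*))}{\alpha_*}
\end{equation*}
in probability, the $\log n$-prefactor being neutralized precisely by the rate on $\hat{\alpha}_n^0 - \alpha_*$.

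To conclude, introduce the limiting concave map
\begin{equation*}
F(\theta) = \log\Gamma(\theta+1) - \log\Gamma(\theta/\alpha_*+1) + \frac{\theta}{\alpha_*}\log(L\Gamma(1-\alpha_*)),
\end{equation*}
whose first-order condition rearranges to $\psi(\theta_*/\alpha_*+1) - \alpha_*\psi(\theta_*+1) = \log(L\Gamma(1-\alpha_*))$, matching exactly the defining relation for $\theta_*$. An analogous term-by-term comparison to the one in the first paragraph shows that the left-hand side of this relation is strictly increasing in $\theta$ on $(-\alpha_*,\infty)$ with range $(-\infty,\infty)$, so the limiting equation has the unique solution $\theta_* \in (-\alpha_*,\infty)$ and $F$ is coercive with unique maximizer $\theta_*$. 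Setting $F_n(\theta) := \Psi_n(\hat{\alpha}_n^0,\theta)$, the previous paragraph gives pointwise $F_n(\theta) \to F(\theta)$ in probability on $(-\alpha_*,\infty)$; by a standard convex-analysis argument, pointwise convergence of finite concave functions upgrades to uniform convergence on compact subsets of the interior, and the coercivity of $F$ forces the sequence $(\hat{T}_n)$ to be tight and converge to $\theta_*$ in probability. The delicate point is precisely the joint handling of the two sources of randomness: the rate $\log(n)|\hat{\alpha}_n^0 - \alpha_*| = o_p(1)$ is exactly what is needed to prevent the $(\alpha_*/\hat{\alpha}_n^0 - 1)\log n$ term from destabilizing the limit of $K_n^{1/\hat{\alpha}_n^0}/n$, so the full strength of item~\ref{item:rph-e4g6-jcz} of Theorem~\ref{thm:9cr-dlcr-2yc}, not merely consistency of $\hat{\alpha}_n^0$, is used in an essential way.
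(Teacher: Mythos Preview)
Your proof is correct and follows essentially the same approach as the paper: the trigamma series identity for strict concavity, the combination of the $\log(n)$-rate on $\hat{\alpha}_n^0$ from item~\ref{item:rph-e4g6-jcz} with $K_n/n^{\alpha_*}\to L\Gamma(1-\alpha_*)$ to stabilize $K_n^{1/\hat{\alpha}_n^0}/n$, and the defining relation for $\theta_*$ to identify the limit. The only cosmetic difference is in the final step: the paper passes to the limit directly in the first-order condition and invokes continuity of the map $\theta\mapsto\psi(\theta/\alpha_*+1)-\alpha_*\psi(\theta+1)$, whereas you phrase it as argmax-stability under pointwise (hence locally uniform) convergence of concave functions---both are standard and equivalent here.
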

\begin{proof}
  Let $\alpha \in (0,1)$ arbitrary. Recall that $\psi$ is the derivative of
  $\log \Gamma$ by definition. Hence, by differentiating $\Psi_n$ with respect
  to $\theta$, we find that
  \begin{equation}
    \label{eq:jyg-7i8q-8zd}
    \partial_{\theta}\Psi_n(\alpha,\theta)%
    = \psi(\theta + 1) - \alpha^{-1}\psi(\theta/\alpha + 1) + \log \frac{K_n^{1/\alpha}}{n}.
  \end{equation}
  Standard analysis of the digamma function shows that
  $\lim_{z\to 0}\psi(z) = -\infty$ and $\psi(z) = \log(z) + O(z^{-1})$ as
  $z\to \infty$. Thus
  $\lim_{\theta\to -\alpha}\partial_{\theta}\Psi_n(\alpha,\theta) = + \infty$
  and $\lim_{\theta \to \infty}\partial_{\theta}\Psi_n(\alpha,\theta) = \infty$
  \citep{abramo}. Since $\theta \mapsto \partial_{\theta}\Psi_n(\alpha,\theta)$
  is continuous, there exists a solution in $\theta$ to
  $\partial_{\theta}\Psi_n(\alpha,\theta) = 0$. Furthermore, it is well-known
  that \citep{abramo}

  \begin{equation*}
    \psi(z + 1) = -\gamma + \sum_{k=1}^{\infty}\Big(\frac{1}{k} - \frac{1}{k+z}\Big),
  \end{equation*}
  where $\gamma$ is Euler's gamma constant, $\gamma \approx 0.577$. It follows
  that
  \begin{align}
    \label{eq:xt4-uy9j-ren}
    \partial_{\theta}^2\Psi_n(\alpha,\theta)%
    &= \sum_{k=1}^{\infty}\Big[ \frac{1}{(k+\theta)^2} - \frac{1}{(\alpha k + \theta)^2} \Big]%
      < 0.
  \end{align}
  This establishes that $\theta \mapsto \Psi_n(\alpha,\theta)$ has a unique
  maximizer on $(-\alpha,\infty)$ as well as the (strict) concavity of this map.
  By computations analoguous to the one made in the proof of
  Proposition~\ref{pro:6}, we find that under Assumption~\ref{ass:ucj-yl04-iyy}
  we have
  \begin{align*}
    \EE_p[K_n]
    &= n \int_0^1\bar{F}_p(x)(1-x)^{n-1}dx\\
    &= L\Gamma(1-\alpha_{*})n^{\alpha_{*}}\Big[1 + o\Big(\frac{1}{\log(n)}\Big)\Big],
  \end{align*}
  and by Lemma~A.3 in \cite{FN(21)} we have $\var_p(K_n) \leq \EE_p[K_n]$. Since
  $\log(n)|\hat{\alpha}_n - \alpha_0| = o_p(1)$ by the
  Item~\ref{item:rph-e4g6-jcz}, we find that
  $K_n/n^{\hat{\alpha}_n} = L\Gamma(1-\alpha_{*}) + o_p(1)$. Since
  $\partial_{\theta}\Psi_n(\hat{\alpha}_n^0,\hat{T}_{n}) = 0$, by
  equation~\eqref{eq:jyg-7i8q-8zd},
  \begin{align*}
    L \Gamma(1 - \hat{\alpha}_n^0)%
    &= \exp\{\psi(\hat{T}_{n}/\hat{\alpha}_n^0+1) - \hat{\alpha}_n^0\psi(\hat{T}_n + 1) \} + o_p(1).
  \end{align*}
  The final claim follows by a continuity argument.
\end{proof}

\begin{prp}
  \label{pro:5i7-gs99-awg}
  Any maximizer $(\hat{\alpha}_n,\hat{\theta}_n)$ of $\ell_n$ must satisfy
  $\hat{\alpha}_n = \hat{\alpha}_n^0 + O_p\big[\frac{\log(n)}{n^{\alpha_{*}}} \big]$ and
  $\hat{\theta}_n = \hat{T}_n + O_p\big[\frac{\log(n)^2}{n^{\alpha_{*}}} \big]$.
\end{prp}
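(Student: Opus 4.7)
The plan is to combine the localization results of Propositions~\ref{pro:kl0-860e-jpy} and~\ref{pro:x8d-d96y-u1g} with a careful second-order Taylor expansion of $\ell_n$ around $(\hat{\alpha}_n^0,\hat{T}_n)$, exploiting the factorization~\eqref{eq:j29-mj0s-4ya}. By those two preceding propositions, on an event of probability $1 + o(1)$ any maximizer $(\hat{\alpha}_n,\hat{\theta}_n)$ must lie in $\Phi_1 \cap \Phi_2$, so we may assume $|\hat{\alpha}_n - \hat{\alpha}_n^0| \lesssim n^{-\delta}$ and $\hat{\theta}_n \lesssim \log n$. On this region, applying Stirling's formula to both $\log\Gamma(\theta/\alpha + K_n)$ and $\log\Gamma(\theta + n)$, as already done in the proof of Proposition~\ref{pro:x8d-d96y-u1g}, yields the decomposition
\begin{equation*}
\ell_n(\alpha,\theta) = \Psi_n(\alpha,\theta) + \ell_n(\alpha,0) - \log\alpha + C_n + R_n(\alpha,\theta),
\end{equation*}
where $C_n$ depends only on $n$ and $K_n$ and the remainder satisfies $R_n = O_p(\log^2(n)/n^{\alpha_*})$ together with $\partial_\theta R_n = O_p(\log(n)/n^{\alpha_*})$ and $\partial_\alpha R_n = O_p(\log^2(n)/n^{\alpha_*})$ uniformly in $(\alpha,\theta)$ on the localization region.

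Next I would exploit the first-order conditions. The $\theta$-FOC reads $\partial_\theta\Psi_n(\hat{\alpha}_n,\hat{\theta}_n) = -\partial_\theta R_n(\hat{\alpha}_n,\hat{\theta}_n) = O_p(\log(n)/n^{\alpha_*})$. Since $\partial_\theta\Psi_n(\hat{\alpha}_n,\hat{T}_{n,\hat{\alpha}_n}) = 0$ by definition of $\hat{T}_{n,\alpha}$ and $-\partial_\theta^2\Psi_n$ is bounded below by a positive constant uniformly in the localization region by~\eqref{eq:xt4-uy9j-ren}, the mean value theorem yields
\begin{equation*}
\hat{\theta}_n - \hat{T}_{n,\hat{\alpha}_n} = O_p(\log(n)/n^{\alpha_*}).
\end{equation*}
For $\alpha$, I would reduce to the profile log-likelihood $\alpha \mapsto \ell_n(\alpha,\hat{\theta}_n(\alpha))$ and use $\partial_\alpha\ell_n(\hat{\alpha}_n^0,0)=0$ together with $\partial_\alpha^2\ell_n(\alpha,0) = -\hat{V}_n(1+o_p(1)) \asymp -n^{\alpha_*}$ on an event of probability $1+o(1)$. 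The key cross contribution comes from $\partial_\alpha\Psi_n(\hat{\alpha}_n^0,\hat{T}_n) = (\hat{T}_n/\hat{\alpha}_n^0)(\psi(\hat{T}_n+1) - \log n) = O_p(\log n)$, where the FOC defining $\hat{T}_n$ and the fact that $\hat{T}_n = O_p(1)$ (Proposition~\ref{pro:auxiliary-est-theta}) have been used. A one-step Taylor expansion then gives $-\hat{V}_n(\hat{\alpha}_n - \hat{\alpha}_n^0) + O_p(\log n) = 0$, and hence $\hat{\alpha}_n - \hat{\alpha}_n^0 = O_p(\log(n)/n^{\alpha_*})$.

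Finally I would decompose $\hat{\theta}_n - \hat{T}_n = (\hat{\theta}_n - \hat{T}_{n,\hat{\alpha}_n}) + (\hat{T}_{n,\hat{\alpha}_n} - \hat{T}_{n,\hat{\alpha}_n^0})$. The first difference has already been controlled. For the second, differentiating the identity $\partial_\theta\Psi_n(\alpha,\hat{T}_{n,\alpha}) = 0$ in $\alpha$ and applying the implicit function theorem gives $\partial_\alpha\hat{T}_{n,\alpha} = -\partial_\alpha\partial_\theta\Psi_n/\partial_\theta^2\Psi_n$, and a direct computation shows that $\partial_\alpha\partial_\theta\Psi_n(\alpha,\hat{T}_{n,\alpha})$ is dominated by its $-\log(K_n)/\alpha^2$ term in the localization region, so $|\partial_\alpha\hat{T}_{n,\alpha}| = O(\log n)$ throughout $\Phi_1$. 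Multiplying this sensitivity by the rate just obtained for $\hat{\alpha}_n - \hat{\alpha}_n^0$ yields $\hat{T}_{n,\hat{\alpha}_n} - \hat{T}_n = O_p(\log^2(n)/n^{\alpha_*})$, and combining both pieces gives the claimed $\hat{\theta}_n = \hat{T}_n + O_p(\log^2(n)/n^{\alpha_*})$. The hard part is the bookkeeping of logarithmic factors: the Stirling remainder is of order $\theta^2/K_n$, which at $\theta \asymp \log n$ is exactly the target rate, and the extra $\log n$ in the bound for $\hat{\theta}_n$ relative to $\hat{\alpha}_n$ originates entirely from $|\partial_\alpha\hat{T}_{n,\alpha}| = O(\log n)$, itself forced by the $\log K_n$ appearing in $\Psi_n$; showing that the neglected higher-order Taylor terms are indeed negligible uniformly on $\Phi_1\cap\Phi_2$ constitutes the main technical burden.
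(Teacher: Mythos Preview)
Your approach mirrors the paper's: same localization via Propositions~\ref{pro:kl0-860e-jpy}--\ref{pro:x8d-d96y-u1g}, same Stirling decomposition separating $\Psi_n$ from $\ell_n(\cdot,0)$, same identification that the extra $\log n$ comes from $\partial_\alpha\Psi_n$; you phrase things through the first-order conditions $\nabla\ell_n(\hat{\alpha}_n,\hat{\theta}_n)=0$ whereas the paper works with value comparisons $\ell_n(\alpha,\theta)\leq\sup\ell_n$, but that is cosmetic.

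There is one slip. You claim that $-\partial_\theta^2\Psi_n$ is bounded below by a positive constant uniformly on the localization region, citing~\eqref{eq:xt4-uy9j-ren}. But $\Phi_2$ only restricts $\theta\lesssim\log n$, and since $\psi'(z)\sim 1/z$ one has $-\partial_\theta^2\Psi_n(\alpha,\theta)=\alpha^{-2}\psi'(\theta/\alpha+1)-\psi'(\theta+1)\sim(1-\alpha)/(\alpha\theta)$ for large $\theta$, which is only $\asymp 1/\log n$ near the boundary of $\Phi_2$. Your mean-value step then does not deliver the $O_p(\log n/n^{\alpha_*})$ rate you state for $\hat{\theta}_n-\hat{T}_{n,\hat{\alpha}_n}$. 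The fix, which the paper carries out explicitly, is a preliminary iteration: first use concavity of $\theta\mapsto\Psi_n(\hat{\alpha}_n^0,\theta)$ to show that any maximizer satisfies $|\hat{\theta}_n-\hat{T}_n|\leq b_3 n^{-\delta/2}\log n$, hence $\hat{\theta}_n=O_p(1)$; only on this tighter region is $-\partial_\theta^2\Psi_n$ legitimately bounded below, and then the refined expansion yields the rates. In your FOC framework the natural substitute is monotonicity of $\partial_\theta\Psi_n(\hat{\alpha}_n,\cdot)$: if $|\hat{\theta}_n-\hat{T}_{n,\hat{\alpha}_n}|>1$ then $|\partial_\theta\Psi_n(\hat{\alpha}_n,\hat{\theta}_n)|\geq|\partial_\theta\Psi_n(\hat{\alpha}_n,\hat{T}_{n,\hat{\alpha}_n}\pm1)|\gtrsim 1$ (because $\hat{T}_{n,\hat{\alpha}_n}=O_p(1)$ by your implicit-function bound), contradicting $\partial_\theta\Psi_n(\hat{\alpha}_n,\hat{\theta}_n)=O_p(\log n/n^{\alpha_*})$. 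With this extra step inserted, your argument goes through.
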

\begin{proof}
  By Propositions~\ref{pro:kl0-860e-jpy} and \ref{pro:x8d-d96y-u1g}, it is
  enough to consider $(\alpha,\theta) \in \Phi_1 \cap \Phi_2$. It is easily seen that the bound
  of equation~\eqref{eq:12n-q8jt-x2q} also holds true here. Then, by a
  Taylor expansion, there is $\bar{\alpha}$ in a neighborhood of $\hat{\alpha}_n^0$ such
  that%
  \begin{align*}
    \Psi_n(\alpha,\theta) - \Psi_n(\hat{\alpha}_n^0,\hat{T}_{n})%
    &=\partial_{\alpha}\Psi_n(\bar{\alpha},\theta)(\alpha - \hat{\alpha}_n^0)%
      + \Psi_n(\hat{\alpha}_n^0,\theta) - \Psi_n(\hat{\alpha}_n^0,\hat{T}_n).
  \end{align*}
  But,%
  \begin{align*}
    \partial_{\alpha}\Psi_n(\bar{\alpha},\theta)%
    &= \frac{\theta}{\bar{\alpha}^2}\{-\log K_n + \psi(\theta/\bar{\alpha} + 1)\}%
    = O\big[ \theta\max(\log K_n,\log \theta  ) \big],
  \end{align*}
  where the second estimate follows because $\theta/\bar{\alpha}$ is bounded away from
  $-1$ with probability $1 + o(1)$ by the previous results, and hence
  $\psi(\theta/\bar{\alpha} + 1)$ is of order constant for small $-\hat{\alpha}_n^0 + b_2 \leq \theta \lesssim 1$ and
  never more than $O(\log(\theta))$ for $\theta\gtrsim 1$. So on $\Phi_1 \cap \Phi_2$, we
  must have
  \begin{align*}
    \Psi_n(\alpha,\theta) - \Psi_n(\hat{\alpha}_n^0,\hat{T}_n)%
    = \Psi_n(\hat{\alpha}_n^0,\theta) - \Psi_n(\hat{\alpha}_n^0,\hat{T}_{n}) + O_p\big[ \theta \log(n)\cdot(\alpha - \hat{\alpha}_n^0)  \big].
  \end{align*}
  By concavity of $\theta \mapsto \Psi_n(\hat{\alpha}_n^0,\theta)$
  \begin{align*}
    \sup_{|\theta - \hat{T}_{n}| > \varepsilon}\{\Psi_n(\hat{\alpha}_n^0,\theta) - \Psi_n(\hat{\alpha}_n^0,\hat{T}_{n}) \}%
    &\leq\sup_{x \in (0,\varepsilon)}\partial_{\theta}^2\Psi_n(\hat{T}_n + x,\hat{\alpha}_n^0)\cdot \frac{\varepsilon^2}{2}.
  \end{align*}
  But by the equation~\eqref{eq:xt4-uy9j-ren},
  \begin{align*}
    \partial_{\theta}^2\Psi_n(\alpha,\theta)%
    &\leq \frac{1}{(1+\theta)^2} - \frac{1}{(\alpha + \theta)^2}\\%
    &= \frac{-(1-\alpha)^2 - 2\theta(1-\alpha)}{(\alpha + \theta)(1 + \theta)}\\
    &\leq \frac{-(1-\alpha)^2 + 2\alpha(1-\alpha)}{(\alpha + \theta)(1 + \theta)}\\
    &= - \frac{(1-\alpha)^2}{(\alpha  + \theta)(1 + \theta)}.
  \end{align*}
  Hence, for $\varepsilon > 0$ small enough
  $\ell_n(\alpha,\theta) - \sup \ell_n \lesssim -\varepsilon^2 + O_p(n^{-\delta}\log(n)^2)$ whenever
  $|\theta - \hat{T}_n| \geq \varepsilon$ with $\theta \leq b_2'\log(n)$. So we have shown
  that any maximizer of $\ell_n$ must lie in the region
  $|\alpha - \hat{\alpha}_n^0| \leq b_0n^{-\delta}$ and
  $|\theta - \hat{T}_n| \leq b_3 n^{-\delta/2}\log(n)$ for some $b_3 > 0$ large
  enough. In this region, and by the same arguments as before, we can refine the
  bound of equation~\eqref{eq:12n-q8jt-x2q} onto
  \begin{align*}
    \ell_n(\alpha,\theta) - \sup \ell_n%
    &\leq \frac{1}{2}\partial_{\theta}^2\Psi_n(\bar{T},\hat{\alpha}_n^0)(\theta - \hat{T}_n)^2%
      + \frac{1}{2} \partial_{\alpha}^2\ell_n(\bar{\alpha},0)(\alpha - \hat{\alpha}_n^0)^2\\
    &\quad%
      + O_p\big[\log(n)\cdot (\alpha - \hat{\alpha}_n^0) + n^{-\alpha_0} \big]%
  \end{align*}
  for some $\bar{\alpha}$ in a neighborhood of $\hat{\alpha}_n^0$ and some $\bar{T}$ in a
  neighborhood of $\hat{T}_{n}$. The conclusion follows because with
  probability $1+o(1)$ it is true that $\partial_{\theta}^2\Psi_n(\bar{T},\hat{\alpha}_n^0) \lesssim -1$ by
  the previous, and $\frac{1}{2} \partial_{\alpha}^2\ell_n(\bar{\alpha},0) \lesssim -n^{\alpha_{*}}$ by
  equation~\eqref{eq:ft9-0iz5-m0m} and classical arguments.
\end{proof}

\subsection{Auxiliary results for the proof of the item~\ref{item:6x9-2jie-dzi}
  in Proposition~\ref{prp:9cr-dlcr-2yc}}
\label{sec:1ij-jg4e-sts}

This section gathers a series of Lemma that are used in the proof of the
Propositions~\ref{pro:kl0-860e-jpy}, \ref{pro:x8d-d96y-u1g}, and
\ref{pro:5i7-gs99-awg} of the previous section.

\begin{lem}
  \label{lem:ny9-3mjf-qpx}
  The following lower bound is true.
  \begin{equation*}
    \sup \ell_n%
    \geq \Psi_n(\hat{\alpha}_n^0,\hat{T}_n) - \log \hat{\alpha}_n^0 +\ell_n(\hat{\alpha}_n^0) + \log \Gamma(K_n) - \log \Gamma(n) + O_p(K_n^{-1}).
  \end{equation*}
  Furthermore, observe that $\Psi_n(\hat{\alpha}_n^0,\hat{T}_n) = O_p(1)$
  because of the item~\ref{item:6x9-2jie-dzi} of the Theorem.
\end{lem}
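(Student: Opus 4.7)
The plan is to obtain the lower bound by simply evaluating $\ell_n$ at the explicit point $(\hat{\alpha}_n^0,\hat{T}_n)$, since trivially $\sup \ell_n \geq \ell_n(\hat{\alpha}_n^0,\hat{T}_n)$. Using the factorization~\eqref{eq:j29-mj0s-4ya}, this value expands as
\begin{equation*}
\ell_n(\hat{\alpha}_n^0,\hat{T}_n) = \log\Gamma(\hat{T}_n + 1) - \log\Gamma(\hat{T}_n/\hat{\alpha}_n^0 + 1) + \log\Gamma(\hat{T}_n/\hat{\alpha}_n^0 + K_n) - \log\Gamma(\hat{T}_n + n) + \ell_n(\hat{\alpha}_n^0,0) - \log \hat{\alpha}_n^0.
\end{equation*}
Comparing this with the definition of $\Psi_n$ from Proposition~\ref{pro:auxiliary-est-theta}, the claim reduces to showing that
\begin{equation*}
\log\Gamma(\hat{T}_n/\hat{\alpha}_n^0 + K_n) - \log\Gamma(\hat{T}_n + n) = \log\Gamma(K_n) - \log\Gamma(n) + (\hat{T}_n/\hat{\alpha}_n^0)\log K_n - \hat{T}_n \log n + O_p(K_n^{-1}).
\end{equation*}

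The key tool is the Stirling estimate $\log\Gamma(z + a) - \log\Gamma(z) = a \log z + O\bigl(a(a+1)/z\bigr)$ as $z \to \infty$, valid uniformly for $a$ in any bounded set. By Proposition~\ref{pro:auxiliary-est-theta}, $\hat{T}_n \to \theta_{*}$ in probability, and by item~\ref{item:rph-e4g6-jcz} of Theorem~\ref{thm:9cr-dlcr-2yc}, $\hat{\alpha}_n^0 \to \alpha_{*}$ in probability; hence both $\hat{T}_n$ and $\hat{T}_n/\hat{\alpha}_n^0$ are bounded in probability, and both $K_n$ and $n$ tend to infinity on an event of probability $1+o(1)$ (since $K_n \asymp n^{\alpha_{*}}$). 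Applying Stirling with $(z,a) = (K_n,\hat{T}_n/\hat{\alpha}_n^0)$ produces an $O_p(K_n^{-1})$ remainder, and with $(z,a) = (n,\hat{T}_n)$ produces an $O_p(n^{-1})$ remainder that is absorbed in $O_p(K_n^{-1})$ because $\alpha_{*} < 1$. Substituting yields the claimed lower bound.

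For the ``furthermore'' claim, decompose $\Psi_n(\hat{\alpha}_n^0,\hat{T}_n) = \log\Gamma(\hat{T}_n + 1) - \log\Gamma(\hat{T}_n/\hat{\alpha}_n^0 + 1) + (\hat{T}_n/\hat{\alpha}_n^0)(\log K_n - \hat{\alpha}_n^0 \log n)$. The first two terms are $O_p(1)$ by continuity of $\log\Gamma$ on compact neighborhoods of $\theta_{*} + 1 > 0$ and $\theta_{*}/\alpha_{*} + 1 > 0$. For the third factor, write $\log K_n - \hat{\alpha}_n^0 \log n = (\log K_n - \alpha_{*}\log n) - (\hat{\alpha}_n^0 - \alpha_{*})\log n$; the first piece is $O_p(1)$ since $K_n \asymp n^{\alpha_{*}}$, and the second piece is $o_p(1)$ precisely because of the sharp rate $\log(n)|\hat{\alpha}_n^0 - \alpha_{*}| = o_p(1)$ from item~\ref{item:rph-e4g6-jcz}. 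The only real subtlety in the proof is exactly this cancellation: a naive bound $|\hat{\alpha}_n^0 - \alpha_{*}| = o_p(1)$ would leave a diverging $\log n$ factor, and it is the extra logarithmic rate for $\hat{\alpha}_n^0$ that makes the boundedness of $\Psi_n(\hat{\alpha}_n^0,\hat{T}_n)$ go through.
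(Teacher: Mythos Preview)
Your proof is correct and follows essentially the same route as the paper: start from $\sup \ell_n \geq \ell_n(\hat{\alpha}_n^0,\hat{T}_n)$, expand via the factorization~\eqref{eq:j29-mj0s-4ya}, and reduce to a Stirling-type estimate on $\log\Gamma(\hat{T}_n/\hat{\alpha}_n^0 + K_n)$, $\log\Gamma(\hat{T}_n + n)$ versus $\log\Gamma(K_n)$, $\log\Gamma(n)$, using $\hat{T}_n = O_p(1)$ and $\hat{\alpha}_n^0 \to \alpha_{*}$. Your ``furthermore'' argument is also the intended one, and your observation that it is precisely the $\log(n)$ rate for $\hat{\alpha}_n^0 - \alpha_{*}$ that kills the potentially divergent $(\hat{\alpha}_n^0 - \alpha_{*})\log n$ term is exactly the point.
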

\begin{proof}
  Obviously, $\sup \ell_n \geq \ell_n(\hat{\alpha}_n^0,\hat{T}_n)$. Then, the
  conclusion follows by equation~\eqref{eq:j29-mj0s-4ya}, the definition of
  $\Psi_n$, by Stirling's formula applied to
  $\Gamma(\hat{T}_{n}/\hat{\alpha}_n^0 + K_n)$, to
  $\Gamma(\hat{T}_{n} + n)$, to $\Gamma(K_n)$, and to $\Gamma(n)$, and because
  $K_n = O_p(n^{\alpha_{*}})$, and $\hat{T}_{n} > -\hat{\alpha}_n^0$.
\end{proof}

\begin{lem}
  \label{lem:xbo-ffe5-o0m}
  For all $\theta > 0$ and for all $\alpha \in (0,1)$
  \begin{align*}
    L_n(\alpha,\theta)%
    &\leq \frac{\Gamma(K_n) e^{K_n}}{\alpha \Gamma(n)} L_n(\alpha,0).
  \end{align*}
\end{lem}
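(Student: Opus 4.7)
The plan is to reduce the lemma to a Gamma function inequality via the factorization in equation~\eqref{eq:j29-mj0s-4ya}, and then establish that inequality by a rising-factorial computation. Substituting \eqref{eq:j29-mj0s-4ya} into the claim, the bound $L_n(\alpha,\theta) \leq \frac{\Gamma(K_n)e^{K_n}}{\alpha\Gamma(n)}L_n(\alpha,0)$ becomes
\[
  R(\theta) \,:=\, \frac{\Gamma(\theta+1)\Gamma(\theta/\alpha + K_n)}{\Gamma(\theta/\alpha + 1)\Gamma(\theta+n)} \,\leq\, e^{K_n}\,\frac{\Gamma(K_n)}{\Gamma(n)},
\]
and since $R(0) = \Gamma(K_n)/\Gamma(n)$, the task is to prove $R(\theta)/R(0) \leq e^{K_n}$ for all $\theta > 0$.

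I would expand each Gamma ratio as a rising factorial and use the algebraic identity $\theta/\alpha + i = (\theta + i\alpha)/\alpha$, which yields
\[
  \frac{R(\theta)}{R(0)} \,=\, \prod_{i=1}^{K_n-1}\frac{\theta + i\alpha}{i\alpha}\cdot\prod_{i=1}^{n-1}\frac{i}{\theta + i}.
\]
The idea is to pair the $K_n - 1$ factors in the first product with the first $K_n - 1$ factors in the second. For each $1 \leq i \leq K_n - 1$ a direct computation gives
\[
  \log\frac{\theta + i\alpha}{i\alpha} + \log\frac{i}{\theta+i} \,=\, \log\frac{\theta + i\alpha}{\alpha(\theta+i)} \,=\, \log\frac{1}{\alpha} + \log\frac{\theta + i\alpha}{\theta + i} \,\leq\, \log\frac{1}{\alpha},
\]
since $i\alpha \leq i$. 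Summing the paired terms and using that the unpaired tail $\prod_{i=K_n}^{n-1} i/(\theta + i)$ is bounded above by $1$ yields the crude estimate $\log[R(\theta)/R(0)] \leq (K_n-1)\log(1/\alpha)$, which on its own is insufficient when $\alpha$ is small.

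The main obstacle is therefore to recover the absorbing factor $e^{K_n}$ in regimes where $\log(1/\alpha)$ is large. My refinement would be to differentiate $\log R$ in $\theta$ to locate the unique interior critical point $\theta^{*}$, which satisfies the balance equation $\sum_{i=1}^{K_n-1}(i\alpha + \theta^{*})^{-1} = \sum_{i=1}^{n-1}(i+\theta^{*})^{-1}$. Using concavity of $\log(1+x)$ and an integral comparison $\sum_{i=1}^{K_n-1}(i+\theta)^{-1} \leq \log(1 + (K_n-1)/\theta)$, together with the elementary bound $\theta\log(1+c/\theta) \leq c$, one gets that the paired contribution is actually controlled by $(1-\alpha)(K_n-1)/\alpha$ times a factor involving $\theta^{*}$. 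The final step is to observe that the slack coming from $-\sum_{i=K_n}^{n-1}\log(1+\theta^{*}/i)$ at the critical point compensates the excess in $\log(1/\alpha)$ to within an additive $K_n$. Collecting these bounds then yields $\log[R(\theta^{*})/R(0)] \leq K_n$, and monotonicity away from $\theta^{*}$ gives the claim for all $\theta > 0$. The hard part is executing this compensation argument sharply; I expect it to be the bottleneck of the proof.
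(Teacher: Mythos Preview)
Your reduction via \eqref{eq:j29-mj0s-4ya} to the inequality $R(\theta)/R(0)\le e^{K_n}$ is correct, and the product expansion
\[
\frac{R(\theta)}{R(0)}=\prod_{i=1}^{K_n-1}\frac{\theta+i\alpha}{i\alpha}\cdot\prod_{i=1}^{n-1}\frac{i}{\theta+i}
\]
is fine. But the proof stops precisely where the content begins. The crude pairing gives $(K_n-1)\log(1/\alpha)$, which is useless for small $\alpha$, and the ``refinement'' you sketch is not an argument: you assert that the tail slack $-\sum_{i=K_n}^{n-1}\log(1+\theta^*/i)$ compensates the excess $\log(1/\alpha)$ ``to within an additive $K_n$'' without any mechanism tying $\theta^*$ to $\alpha$ quantitatively. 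The balance equation you write down does determine $\theta^*$, but nothing in your outline extracts from it a bound of the form $\log R(\theta^*)\le K_n$ uniformly in $\alpha\in(0,1)$. Carrying this out via harmonic-sum and integral-comparison estimates is delicate exactly when $\alpha\to 0$ (where $\theta^*$ may be large and the two products have very different scales), and your sketch gives no indication of how the constants close.

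The paper avoids this elementary route entirely. It writes the Gamma ratio as an expectation: for independent $Z\sim\mathrm{Gamma}(\theta/\alpha,1)$ and $W\sim\mathrm{Gamma}(K_n,1)$, a Beta-integral computation gives
\[
\frac{\Gamma(\theta)\,\Gamma(\theta/\alpha+K_n)}{\Gamma(\theta/\alpha)\,\Gamma(\theta+n)}
=\frac{\Gamma(K_n)}{\alpha\,\Gamma(n)}\,\E\!\Big[e^{Z}\Big(1-\tfrac{Z^{1/\alpha}}{W^{1/\alpha}}\Big)^{n-1}\1_{\{W>Z\}}\Big].
\]
Dropping the $(1-\cdot)^{n-1}$ factor and applying the Chernoff bound $\P(W>Z)\le e^{K_n-Z}(K_n/Z)^{K_n}$ yields $\E[\min\{e^Z,\,e^{K_n}(K_n/Z)^{K_n}\}]\le e^{K_n}$ in one line, uniformly in $\alpha$ and $\theta>0$. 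This representation is the missing idea; your product-factor analysis, as written, does not reach the conclusion.
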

\begin{proof}
  The starting point is the following observation. If
  $Z \sim \gammaDist(\theta/\alpha,1)$ and $W \sim \gammaDist(K_n)$ are independent random
  variables, then
  \begin{align*}
    \EE\Big[e^Z\Big(1 - \frac{Z^{1/\alpha}}{W^{1/\alpha}}\Big)^{n-1}\1_{W > Z}\Big]%
    &= \frac{1}{\Gamma(\theta/\alpha)\Gamma(K_n)}\int_0^{\infty} w^{K_n-1}e^{-w}\int_0^w z^{\theta/\alpha-1}\Big(1 - \frac{z^{1/\alpha}}{w^{1/\alpha}}\Big)^{n-1}\intd z \intd w\\
    &= \frac{\alpha}{\Gamma(\theta/\alpha)\Gamma(K_n)}\int_0^{\infty}w^{K_n + \theta/\alpha - 1}e^{-w}\intd w \int_0^1u^{\theta-1}(1-u)^{n-1}\intd u\\
    &= \frac{\alpha \Gamma(\theta/\alpha + K_n)}{\Gamma(\theta/\alpha)\Gamma(K_n)} \frac{\Gamma(n) \Gamma(\theta)}{\Gamma(\theta+n)}.
  \end{align*}
  Therefore, by Chernoff's bound on the Gamma distribution
  \begin{align*}
    \frac{\Gamma(\theta)}{\Gamma(\theta/\alpha)} \frac{\Gamma(\theta/\alpha + K_n)}{\Gamma(\theta+n)}%
    &= \frac{\Gamma(K_n)}{\alpha \Gamma(n)}\EE\Big[e^Z\Big(1 - \frac{Z^{1/\alpha}}{W^{1/\alpha}}\Big)^{n-1}\1_{W > Z}\Big]\\
    &\leq \frac{\Gamma(K_n)}{\alpha \Gamma(n)}\EE[e^Z\PP(W > Z)]\\
    &\leq \frac{\Gamma(K_n)}{\alpha\Gamma(n)}\EE\Big[e^Z\min\Big\{1, e^{K_n-Z}\Big(\frac{K_n}{Z}\Big)^{K_n} \Big\} \Big]\\
    &= \frac{\Gamma(K_n)}{\alpha \Gamma(n)}\EE\Big[\min\Big\{e^Z,\, e^{K_n}\Big(\frac{K_n}{Z}\Big)^{K_n}
      \Big\} \Big]\\
    &\leq \frac{\Gamma(K_n) e^{K_n}}{\alpha \Gamma(n)}.
  \end{align*}
  Then, since $\theta > 0$,
  \begin{align*}
    L_n(\alpha,\theta)%
    &=%
      \frac{\Gamma(\theta)}{\Gamma(\theta/\alpha)} \frac{\Gamma(\theta/\alpha + K_n)}{\Gamma(\theta+n)} L_n(\alpha,0)
    \leq \frac{\Gamma(K_n) e^{K_n}}{\alpha \Gamma(n)} L_n(\alpha,0).
  \end{align*}
\end{proof}

\begin{lem}
  \label{lem:205-743d-w1y}
  Let $K_n \geq 2$. Then, there is a generic constant $C> 0$ such that for all
  $\alpha \in (0,1)$ and all $-\alpha < \theta \leq 0$
  \begin{align*}
    L_n(\alpha,\theta)%
    &\leq \frac{Cn \Gamma(K_n)}{\alpha^2\Gamma(n)} L_n(\alpha,0).
  \end{align*}
\end{lem}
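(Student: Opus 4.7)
The starting point is the factorization of the likelihood in \eqref{eq:j29-mj0s-4ya}, which (after absorbing the $1/\alpha$) gives
\[
\frac{L_n(\alpha,\theta)}{L_n(\alpha,0)} = \frac{1}{\alpha} \cdot \frac{\Gamma(\theta+1)}{\Gamma(\theta/\alpha+1)} \cdot \frac{\Gamma(\theta/\alpha+K_n)}{\Gamma(\theta+n)}.
\]
The plan is to bound each of the three $\Gamma$-ratios separately using elementary properties of $\Gamma$ on the restricted parameter range $\theta \in (-\alpha, 0]$, and then combine.

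First, I would show $\Gamma(\theta+1)/\Gamma(\theta/\alpha+1) \leq 1$. For $\theta \leq 0$ and $\alpha \in (0,1)$ one has $\theta(1/\alpha - 1) \leq 0$, hence $\theta/\alpha \leq \theta \leq 0$; both arguments $\theta/\alpha + 1$ and $\theta + 1$ lie in $(0,1]$ with $\theta/\alpha + 1 \leq \theta + 1$. Since the digamma function is negative on $(0,1)$, the derivative $\Gamma' = \Gamma\psi$ is negative on $(0,1]$, so $\Gamma$ is strictly decreasing there and the desired inequality follows.

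Next, I would show $\Gamma(\theta/\alpha + K_n) \leq \Gamma(K_n)$. Since $\theta/\alpha \in (-1,0]$, the argument lies in $(K_n - 1, K_n]$. For $K_n \geq 3$ this interval is contained in $[2,\infty)$, on which $\Gamma$ is increasing; for $K_n = 2$ the interval $(1,2]$ satisfies $\Gamma \leq 1 = \Gamma(2)$ throughout. This is precisely where the hypothesis $K_n \geq 2$ enters. Finally, I would show $\Gamma(\theta + n) \geq \Gamma(n)/n$ by appealing to the log-convexity of $\Gamma$: the tangent-line inequality gives $\log \Gamma(\theta + n) \geq \log \Gamma(n) + \theta\,\psi(n)$, and combining with the standard bounds $0 \leq \psi(n) \leq \log n$ (for $n\geq 2$) and $\theta \geq -\alpha \geq -1$ yields $\log \Gamma(\theta + n) \geq \log \Gamma(n) - \log n$.

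Assembling the three bounds produces
\[
\frac{L_n(\alpha,\theta)}{L_n(\alpha,0)} \leq \frac{1}{\alpha} \cdot 1 \cdot \frac{n\,\Gamma(K_n)}{\Gamma(n)} \leq \frac{n\,\Gamma(K_n)}{\alpha^2\,\Gamma(n)},
\]
where the last step uses $1/\alpha \leq 1/\alpha^2$ for $\alpha \in (0,1)$, proving the claim with $C = 1$. There is no serious obstacle; the only mild subtlety is that when $n = 2$ the interval $(n - \alpha, n]$ straddles the minimum of $\Gamma$ inside $(1,2)$, so the lower bound on $\Gamma(\theta + n)$ cannot be obtained by pure monotonicity, which is what makes log-convexity the cleanest tool.
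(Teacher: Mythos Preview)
Your proof is correct. Both you and the paper start from the same factorization \eqref{eq:j29-mj0s-4ya} and bound the three $\Gamma$-ratios separately, but the tools differ. The paper bounds $\Gamma(\theta+1)/\Gamma(\theta/\alpha+1)$ via the crude estimate $z\Gamma(z)\asymp 1$ on $(0,1]$, which costs an extra factor $1/\alpha$, and then handles both $\Gamma(\theta/\alpha+K_n)/\Gamma(K_n)$ and $\Gamma(n)/\Gamma(\theta+n)$ with Gautschi's inequality, arriving at $(K_n-1)^{\theta/\alpha}n^{-\theta}\leq n$. Your route is more elementary: monotonicity of $\Gamma$ on $(0,1]$ gives the first ratio $\leq 1$ directly (saving the $1/\alpha$), monotonicity on $[2,\infty)$ plus the explicit check at $K_n=2$ handles the numerator, and log-convexity with the bound $0\leq\psi(n)\leq\log n$ handles the denominator. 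Your argument in fact yields the sharper bound $L_n(\alpha,\theta)\leq \frac{n\Gamma(K_n)}{\alpha\,\Gamma(n)}L_n(\alpha,0)$ with $C=1$, which you then relax to $1/\alpha^2$ to match the stated form; the paper genuinely needs the $1/\alpha^2$ because its first estimate is looser.
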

\begin{proof}
  We note that there exist constants $A,B> 0$ such that for all $0 < z \leq 1$ it
  holds $A \leq z \Gamma(z) \leq B$. Hence, we can find a generic constant $C > 0$ such
  that
  \begin{align*}
    \frac{\Gamma(\theta+1)}{\Gamma(\theta/\alpha + 1)}%
    &\leq C \frac{\theta/\alpha + 1}{\theta+1}%
      = \frac{C}{\alpha} \cdot \frac{\theta + \alpha}{\theta+1}%
      \leq \frac{C}{\alpha}.
  \end{align*}
  Then, by Gautschi's inequality, we can bound,
  \begin{align*}
    \frac{\Gamma(\theta/\alpha + K_n)}{\Gamma(\theta+n)}%
    &= \frac{\Gamma(K_n)}{\Gamma(n)} \cdot \frac{\Gamma(\theta/\alpha + K_n)}{\Gamma(K_n)} \cdot \frac{\Gamma(n)}{\Gamma(n+\theta)}\\
    &\leq \frac{\Gamma(K_n)}{\Gamma(n)} \cdot (K_n-1)^{\theta/\alpha} \cdot n^{-\theta}\\
    &\leq \frac{n \Gamma(K_n)}{\Gamma(n)}.
  \end{align*}
\end{proof}

\section{Proof of Theorem~\ref{thm:4sc-jzwh-mc1}}
\label{sec:f8u-2dds-0h7}

In the whole proof we let $Z_{n,\alpha} = \hat{V}_n^{1/2}(\alpha - \hat{\alpha}_n)$ for
simplicity. We define $A_n = \{x \in \Reals \,:\, x^2 \leq C \log(n) \}$ and
$B_n = \{x \in \Reals\,:\, r_n^{-1} \leq x \leq r_n \}$, with $C>0$ a constant and
$(r_n)$ a slowly increasing sequence to be chosen accordingly later. We also let
$C_n = A_n\times B_n$. Then, we decompose
\begin{align*}
  \sup_{A,B}\big|\Pi( Z_{n,\alpha} \in A,\, \gamma \in B \mid \mathbf{X}_n )%
  - \phi(A)H_{*}(B)
  \big|%
  &\leq R_1 + R_2 + R_3,
\end{align*}
with
\begin{align*}
  R_1%
  &= \sup_{A,B}\Big|%
    \frac{\Pi( Z_{n,\alpha} \in A \cap A_n,\, \gamma \in B \cap B_n \mid \mathbf{X}_n )}%
    {\Pi( Z_{n,\alpha} \in A_n,\, \gamma \in B_n \mid \mathbf{X}_n )}%
    - \frac{\phi(A \cap A_n)H_{*}(B \cap B_n)}{\phi(A_n)H_{*}(B_n)} \Big|,\\
  R_2%
  &= \sup_{A,B}\Big|%
    \frac{\Pi( Z_{n,\alpha} \in A \cap A_n,\, \gamma \in B \cap B_n \mid \mathbf{X}_n )}%
    {\Pi( Z_{n,\alpha} \in A_n,\, \gamma \in B_n \mid \mathbf{X}_n )}%
    - \Pi( Z_{n,\alpha} \in A,\, \gamma \in B \mid \mathbf{X}_n )
    \Big|,\\
  R_3%
  &= \sup_{A,B}\Big| \frac{\phi(A \cap A_n)H_{*}(B \cap B_n)}{\phi(A_n)H_{*}(B_n)}%
    - \phi(A)H_{*}(B)\Big|.
\end{align*}
We bound each of the terms $R_1$, $R_2$ and $R_3$ in the following subsections.

\subsubsection*{Bound on $R_1$.}

We let for simplicity
$\tilde{\Pi}_n(A,B) = \frac{\Pi( Z_{n,\alpha} \in A \cap A_n,\, \gamma \in B \cap B_n \mid \mathbf{X}_n )}{\Pi( Z_{n,\alpha} \in A_n,\, \gamma \in B_n \mid \mathbf{X}_n )}$
and we write $h_{*}$ the density of $H_{*}$, $\tilde{\pi}_n$ the density
of $\tilde{\Pi}_n$, and $f$ the density of $\phi$. Then,
\begin{align*}
  R_1%
  &= \sup_{A,B}\Big|\int_{A\cap A_n}\int_{B\cap B_n}\Big(1 - \frac{f(x)h_{*}(y)}{\phi(A_n)H_{*}(B_n)\tilde{\pi}_n(x,y)} \Big)\tilde{\pi}_n(x,y)\intd x\intd y\Big|\\
  &\leq \int_{C_n}\Big|1 - \frac{f(x)h_{*}(y)}{\phi(A_n)H_{*}(B_n)\tilde{\pi}_n(x,y)} \Big| \tilde{\pi}_n(x,y)\intd x \intd y,
\end{align*}
Introducing the shorthand notation $\hat{x}_n \equiv \hat{V}_n^{-1/2}x$ to
ease the reading, we have by Bayes' rule that
\begin{equation*}
  \tilde{\pi}_n(x,y)%
  = \frac{e^{\mathcal{L}_n(x,y)}g_{\alpha}(\hat{\alpha}_n + \hat{x}_n)g_{\gamma}(y)\1_{A_n}(x)\1_{B_n}(y)}{\int_{C_n} e^{\mathcal{L}_n(x',y')}g_{\alpha}(\hat{\alpha}_n + \hat{x}_n')g_{\gamma}(y')\intd x' \intd y' }
\end{equation*}
with
$\mathcal{L}_n(x,y) = \ell_n(\hat{\alpha}_n + \hat{x}_n, y - \hat{\alpha}_n - \hat{x}_n)$.
Here we note that on an event of probability $1 + o(1)$ we always have
$\hat{\alpha}_n + \hat{x}_n \in (0,1)$ so $\tilde{\pi}_n(x,y)$ is well-defined on this
event. We assume without loss of generality that $\tilde{\pi}_n$ is always
well-defined. Then,
\begin{align*}
  R_1%
  &\leq \int_{C_n}\Big|1 - \int_{C_n} \frac{f(x)h_{*}(y)e^{\mathcal{L}_n(x',y')} g_{\alpha}(\hat{\alpha}_n + \hat{x}_n')g_{\gamma}(y')\intd x' \intd y'}{\phi(A_n)H_{*}(B_n)e^{\mathcal{L}_n(x,y)} g_{\alpha}(\hat{\alpha}_n + \hat{x}_n)g_{\gamma}(y) } \Big| \tilde{\pi}_n(x,y)\intd x \intd y\\
  &=\int_{C_n}\Big|1 - \int_{C_n} \frac{f(x)h_{*}(y)e^{\mathcal{L}_n(x',y')} g_{\alpha}(\hat{\alpha}_n + \hat{x}_n')g_{\gamma}(y')}{f(x')h_{*}(y')e^{\mathcal{L}_n(x,y)} g_{\alpha}(\hat{\alpha}_n + \hat{x}_n)g_{\gamma}(y) } \frac{f(x')h_{*}(y')\intd x' \intd y'}{\phi(A_n)H_{*}(B_n)} \Big| \tilde{\pi}_n(x,y)\intd x \intd y\\
  &\leq \int_{C_n^2}\Big|1 - \frac{f(x)h_{*}(y)e^{\mathcal{L}_n(x',y')} g_{\alpha}(\hat{\alpha}_n + \hat{x}_n')g_{\gamma}(y')}{f(x')h_{*}(y')e^{\mathcal{L}_n(x,y)} g_{\alpha}(\hat{\alpha}_n + \hat{x}_n)g_{\gamma}(y) }\Big| \frac{f(x')h_{*}(y')\intd x' \intd y'}{\phi(A_n)H_{*}(B_n)}\tilde{\pi}_n(x,y)\intd x \intd y\\
  &\leq \sup_{\substack{x,x'\in A_n\\y,y'\in B_n}}\Big|1 - \frac{f(x)h_{*}(y)e^{\mathcal{L}_n(x',y')} g_{\alpha}(\hat{\alpha}_n + \hat{x}_n')g_{\gamma}(y')}{f(x')h_{*}(y')e^{\mathcal{L}_n(x,y)} g_{\alpha}(\hat{\alpha}_n + \hat{x}_n)g_{\gamma}(y) }\Big|.
\end{align*}
Let define
$\mathcal{H}(y) = \log \Gamma(1 - \alpha_{*} + y) - \log \Gamma(y/\alpha_{*}) + \frac{y}{\alpha_{*}}\log[L \Gamma(1-\alpha_{*})]$,
so that $h_{*}(y) \propto e^{\mathcal{H}(y)}g_{\gamma}(y)$. Since $f(x) \propto e^{-x^2/2}$, we
obtain from the last display that
\begin{equation*}
  R_1%
  \leq \sup_{\substack{x,x'\in A_n\\y,y'\in B_n}}\Big|1 - \frac{e^{-\frac{1}{2}x^2 -\mathcal{L}_n(x,y) + \mathcal{H}(y)} g_{\alpha}(\hat{\alpha}_n + \hat{x}_n')}{e^{- \frac{1}{2}(x')^2 -\mathcal{L}_n(x',y') + \mathcal{H}(y')} g_{\alpha}(\hat{\alpha}_n + \hat{x}_n) }\Big|
\end{equation*}
Since $\sup_{x\in A_n}\hat{x}_n = o_p(1)$, since
$\hat{\alpha}_n = \alpha_{*} + o_p(1)$, and since $g_{\alpha}$ has a continuous and positive density
in a neighborhood of $\alpha_{*}$, it is enough to establish that there exists a
sequence $(E_n)$ [not depending on $x$ or $y$] such that
\begin{equation}
  \label{eq:gqe-u2qn-08y}
  \sup_{\substack{x\in A_n\\y\in B_n}}\Big|\frac{x^2}{2} + \mathcal{L}_n(x,y) - \mathcal{H}(y) + E_n\Big| = o_p(1)
\end{equation}
to obtain that $R_1 = o_p(1)$. We dedicate the rest of this section to prove
that equation~\eqref{eq:gqe-u2qn-08y} holds true. First, we note that by
Stirling's formula [see also the computations above the
equation~\eqref{eq:12n-q8jt-x2q}]
\begin{align*}
  \sup_{\substack{x\in A_n\\y\in B_n}}\Big| \mathcal{L}_n(x,y) %
  - \Psi_n(\hat{\alpha}_n + \hat{x}_n,y - \hat{\alpha}_n - \hat{x}_n)
    - \log \frac{\Gamma(K_n)}{\Gamma(n)}%
  + \log(\hat{\alpha}_n) - \ell_n(\hat{\alpha}_n + \hat{x}_n,0)\Big|
  &= o_p(1).
\end{align*}
Also,
\begin{align*}
  \Psi_n(\hat{\alpha}_n + \hat{x}_n, y - \hat{\alpha}_n - \hat{x}_n) - \mathcal{H}(y)%
  &= \log \Gamma(y - \hat{\alpha}_n - \hat{x}_n + 1) - \log \Gamma(y - \alpha_{*} + 1)\\
  &\quad - \log \Gamma \Big(\frac{y}{\hat{\alpha}_n + \hat{x}_n} \Big)%
    + \log \Gamma\Big(\frac{y}{\alpha_{*}} \Big)\\
  &\quad + \frac{y - \hat{\alpha}_n - \hat{x}_n}{\hat{\alpha}_n + \hat{x}_n}\log \frac{K_n}{n^{\hat{\alpha}_n + \hat{x}_n}}%
    - \frac{y}{\alpha_{*}}\log[L\Gamma(1-\alpha_{*})].
\end{align*}
Since $\sup_{x\in A_n}\log(n)\hat{x}_{n} = o_p(1)$,
$\log(n)|\hat{\alpha}_n - \alpha_{*}| = o_p(1)$ by Proposition~\ref{prp:9cr-dlcr-2yc}, and
$K_n/n^{\alpha_{*}} = L\Gamma(1-\alpha_{*}) + o_p(1)$ [see for instance the proof of
Proposition~\ref{prp:9cr-dlcr-2yc}], if the sequence $(r_n)$ increases slowly
enough, it must be that
\begin{align*}
  \sup_{\substack{x \in A_n\\y\in B_n}}\Big|\frac{y}{\hat{\alpha}_n + \hat{x}_n}\log \frac{K_n}{n^{\hat{\alpha}_n + \hat{x}_n}} - \frac{y}{\alpha_{*}}\log[L\Gamma(1-\alpha_{*})] \Big| = o_p(1).
\end{align*}
Next, $\hat{\alpha}_n + \hat{x}_n$ are in a small neighborhood of $\alpha_{*}$ with
probability $1 + o(1)$ and $y - \alpha_{*}$ is bounded away from $1$ for all $y > 0$.
Hence, by a suitable Taylor expansion
\begin{multline*}
 \sup_{\substack{x \in A_n\\y\in B_n} }\Big|%
  \log \Gamma(y - \hat{\alpha}_n - \hat{x}_n + 1) - \log \Gamma(y - \alpha_{*} + 1)%
  \Big|\\%
  \leq \sup_{\substack{x \in A_n\\y\in B_n} }\sup_{|\delta| \leq |\hat{\alpha}_n + \hat{x}_n - \alpha_{*}}\Big|%
  \psi(y - \alpha_{*} +1 + \delta)(\hat{\alpha}_n + \hat{x}_n - \alpha_{*}) \Big| = o_p(1)
\end{multline*}
because $y - \alpha_{*} + 1 + \delta$ remains bounded away from zero and is never more
than $\lesssim r_n$, so the last display is indeed a $o_p(1)$ is the sequence $(r_n)$
grows slowly enough [recall $\psi(z) \sim \log(z)$ as $z \to \infty$ and $\psi(z) \sim -1/z$ as
$z \to 0$]. With a similar reasoning, there is $u$ between
$y/(\hat{\alpha}_n + \hat{x}_n)$ and $y/\alpha_{*}$ such that
\begin{align*}
  \log \Gamma\Big(\frac{y}{\hat{\alpha}_n + \hat{x}_n} \Big)%
  - \log \Gamma\Big(\frac{y}{\alpha_{*}}\Big)%
  &= \psi(u)\Big[\frac{y}{\hat{\alpha}_n + \hat{x}_n} - \frac{y}{\alpha_{*}}  \Big]\\
  &= y \psi(u)\frac{\alpha_{*} - (\hat{\alpha}_n - \hat{x}_n)}{\alpha_{*}(\hat{\alpha}_n + \hat{x}_n)}
\end{align*}
and thus provided $(r_n)$ grows slowly enough
\begin{equation*}
  \sup_{\substack{x\in A_n\\y\in B_n}}\Big|\log \Gamma\Big(\frac{y}{\hat{\alpha}_n + \hat{x}_n} \Big)%
  - \log \Gamma\Big(\frac{y}{\alpha_{*}}\Big)\Big|%
  = o_p(1).
\end{equation*}
Finally, $\hat{\alpha}_n^0$ is the maximizer of $\ell_n(\cdot, 0)$, so there is $\bar{\alpha}$ in a
neighborhood of $\hat{\alpha}_n^0$ [hence in a neighborhood of $\hat{\alpha}_n$ and
$\alpha_{*}$ too] such that
\begin{align*}
  \ell_n(\hat{\alpha}_n + \hat{x}_n,0)%
  &= \ell_n(\hat{\alpha}_n^0,0)%
    + \frac{1}{2}\partial_{\alpha}^2\ell_n(\hat{\alpha}_n^0,0)\big(\hat{\alpha}_n - \hat{\alpha}_n^0 + \hat{x}_n\big)^2%
    + \frac{1}{6}\partial_{\alpha}^3\ell_n(\bar{\alpha},0)\big(\hat{\alpha}_n - \hat{\alpha}_n^0 + \hat{x}_n\big)^3.
\end{align*}
Thus, by Proposition~\ref{prp:9cr-dlcr-2yc}, by definition of $\hat{V}_n$, and
because $\partial_{\alpha}^2\ell_n(\hat{\alpha}_n^0,0) = O_p(n^{\alpha_{*}})$ and
$\partial_{\alpha}^3\ell_n(\hat{\alpha}_n^0,0) = O_p(n^{\alpha_{*}})$ (this follows by direct
computations, see for instance in \cite{FN(21)}), it
follows
\begin{equation*}
  \sup_{\substack{x\in A_n\\y\in B_n}}\Big|\ell_n(\hat{\alpha}_n + \hat{x}_n,0) - \ell_n(\hat{\alpha}_n^0,0)%
  + \frac{x^2}{2}\Big| = o_p(1).
\end{equation*}
Combining all the estimates, we have shown that if $(r_n)$ grows slowly enough,
then equation~\eqref{eq:gqe-u2qn-08y} is proved with
\begin{equation*}
  E_n = \log(\hat{\alpha}_n) - \ell_n(\hat{\alpha}_n^0,0) - \log \frac{\Gamma(K_n)}{\Gamma(n)} + \log \frac{K_n}{n^{\hat{\alpha}_n}}.
\end{equation*}

\subsubsection*{Bound on $R_2$.}

To prove that $R_2 = o_p(1)$ it is enough to show that
$\Pi(Z_{n,a} \notin A_n \mid \mathbf{X}_n) = o_p(1)$ and $\Pi(\gamma \notin B_n \mid \mathbf{X}_n) = o_p(1)$. We have to be
careful enough to ensure that these results hold for any slowly increasing
sequence $(r_n)$. First, following the same
steps as in the proofs of Proposition~\ref{pro:x8d-d96y-u1g} and
\ref{pro:5i7-gs99-awg}, for every $c > 0$ we can choose $C> 0$ such that
\begin{equation*}
  \sup_{\substack{\hat{V}_n(\alpha - \hat{\alpha}_n)^2 > C\log(n)\\\gamma > 0}}\{\ell_n(\alpha,\gamma - \alpha) - \sup \ell_n\}%
  \leq - c\log(n).
\end{equation*}
We deduce that (with $\hat{\gamma}_n = \hat{\theta}_n + \hat{\alpha}_n$)
\begin{align*}
  \Pi\big( Z_{n,\alpha} \notin A_n \mid \mathbf{X}_n  )%
  &= \frac{\iint \1_{\{\hat{V}_n(\alpha - \hat{\alpha}_n)^2 > C \log(n)\} } e^{\ell_n(\alpha,\gamma -\alpha)  - \sup \ell_n}G_{\alpha}(\intd \alpha) G_{\gamma}(\intd \gamma)}{\iint e^{\ell_n(\alpha,\gamma -\alpha) - \sup \ell_n}G_{\alpha}(\intd \alpha) G_{\gamma}(\intd \gamma)}\\
  &\leq n^{-c} \Big( \iint \1_{\hat{V}_n(\alpha - \hat{\alpha}_n)^2 \leq 1,\, |\gamma - \hat{\gamma}_n| \leq 1\} } e^{\ell_n(\alpha,\gamma-\alpha) - \sup \ell_n}G_{\alpha}(\intd \alpha) G_{\gamma}(\intd \gamma)\Big)^{-1}\\
  &\lesssim \frac{n^{-c}}{G_{\alpha}(\hat{V}_n(\alpha -\hat{\alpha}_n)^2 \leq 1  )G_{\gamma}((\gamma - \hat{\gamma}_n)^2 \leq 1)},
\end{align*}
where the last line follows by a Taylor expansion of $\ell_n$ near its maximizer. Next, $G_{\alpha}$ has a positive density in a
neighborhood of $\alpha_{*}$ and $\hat{\alpha}_n = \alpha_{*} + o_p(1)$ by
Proposition~\ref{prp:9cr-dlcr-2yc}; and similarly $G_{\gamma}$ has a positive density in
a neighborhood of $\gamma_{*} = \theta_{*} + \alpha_{*}$ and $\hat{\gamma}_n = \gamma_{*} + o_p(1)$.
By standard arguments and by taking $c$ sufficiently large, we find that
\begin{equation*}
  \Pi( Z_{n,\alpha} \notin A_n \mid \mathbf{X}_n) = o_p(1).
\end{equation*}

We now prove that $\Pi(\gamma \notin B_n \mid \mathbf{X}_n) = o_p(1)$. In fact, in view of the previous,
it is enough to consider $\Pi(Z_{n,\alpha}\in A_n,\, \gamma \notin B_n \mid \mathbf{X}_n)$. We start with a
rough concentration result. We let
$D_n = \{\gamma > 0\,:\, n^{-a} \leq \gamma \leq a \log(n) \}$ for an arbitrarily large constant
$a > 0$. Then, in view of Proposition~\ref{pro:x8d-d96y-u1g}, for every
$c > 0$ we can choose $a$ such that
\begin{equation*}
  \sup_{\substack{Z_{n,\alpha}\in A_n\\ \gamma \notin D_n}}\{\ell_n(\alpha,\gamma - \alpha) - \sup \ell_n\} \leq -c \log(n).
\end{equation*}
Hence, with the same reasoning as before, we deduce that
$\Pi(Z_{n,\alpha} \in A_n, \gamma \notin D_n \mid \mathbf{X}_n) = o_p(1)$. It is now enough to consider
$\Pi(Z_{n,\alpha} \in A_n, \gamma \in B_n^c \cap D_n \mid \mathbf{X}_n) $. Consider the function
$f_{\gamma}(\alpha) = \ell_n(\alpha,\gamma - \alpha) + \log(\alpha) - \ell_n(\alpha,0)$. By a Taylor expansion, for all
$\alpha \in (0,1)$ there exists $\bar{\alpha}$ between $\alpha$ and $\hat{\alpha}_n^0$ such that for
all $\gamma > 0$
\begin{align*}
  f_{\gamma}(\alpha) - f_{\gamma}(\hat{\alpha}_n^0)%
  &= f_{\gamma}'(\bar{\alpha})(\alpha - \hat{\alpha}_n^0)\\
  &= \frac{f_{\gamma}'(\bar{\alpha}) Z_{n,\alpha}}{\hat{V}_n^{1/2}}%
    + f_{\gamma}(\bar{\alpha})(\hat{\alpha}_n - \hat{\alpha}_n^0).
\end{align*}
Observe that,
\begin{align*}
  f_{\gamma}'(\alpha)%
  &= -\psi(\gamma + 1 - \alpha) + \frac{\gamma \psi(\gamma/\alpha)}{\alpha^2}%
    - \frac{\gamma \psi(\gamma/\alpha + K_n-1)}{\alpha^2}%
    + \psi(\gamma - \alpha + n).
\end{align*}
By classical properties of the digamma function $\gamma \psi(\gamma/\alpha) \to \alpha$ as $\gamma \to 0$ and
$\psi(z) \sim \log(z)$ as $z\to \infty$ \citep{abramo}. Furthermore
$\hat{\alpha}_n - \hat{\alpha}_n^0 = O_p[n^{-\alpha_{*}}\log(n)]$ by
Proposition~\ref{prp:9cr-dlcr-2yc}, so we deduce that
\begin{align*}
  \sup_{\substack{Z_{n,\alpha}\in A_n\\\gamma \in D_n}}|f_{\gamma}(\alpha) - f_{\gamma}(\hat{\alpha}_n^0)| = o_p(1).
\end{align*}
Since $\log(\alpha/\hat{\alpha}_n^0) = o_p(1)$ uniformly over $Z_{n,\alpha}\in A_n$, we have
shown that
\begin{align*}
  \sup_{\substack{Z_{n,\alpha}\in A_n\\\gamma \in D_n}}|\ell_n(\alpha,\gamma-\alpha)  - \ell_n(\hat{\alpha}_n^0,\gamma - \hat{\alpha}_n^0)
  - \{\ell_n(\alpha,0) - \ell_n(\hat{\alpha}_n^0,0)\} | = o_p(1).
\end{align*}
It follows that
\begin{align*}
  &\Pi(Z_{n,\alpha} \in A_n, \gamma \in B_n^c \cap D_n \mid \mathbf{X}_n)\\%
  &\qquad= \frac{\iint \1_{A_n}(Z_{n,\alpha})\1_{B_n^c\cap D_n}(\gamma)e^{\ell_n(\alpha,\gamma- \alpha)}G_{\alpha}(\intd \alpha)G_{\gamma}(\intd \gamma) }{\iint e^{\ell_n(\alpha,\gamma- \alpha)}G_{\alpha}(\intd \alpha)G_{\gamma}(\intd \gamma)}\\
  &\qquad\leq \frac{\iint \1_{A_n}(Z_{n,\alpha})\1_{B_n^c\cap D_n}(\gamma)e^{\ell_n(\alpha,\gamma- \alpha)}G_{\alpha}(\intd \alpha)G_{\gamma}(\intd \gamma) }{\iint \1_{A_n}(Z_{n,\alpha})\1_{D_n}(\gamma)e^{\ell_n(\alpha,\gamma- \alpha)}G_{\alpha}(\intd \alpha)G_{\gamma}(\intd \gamma)}\\
  &\qquad= \frac{(1+o_p(1))%
    \int_{\{Z_{n,\alpha}\in A_n\} }e^{\ell_n(\alpha,0) - \ell_n(\hat{\alpha}_n^0,0)}G_{\alpha}(\intd \alpha)\int_{B_n^c \cap D_n}
    e^{\ell_n(\hat{\alpha}_n^0,\gamma- \hat{\alpha}_n^0)}G_{\gamma}(\intd \gamma) }{\int_{\{Z_{n,\alpha}\in A_n\} }e^{\ell_n(\alpha,0) - \ell_n(\hat{\alpha}_n^0,0)}G_{\alpha}(\intd \alpha)\int_{D_n}
    e^{\ell_n(\hat{\alpha}_n^0,\gamma- \hat{\alpha}_n^0)}G_{\gamma}(\intd \gamma)}\\
  &\qquad= (1 + o_p(1))\frac{\int_{B_n^c \cap D_n}
    e^{\ell_n(\hat{\alpha}_n^0,\gamma- \hat{\alpha}_n^0)}G_{\gamma}(\intd \gamma) }{\int_{D_n}
    e^{\ell_n(\hat{\alpha}_n^0,\gamma- \hat{\alpha}_n^0)}G_{\gamma}(\intd \gamma)}.
\end{align*}
Thus,
\begin{align*}
  \Pi(Z_{n,\alpha} \in A_n, \gamma \in B_n^c \cap D_n \mid \mathbf{X}_n)%
  &\leq (1+o_p(1)) \frac{\int_{B_n^c \cap D_n}
    e^{\ell_n(\hat{\alpha}_n^0,\gamma- \hat{\alpha}_n^0) - \sup \ell_n}G_{\gamma}(\intd \gamma) }{\int_{|\gamma - \hat{\gamma}_n| \leq 1}
    e^{\ell_n(\hat{\alpha}_n^0,\gamma- \hat{\alpha}_n^0)-\sup \ell_n}G_{\gamma}(\intd \gamma)}
\end{align*}
By a Taylor expansion of $\ell_n$ near its maximizer, it can be seen that the denominator in the
previous expression is always greater than a constant with probability
$1 + o(1)$, while the numerator goes to zero because by the same arguments that
we used for instance in the proof of Proposition~\ref{pro:5i7-gs99-awg}. In
summary, we have shown that for every sequence $(r_n)$ going to infinity
\begin{align*}
  \Pi(\gamma \notin B_n \mid \mathbf{X}_n) = o_p(1).
\end{align*}

\subsubsection*{Bound on $R_3$.}

It is enough to show that $\phi(A_n^c) = o_p(1)$ and $H_{*}(B_n^c) = o_p(1)$. Both
facts are immediate.

\section{An Auxiliary result about the Pitman-Yor process}
\label{sec:lil-pyp}

Here we establish a law of the iterated logarithm for the Pitman-Yor process. We use this result to deduce Theorems~\ref{thm:wellspecified:mmle} and~\ref{thm:consistency-wellspecified-bayes} from Theorems~\ref{thm:misspecified:mle} and~\ref{thm:4sc-jzwh-mc1}, but we believe the result is of independent interest.

\begin{prp}
  \label{prp:pypnice}
  Suppose
  $P \sim \mathrm{PYP}(\alpha,\theta)$ with $\alpha \in (0,1)$ and
  $\theta > -\alpha$. Let $\bar{\rho}(t) = \frac{1}{\Gamma(1-\alpha)t^{\alpha}}$ and $f_{\alpha}$ denote the density of the $\alpha$-stable distribution on $\mathbb{R}_+$. Then there exists a random variable $T$ having density proportional to $t^{-\theta}f_{\alpha}(t)$ and such that
  \begin{equation*}
    \limsup_{x\to 0} \frac{\big|\bar{F}_P(x) - \bar{\rho}(T)x^{-\alpha} \big|}{\sqrt{2\bar{\rho}(Tx)\log(\log(\bar{\rho}(Tx))) }}%
    = 1\qquad \mathrm{a.s}.
  \end{equation*}
\end{prp}
\begin{proof}
  We recall for the next that $P \sim \mathrm{PYP}(\alpha,\theta)$
  is equal in distribution to $\sum_{j\geq 1}V_j\delta_{S_j}$ where
  $V_1,V_2,\dots$ has a $\mathrm{PD}(\alpha,\theta)$ 
  distribution (Poisson-Dirichlet) and $S_1,S_2,\dots$ is a sequence of iid symbols.
  
  We first establish the result for $P\sim \mathrm{PYP}(\alpha,0)$. Let $\Gamma_1 \leq \Gamma_2 \leq \dots$ be the arrival times of a unit rate Poisson process on $\mathbb{R}_+$ and let $T = \sum_{k\geq 1}\bar{\rho}^{-1}(\Gamma_k)$. Also let $\QQ_{\alpha}$ be the joint law of $(T,\Gamma_1,\Gamma_2,\dots)$. Classical results \citep{Fer(72),Pit(97)} give that $T^{-1}\bar{\rho}^{-1}(\Gamma_1),T^{-1}\bar{\rho}^{-1}(\Gamma_2),\dots$ has a $\mathrm{PD}(\alpha,0)$ distribution under $\QQ_{\alpha}$. Thus $\bar{F}_P(x) \overset{d}{=} \sum_{k\geq 1}I\big(\bar{\rho}^{-1}(\Gamma_k) > Tx \big)$, and
  \begin{align*}
    \limsup_{x\to 0} \frac{\big|\bar{F}_P(x) - \bar{\rho}(T)x^{-\alpha} \big|}{\sqrt{2\bar{\rho}(Tx)\log(\log(\bar{\rho}(Tx))) }}%
    &\overset{d}{=}\limsup_{x\to 0}\frac{\big|\sum_{k\geq 1}I\big(\Gamma_k < \bar{\rho}(Tx) \big) - \bar{\rho}(Tx) \big|}{\sqrt{2\bar{\rho}(Tx)\log(\log(\bar{\rho}(Tx))) }}\\
    &= \limsup_{y\to \infty} \frac{\big|\sum_{k\geq 1}I\big(\Gamma_k < y \big) - y \big|}{\sqrt{2y\log(\log(y)) }}
  \end{align*}
  Then, see that $\sum_{k\geq 1}I(\Gamma_k < y) \overset{as}{=} 1 + \nu(y)$ with $\nu(y) = \min\{n \;:\; \Gamma_n > y\}$. Since $\Gamma_n \overset{d}{=} \xi_1 + \dots + \xi_n$ with $\xi_1,\xi_2,\dots \overset{iid}{\sim} \mathrm{Exp}(1)$, the conclusion follows by \cite[Theorem~11.1 in Section~3.11]{Gut(10)}.

  Now we establish the result for $\theta > -\alpha$ but not necessarily $\theta = 0$. In this case $\bar{F}_P$ can be constructed as above from $(T,\Gamma_1,\Gamma_2,\dots)$ \citep{Pit(97)}. This time, however, $(T,\Gamma_1,\Gamma_2,\dots)$ has the law $\QQ_{\alpha,\theta}$ such that $\QQ_{\alpha,\theta}\circ T^{-1}$ has density $\propto t^{-\theta}$ with respect to $\QQ_{\alpha} \circ T^{-1}$ and $\QQ_{\alpha,\theta}((\Gamma_1,\Gamma_2,\dots)\in \cdot \mid T) = \QQ_{\alpha}((\Gamma_1,\Gamma_2,\dots)\in \cdot \mid T)$ $\QQ_{\alpha}$-as. Thus, the results above guarantee that
  \begin{equation*}
    \QQ_{\alpha,\theta}\Bigg(\limsup_{x\to 0} \frac{\big|\bar{F}_P(x) - \bar{\rho}(T)x^{-\alpha} \big|}{\sqrt{2\bar{\rho}(Tx)\log(\log(\bar{\rho}(Tx))) }} = 1 \ \Big\vert \ T\Bigg) = 1\qquad \PP_{\alpha,\theta}\textrm{-as}.\qedhere
  \end{equation*}
\end{proof}

\begin{cor}
  \label{cor:pypsmooth}
  If $P \sim \mathrm{PYP}(\alpha,\theta)$ with $\alpha \in (0,1)$ and $\theta > -\alpha$, then there exists a random variable $0 < L < \infty$ such that
  \begin{equation*}
    \lim_{x\to 0} x^{\alpha}\log\Big(\frac{1}{x} \Big)\big|\bar{F}_P(x) - L x^{-\alpha} \big| = 0\quad \mathrm{a.s.}
  \end{equation*}
\end{cor}


\section{Additional details on numerical illustrations}\label{sec:sup:illustration}

In Section~\ref{sec7} of the main manuscript, we provide numerical illustrations to display the performance of the proposed estimators on synthetic and real data. In the following, we provide additional details on the data generating process and further results.

\subsection{Parameter estimation}\label{sec:sup:param_est}

To analyse the recovery of the true parameters $\theta$ and $\alpha$ when the data is generated from a PYP, we compare an Empirical Bayes (EB) approach (which is achieved by finding $(\hat{\theta},\hat{\alpha})$ that maximize the EPPF \eqref{eppf_pit}), and a Full Bayes (FB) approach using several priors distributions on $(\theta,\alpha)$: (a) a non-informative prior for both parameters, with $\theta \sim \textrm{Gamma}(0.1,0.1)$ and $\alpha \sim \textrm{Beta}(1,1)$; (b) an informative prior for both parameters, with $\theta \sim \textrm{Gamma}(10,1)$ and $\alpha \sim \textrm{Beta}(4,1)$; (c) an informative prior for only $\theta$ and non-informative for $\alpha$, with $\theta \sim \textrm{Gamma}(10,1)$ and $\alpha \sim \textrm{Beta}(1,1)$; (d) an informative prior for only $\alpha$ and non-informative for $\theta$, with $\theta \sim \textrm{Gamma}(0.1,0.1)$ and $\alpha \sim \textrm{Beta}(4,1)$. 

The synthetic data is generated with different randomly generated values of $\alpha$ and $\theta$, the former sampled from a $\textrm{Gamma}(1,1)$ distribution and the latter from a $\textrm{Unif}(0,1)$ distribution. Moreover the inference is performed for several increasing values of the sample size ($10^3$, $10^4$, $10^5$ and $10^6$). We consider the absolute percentage or relative error, $l(\gamma_0, \hat{\gamma}) =  \frac{\vert\gamma_0 - \hat{\gamma}\vert}{\gamma_0}$, where $\gamma$ represents either $\alpha$ or $\theta$ and display in Figure~\ref{fig:par_estimation} the mean error across the generated datasets.

\subsection{Inference of SSP with synthetic data}\label{sec:sup:synthetic}

We study the performance of the proposed BNP estimators for the described species sampling problems for synthetic data. We consider a correctly-specified framework, where the data is generated from the PYP model, and a mis-specified framework, where the data generated from a power-law Zipf distribution, where $p(z) = z^{-s}/\zeta(s)$, $\zeta(\cdot)$ is the Riemann zeta function, and the parameter $s>1$ influences the power law behavior and corresponds to $1/\alpha$.

\begin{figure}[t]
\begin{flushleft}
\includegraphics[width=0.95\textwidth]{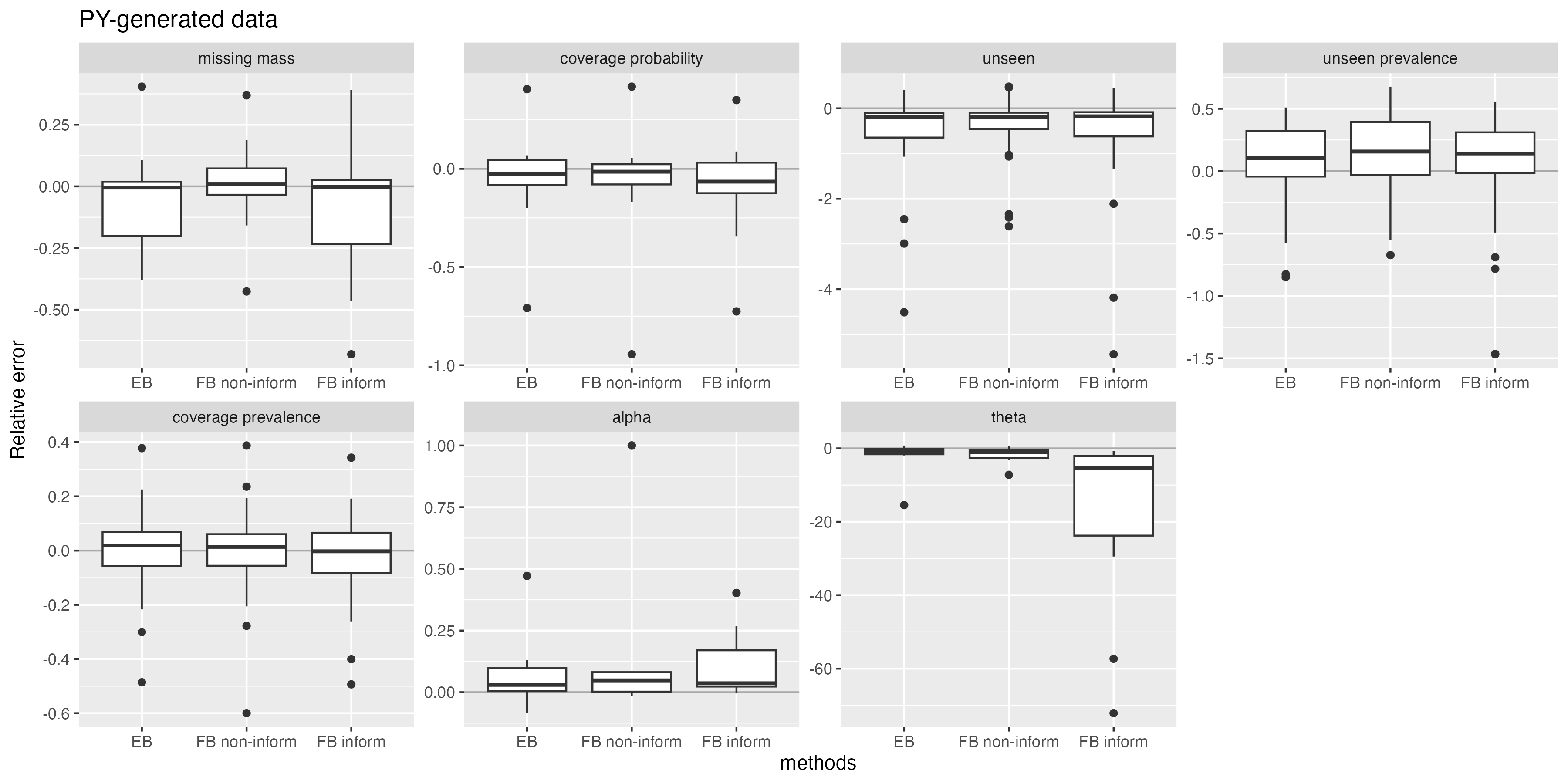}\\
\includegraphics[width=0.95\textwidth]{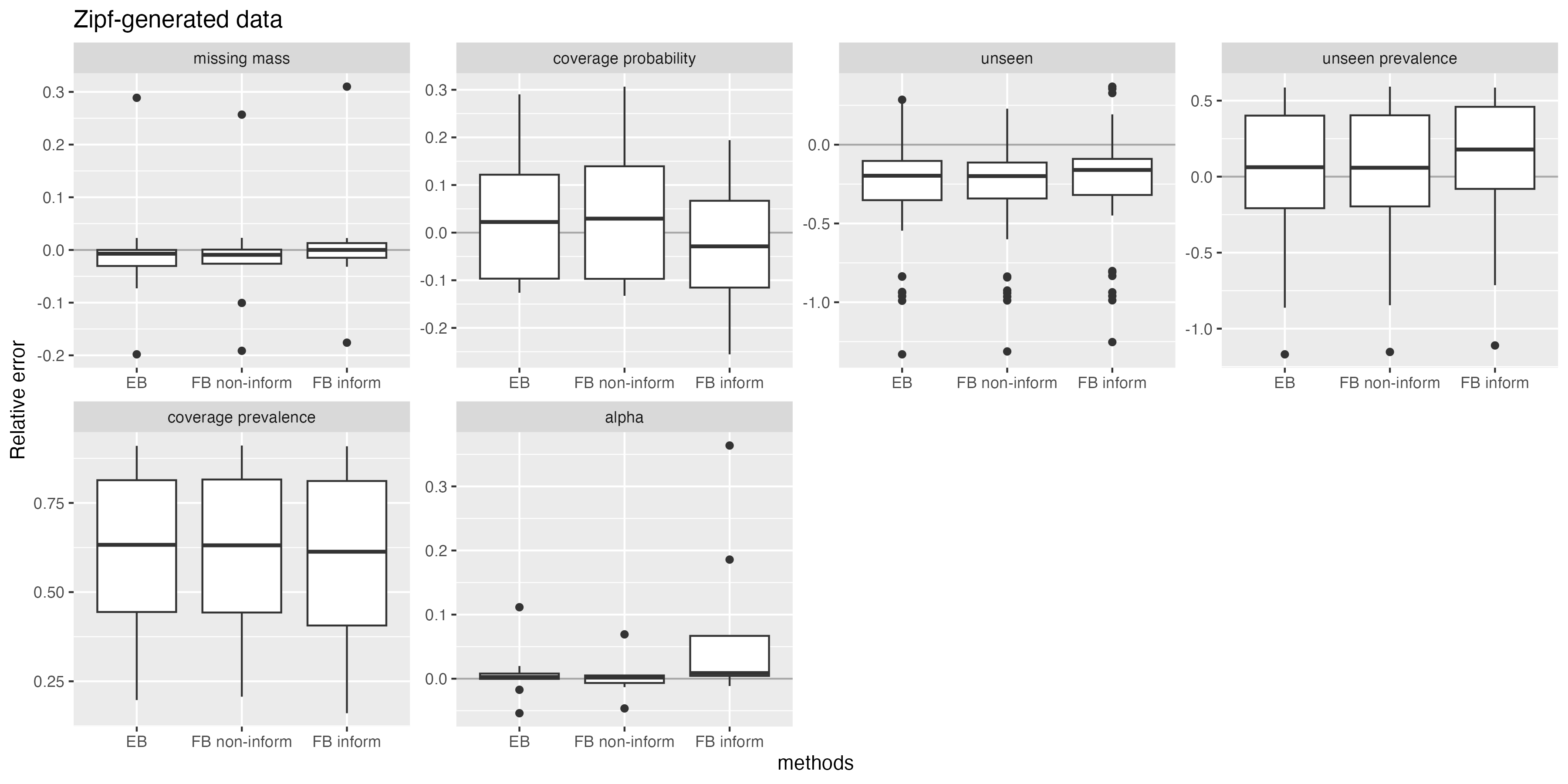}
\end{flushleft}
\caption{Relative (percentage) error for estimation of parameters and several SSP under the Empirical Bayes (EB) approach and the Full Bayes (FB) approach with different prior distributions, when the data is generated from the PYP (top panels) and the Zipf distribution (bottom panels). \label{fig:PYstatic}}
\end{figure}

We focus our analysis on the following functionals: the missing mass, the coverage probability $\mathfrak{p}_{r,n}$ for $r=1$, the number of unseen species, the unseen prevalence $\mathfrak{u}_{r,n,m}$ for $r=1$ and the coverage of prevalence $\mathfrak{f}_{r,n,m}$ for $r=1$. 
In the top panels of Figure~\ref{fig:PYstatic} we display the relative error across several synthetic datasets generated from the PIP. Across most of the datasets, the estimation of all functionals is good for all of the methods, with the worst recovery for the number of unseen, where the percentage error for several of the datasets is greater than $100\%$ in absolute value. 

Similarly, the bottom panels of Figure~\ref{fig:PYstatic} represents the relative error across several synthetic datasets generated from the Zipf distribution. 
Despite the model being mis-specified, most of the functionals are recovered quite accurately, with the median error being close to zero, except for the coverage of prevalence and the number of unseen species. For example, the first and third quartiles of the error for the missing mass are within $3\%$ in absolute value for all methods, and $11\%$ for the coverage probability.


\end{document}